\documentclass[11pt,reqno]{amsart}

\marginparwidth1.5cm

\textwidth165mm
\textheight220mm
\hoffset-20mm
\voffset-1mm

\usepackage{amsmath, amsthm, amssymb}
\usepackage{amsfonts}
\usepackage[ansinew]{inputenc}
\usepackage[dvips]{epsfig}
\usepackage{graphicx}
\usepackage[english]{babel}



\pagestyle{myheadings}

\usepackage{cite}
\usepackage{graphicx}
\usepackage{amscd}
\usepackage{xcolor}
\usepackage{bm}
\usepackage{enumerate}

\usepackage{verbatim}
\usepackage{hyperref}
\usepackage{amstext}
\usepackage{latexsym}

\let\oldsqrt\sqrt
\def\sqrt{\mathpalette\DHLhksqrt}
\def\DHLhksqrt#1#2{%
\setbox0=\hbox{$#1\oldsqrt{#2\,}$}\dimen0=\ht0
\advance\dimen0-0.2\ht0
\setbox2=\hbox{\vrule height\ht0 depth -\dimen0}%
{\box0\lower0.4pt\box2}}

\allowdisplaybreaks

\newcommand{\loglap}{L_{\text{\tiny $\Delta \:$}}\!}

\newcommand{\R}{\mathbb{R}} 
\newcommand{\N}{\mathbb{N}} 

\newcommand{\dist}{\textnormal{dist}} 
\newcommand{\diam}{\textnormal{diam}} 
\newcommand{\osc}{\textnormal{osc}} 

\DeclareMathOperator{\id}{\rm id}

\renewcommand{\phi}{\varphi}

\newcommand{\cE}{{\mathcal E}}
\newcommand{\cF}{{\mathcal F}}

\newcommand{\cH}{{\mathcal H}}

\newcommand{\cR}{{\mathcal R}}

\newcommand{\cV}{{\mathcal V}}

\newcommand{\smhoo}{\!\text{\tiny $\cH^0_0(\Omega)$}}

\newcommand{\eps}{\varepsilon}

\theoremstyle{definition}
\newtheorem{defi}{Definition}[section]
\newtheorem{remark}[defi]{Remark}

\theoremstyle{plain} 
\newtheorem{thm}[defi]{Theorem}
\newtheorem{prop}[defi]{Proposition}
\newtheorem{lemma}[defi]{Lemma}

\newtheorem{cor}[defi]{Corollary}

\theoremstyle{definition}

\numberwithin{equation}{section}

 \title[Small order asymptotics for the fractional Laplacian]{Small order asymptotics of the Dirichlet eigenvalue problem for the fractional Laplacian}

\author[]
{Pierre Aime Feulefack, Sven Jarohs, Tobias Weth}

\address{Goethe-Universit\"{a}t Frankfurt, Institut f\"{u}r Mathematik.
Robert-Mayer-Str. 10, D-60629 Frankfurt, Germany.}

\email{feulefac@math.uni-frankfurt.de}

\address{Goethe-Universit\"{a}t Frankfurt, Institut f\"{u}r Mathematik.
Robert-Mayer-Str. 10, D-60629 Frankfurt, Germany.}

\email{jarohs@math.uni-frankfurt.de}

\address{Goethe-Universit\"{a}t Frankfurt, Institut f\"{u}r Mathematik.
Robert-Mayer-Str. 10, D-60629 Frankfurt, Germany.}

\email{weth@math.uni-frankfurt.de}

\begin{document}

\begin{abstract}
We study the asymptotics of Dirichlet eigenvalues and eigenfunctions of the fractional Laplacian $(-\Delta)^s$ in bounded open Lipschitz sets in the small order limit $s \to 0^+$. While it is easy to see that all eigenvalues converge to $1$ as $s \to 0^+$, we show that the first order correction in these asymptotics is given by the eigenvalues of the logarithmic Laplacian operator, i.e., the singular integral operator with symbol $2\log|\xi|$. By this we generalize a result of Chen and the third author which was restricted to the principal eigenvalue. 
Moreover, we show that $L^2$-normalized Dirichlet eigenfunctions of $(-\Delta)^s$ corresponding to the $k$-th eigenvalue are uniformly bounded and converge to the set of $L^2$-normalized eigenfunctions
of the logarithmic Laplacian. In order to derive these spectral asymptotics, we establish new uniform regularity and boundary decay estimates for Dirichlet eigenfunctions for the fractional Laplacian. As a byproduct, we also obtain corresponding regularity properties of eigenfunctions of the logarithmic Laplacian.
\end{abstract}

\maketitle

{\footnotesize

  \textit{Keywords:} Fractional Laplacian, small order expansion, logarithmic Laplacian, uniform regularity.\\

  \textit{MSC2010:} 35R11, 45C05, 26A33.
}

\section{Introduction}\label{intro}

Fueled by various applications and important links to stochastic processes and partial differential equations, the interest in nonlocal operators and associated Dirichlet problems has been growing rapidly in recent years. In this context, the  fractional Laplacian has received by far the most attention, see e.g. \cite{BLMH,BK,caffarelli2007extension,RefG,BB99,BKK08,RefCBV,RefK2,Ros14} and the references therein. We recall that, for compactly supported functions $u: \R^N \to  \R$ of class $C^2$ and $s \in (0,1)$, the fractional Laplacian $(-\Delta)^s$ is well-defined by
        \begin{equation}\label{c-n-s-constant}
(-\Delta)^s u(x)=C_{N,s}\lim_{\epsilon\to 0^+}\int_{\R^N\setminus B_{\epsilon}(x)}\frac{u(x)-u(y)}{|x-y|^{N+2s}}\ dy,\quad \text{where}
\quad C_{N,s}= s4^{s}\frac{\Gamma(\frac{N}{2}+s)}{\pi^{\frac{N}{2}}\Gamma(1-s)}.
\end{equation} 
The normalization constant $C_{N,s}$ is chosen such that $(-\Delta)^s$ is equivalently given by
\begin{equation}
  \label{eq:fourier-representation}
\cF\bigl((-\Delta)^su\bigr) = |\cdot|^{2s}\cF{u},    
\end{equation}
where, here and in the following, $\cF$ denotes the usual Fourier transform. We emphasize that the fractional Laplacian is an operator of order $2s$ and many related regularity properties -- in particular of associated eigenfunctions -- rely on this fact.

The present paper is concerned with the small order asymptotics $s \to 0^+$ of the Dirichlet eigenvalue problem   
 \begin{equation}\label{eq2}
	\begin{split}
	\quad\left\{\begin{aligned}
		(-\Delta)^s\phi_s &= \lambda \phi_s && \text{ in\ \  $\Omega$,}\\
		\phi_s &=  0             && \text{ in\ \  }\Omega^c,	 
	\end{aligned}\right.
	\end{split}
	\end{equation}
        where $\Omega \subset \R^N$ is a bounded open set with Lipschitz boundary and $\Omega^c:= \R^N \setminus \Omega$. It is well known (see \cite[Proposition 9]{SV13} or \cite[Proposition 3.1]{RefG}) that, for every $s \in (0,1)$, \eqref{eq2} admits an ordered sequence of eigenvalues  
\begin{equation}
  \label{eq:eigenvalues-fractional}
\lambda_{1,s}< \lambda_{2,s} \le \lambda_{3,s} \le  \dots   
\end{equation}
with $\lambda_{k,s} \to \infty$ as $k \to \infty$ and a corresponding $L^2$-orthonormal basis of eigenfunctions $\phi_{k,s}$, $k \in \N$. Moreover, $\phi_{1,s}$ is unique up to sign and can be chosen as a positive function.

The starting point of the present work is the basic observation that
\begin{equation}
  \label{eq:fractional-laplace-first-easy-conv}
  (-\Delta)^s u \to u \qquad \text{as $s \to 0^+$ for every $u \in C^2_c(\R^N)$,}
	\end{equation}
        which readily follows from \eqref{eq:fourier-representation} and standard properties of the Fourier transform (see also \cite[Proposition 4.4]{RefV0}. Similarly, we have
\begin{equation}
  \label{eq:fractional-laplace-quadratic-form-first-easy-conv}
\cE_s(u,u) \to \|u\|_{L^2(\R^N)}^2 \qquad \text{as $s \to 0^+$ for every $u \in C^1_c(\R^N)$,}
	\end{equation}
where $\cE_s$ denotes the quadratic form associated with $(-\Delta)^s$ given by 
        $$
(u,v) \mapsto \cE_s(u,v)=\frac{C_{N,s}}{2}\displaystyle\int_{\R^N}\int_{\R^N}\frac{\big(u(x)-u(y)\big)\big(v(x)-v(y)\big)}{|x-y|^{N+2s}}\ dxdy.
$$
We remark that these convergence properties in the limit $s \to 0^+$ extend to a non-Hilbertian setting of quasilinear operators where the Fourier transform cannot be employed, see e.g. \cite{cianchi-et-al} and the references therein. It is not difficult to deduce from (\ref{eq:fractional-laplace-first-easy-conv}) that 
\begin{equation}
  \label{eq:eigenvalues-convergence-to-one}
\lambda_{k,s} \to 1 \qquad \text{as $s \to 0^+$ for all $k \in \N$,}  
\end{equation}
see Section~\ref{section2} below for details. However, there is no straightforward approach to obtain the asymptotics of associated eigenfunctions as $s \to 0^+$ since, as a consequence of (\ref{eq:fractional-laplace-first-easy-conv}) and (\ref{eq:fractional-laplace-quadratic-form-first-easy-conv}), no uniform regularity theory is available for the fractional Laplacian $(-\Delta)^s$ in the case where $s$ is close to zero. For general bounded open sets with Lipschitz boundary, the only available result regarding these asymptotics is contained in \cite{RefT1}, where Chen and the third author introduced the logarithmic Laplacian operator $\loglap$ to give a more detailed description of the first eigenvalue $\lambda_{1,s}$ and the corresponding eigenfunction $\phi_{1,s}$ as $s \to 0^+$. On compactly supported Dini continuous functions, the operator $\loglap$ is pointwisely given by 
\begin{equation}\label{int-log}
	\loglap u(x) = C_N\int_{\R^N}\frac{u(x)1_{B_1(x)}(y)-u(y)}{|x-y|^N}\ dy +\rho_Nu(x),
\end{equation}
where $C_N = \pi^{-\frac{N}{2}}\Gamma(\frac{N}{2})$, and $\rho_{N} = 2\log2 + \psi(\frac{N}{2})-\gamma$. Here, $\psi = \frac{\Gamma'}{\Gamma}$ denotes the Digamma function, and $\gamma = -\Gamma'(1)$ is the Euler-Mascheroni constant. 

We note two key properties of the operator $\loglap$ shown in \cite{RefT1}. If $u \in C^\beta_c(\R^N)$ for some $\beta>0$, then 
\begin{equation}\label{Fourier-log}
\mathcal{F}(\loglap u) = 2\log|\xi|\mathcal{F}(u)(\xi)~~~~~~\text{ for a.e.}~~ \xi\in \R^N,
\end{equation}
and
\begin{equation}
  \label{eq:chen-weth-limit-log}
\frac{d}{ds}\Bigl|_{s=0}(-\Delta)^s u = \lim_{s \to 0^+}\frac{(-\Delta)^s u - u}{s}= \loglap u \qquad \text{in $L^p(\R^N)$ for $1<p \le \infty$.} 
\end{equation}
Hence, $\loglap$ arises as a formal derivative of fractional Laplacians at $s = 0$. As a consequence of \eqref{Fourier-log}, $\loglap$ is an operator of {\em logarithmic order}, and it belongs to a class of weakly singular integral operators having an intrinsic scaling property. Operators of this type have also been studied e.g. in \cite{RefE,RefJ2,RefJ1,RefMA}. The operator $\loglap$ also arises in a geometric context of the $0$-fractional perimeter studied recently in \cite{DNP}.

Using \eqref{eq:chen-weth-limit-log} and related functional analytic properties, it has been shown in \cite[Theorem 1.5]{RefT1} that
\begin{equation}
  \label{eq:RefT1-spectral-convergence}
\frac{\lambda_{1,s}-1}{s} \to \lambda_{1,L} \quad \text{and}\quad \phi_{1,s} \to \phi_{1,L} \quad \text{in $L^2(\Omega)$} \qquad \text{as $s \to 0^+$,}  
\end{equation}
where $\lambda_{1,L}$ denotes the principal eigenvalue of the Dirichlet eigenvalue problem 
\begin{equation}\label{eq-eigenvalue-loglap}
	\begin{split}
	\quad\left\{\begin{aligned}
		\loglap u &= \lambda u&& \text{ in\ \  $\Omega$,}\\
		u &=  0             && \text{ in\ \  }\Omega^c,	 
	\end{aligned}\right.
	\end{split}
	\end{equation}
        and $\phi_{1,L}$ denotes the corresponding (unique) positive $L^2$-normalized eigenfunction. Here we note that we consider both (\ref{eq2}) and (\ref{eq-eigenvalue-loglap}) in a suitable weak sense which we will make more precise below. 

The main aim of the present paper is twofold. First, we wish to improve the $L^2$-convergence $\phi_{1,s} \to \phi_{1,L}$ in \eqref{eq:RefT1-spectral-convergence}. For this, new tools are needed in order to overcome the lack of uniform regularity estimates for the fractional Laplacian $(-\Delta)^s$ for $s$ close to zero. Secondly, we wish to extend the convergence result from \cite{RefT1} to higher eigenvalues and eigenfunctions. Due to the multiplicity of eigenvalues and eigenfunctions for $k\geq 2$, this also requires a new approach based on the use of Fourier transform in combination with the Courant-Fischer characterization of eigenvalues.

In order to state our main results, we need to introduce some notation regarding the weak formulations of (\ref{eq2}) and (\ref{eq-eigenvalue-loglap}). For the weak formulation of (\ref{eq2}), we consider the standard Sobolev space
\begin{equation}\label{wsp-d-intro}
\cH^{s}_{0}(\Omega):=\left\{u\in H^{s}(\R^N)\;:\;u\equiv 0 \text{ on  $\Omega^c$}\right\} 
\end{equation}
and we call $\phi \in \cH^s_0(\Omega)$ an eigenfunction of \eqref{eq2} corresponding to the eigenvalue $\lambda$ if
$$
\cE_s(\phi,v) = \lambda \int_{\Omega} \phi v \ dx \qquad \text{for all $v \in \cH^s_0(\Omega)$.}
$$
For the weak formulation of (\ref{eq-eigenvalue-loglap}), we follow \cite{RefT1} and define the space
\begin{equation}\label{log-space-intro}
\cH^0_0(\Omega):=\Big\{u\in L^2(\R^N)\;:\; u\equiv 0 \text{ on  $\Omega^c,\quad $} \langle u,u \rangle_{\smhoo}<+\infty \Big\}, 
\end{equation}
where the quadratic form $\langle \cdot,\cdot \rangle_{\smhoo}$ is given by 
\begin{equation}\label{scalar-prod-L-intro}
(u,v)\mapsto \langle u,v \rangle_{\smhoo} :=  \frac{C_N}{2}\iint_{\substack{{x,y\in \R^N}\\|x-y|< 1}}\frac{(u(x)-u(y))(v(x)-v(y))}{|x-y|^N}\ dxdy.
\end{equation}
A function $\phi \in \cH^0_0(\Omega)$ is called an eigenfunction of \eqref{eq-eigenvalue-loglap} corresponding to the eigenvalue $\lambda$ if
$$
\cE_L(\phi,v) = \lambda \int_{\Omega} \phi v\ dx \qquad \text{for all $v \in \cH^0_0(\Omega)$,}
$$
where 
\begin{equation}\label{EL-diff-sign-intro}
(u,v) \mapsto \cE_{L}(u,v) = \langle u,v \rangle_{\smhoo}-C_N\iint_{\substack{{x,y\in \R^N}\\|x-y|\ge 1}}\frac{u(x)v(y)}{|x-y|^N}\ dxdy + \rho_N\int_{\R^N}uv\ dx
\end{equation}
is the quadratic form associated with $\loglap$. For more details, see Section~\ref{section2} below and \cite{RefT1}.\\

The first main result of this paper now reads as follows.

\begin{thm}
\label{main-theorem-introduction}
Let $\Omega \subset \R^N$ be a bounded open set with Lipschitz boundary and let $k \in \N$. Moreover, for $s \in (0,\frac{1}{4})$, let $\lambda_{k,s}$ resp. $\lambda_{k,L}$ denote the $k$-th Dirichlet eigenvalue of the fractional and logarithmic Laplacian, respectively, and let $\phi_{k,s}$ denote an $L^2$-normalized eigenfunction. Then we have:  
\begin{enumerate}
\item[(i)] The eigenvalue $\lambda_{k,s}$ satisfies the expansion
\begin{equation}
  \label{eq:eigenvalue-expansion-intro}
\lambda_{k,s} = 1 + s \lambda_{k,L} + o(s) \qquad \text{as $s \to 0^+$.}
\end{equation}
\item[(ii)] The set $\{\phi_{k,s}\::\: s \in (0,\frac{1}{4}]\}$ is bounded in $L^\infty(\Omega)$ and relatively compact in $L^p(\Omega)$ for every $p<\infty$.
\item[(iii)] The set $\{\phi_{k,s}\::\: s \in (0,\frac{1}{4}]\}$ is
equicontinuous in every point $x_0 \in \Omega$ and therefore relative compact in $C(K)$ for any compact subset $K \subset \Omega$.  
\item[(iv)] If $\Omega$ satisfies an exterior sphere condition, then the set
  $\{\phi_{k,s}\::\: s \in (0,\frac{1}{4}]\}$ is relative compact in the space $C_0(\Omega) := \{u \in C(\R^N)\::\: u \equiv 0 \quad \text{in $\Omega^c$}\}$.
\item[(v)] If $(s_n)_n \subset (0,\frac{1}{4}]$ is a sequence with $s_n \to 0$ as $n \to \infty$, then, after passing to a subsequence, we have
  \begin{equation}
    \label{eq:convergence-eigenfunctions-main-theorem}
  \phi_{k,s_n} \to \phi_{k,L} \quad \text{as $n \to \infty$}
  \end{equation}
  in $L^p(\Omega)$ for $p < \infty$ and locally uniformly in $\Omega$, where $\phi_{k,L}$ is an $L^2$-normalized eigenfunction of the logarithmic Laplacian corresponding to the eigenvalue $\lambda_{k,L}$.\\[0.2cm]  
  If, moreover, $\Omega$ satisfies an exterior sphere condition, then the convergence in \eqref{eq:convergence-eigenfunctions-main-theorem} is uniform in $\overline \Omega$. 
\end{enumerate}
\end{thm}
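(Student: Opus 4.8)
The plan is to build everything on the Courant--Fischer characterizations
\begin{gather*}
\lambda_{k,s}=\min_{\substack{V\subset\cH^s_0(\Omega)\\\dim V=k}}\;\max_{u\in V\setminus\{0\}}\frac{\cE_s(u,u)}{\|u\|_{L^2(\Omega)}^2},\\
\lambda_{k,L}=\min_{\substack{V\subset\cH^0_0(\Omega)\\\dim V=k}}\;\max_{u\in V\setminus\{0\}}\frac{\cE_L(u,u)}{\|u\|_{L^2(\Omega)}^2},
\end{gather*}
combined with the Plancherel identities $\cE_s(u,u)=\int_{\R^N}|\xi|^{2s}|\widehat u(\xi)|^2\,d\xi$ (from \eqref{eq:fourier-representation}) for $u\in\cH^s_0(\Omega)$ and $\cE_L(u,u)=\int_{\R^N}2\log|\xi|\,|\widehat u(\xi)|^2\,d\xi$ (from \eqref{Fourier-log}) for $u\in\cH^0_0(\Omega)$, and the convexity bound $e^t\ge 1+t$. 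Since $\Omega$ is bounded I also use the inclusion $\cH^s_0(\Omega)\subset\cH^0_0(\Omega)$, which holds because $|x-y|^{-N}\le|x-y|^{-N-2s}$ on $\{|x-y|<1\}$, so that the subspaces appearing in the first min--max are admissible in the second. The uniform-in-$s$ regularity facts for eigenfunctions (a uniform $L^\infty$ bound, a uniform interior modulus of continuity, a uniform boundary decay rate under an exterior sphere condition, together with $H^{s_0}$-regularity of the $\loglap$-eigenfunctions for some $s_0>0$) are taken from the preceding sections.

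For part (i), applying $e^t\ge 1+t$ with $t=2s\log|\xi|$ gives $|\xi|^{2s}\ge 1+2s\log|\xi|$ pointwise, hence $\cE_s(u,u)\ge\|u\|_{L^2}^2+s\,\cE_L(u,u)$ for every $u\in\cH^s_0(\Omega)$; feeding $V_{k,s}:=\operatorname{span}\{\phi_{1,s},\dots,\phi_{k,s}\}\subset\cH^0_0(\Omega)$ into the second min--max and using $\max_{V_{k,s}}\cE_s(u,u)/\|u\|_{L^2}^2=\lambda_{k,s}$ then yields the lower bound $\lambda_{k,L}\le(\lambda_{k,s}-1)/s$ with no error term. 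For the matching upper bound I would test the first min--max with $V_{k,L}:=\operatorname{span}\{\phi_{1,L},\dots,\phi_{k,L}\}$ (an $L^2$-orthonormal, hence $\cE_L$-orthogonal, basis of the first $k$ eigenspaces of $\loglap$), which lies in $\cH^s_0(\Omega)$ once $s<s_0$ by the assumed $H^{s_0}$-regularity. Writing, for $u$ on the unit sphere of $V_{k,L}$,
\[
\cE_s(u,u)=\|u\|_{L^2}^2+s\,\cE_L(u,u)+\int_{\R^N}\bigl(|\xi|^{2s}-1-2s\log|\xi|\bigr)|\widehat u(\xi)|^2\,d\xi,
\]
with $\cE_L(u,u)\le\lambda_{k,L}$, it remains to see that the last integral is $o(s)$ uniformly over $V_{k,L}$. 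I would split the frequency integral at $|\xi|=e^{1/(2s)}$: on the low-frequency part the bounds $e^t-1-t\le\frac{t^2}{2}$ for $t\le 0$ and $e^t-1-t\le t^2e^t$ for $t\ge 0$ make the integrand $O\bigl(s^2(\log|\xi|)^2|\widehat u|^2\bigr)$, which integrates to $O(s^2)$ using $\phi_{j,L}\in H^{s_0}$ at high frequencies and $\phi_{j,L}\in L^1$ for the logarithmic singularity near $\xi=0$; on the high-frequency part $e^t-1-t\le e^t$ and $|\xi|^{2s}\le e^{1-s_0/s}|\xi|^{2s_0}$ (for $s<s_0$) give a bound of order $e^{1-s_0/s}$. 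Finite-dimensionality of $V_{k,L}$ makes these estimates uniform, so $\lambda_{k,s}\le 1+s\lambda_{k,L}+o(s)$, and \eqref{eq:eigenvalue-expansion-intro} follows.

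For parts (ii)--(iv) the starting point is that the lower-bound inequality just proved gives $\cE_L(\phi_{k,s},\phi_{k,s})\le(\lambda_{k,s}-1)/s$, bounded uniformly in $s\in(0,\tfrac14]$; since the two non-local tail terms in \eqref{EL-diff-sign-intro} are controlled by $\|\phi_{k,s}\|_{L^1(\Omega)}^2\le|\Omega|$ and by $|\rho_N|$, the family $\{\phi_{k,s}\}$ is bounded in $\cH^0_0(\Omega)$. The compact embedding $\cH^0_0(\Omega)\hookrightarrow\hookrightarrow L^2(\Omega)$ then gives relative compactness in $L^2(\Omega)$, and interpolation with the uniform $L^\infty$-bound upgrades this to relative compactness in $L^p(\Omega)$ for all $p<\infty$, proving (ii). Parts (iii) and (iv) are then Arzel\`a--Ascoli applied, respectively, to the uniform interior modulus of continuity and --- under the exterior sphere condition --- to the uniform boundary decay estimate (which makes the zero-extended eigenfunctions equicontinuous on $\R^N$), in both cases together with the uniform $L^\infty$-bound.

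For part (v), given $s_n\to0$, parts (ii)--(iv) let me pass to a subsequence with $\phi_{k,s_n}\rightharpoonup\phi_*$ in $\cH^0_0(\Omega)$, $\phi_{k,s_n}\to\phi_*$ in $L^p(\Omega)$ for all $p<\infty$ and locally uniformly in $\Omega$ (uniformly on $\overline\Omega$ under the exterior sphere condition), with $\|\phi_*\|_{L^2(\Omega)}=1$, so $\phi_*\ne 0$ and $\phi_*\in\cH^0_0(\Omega)$. Taking $v\in C^\infty_c(\Omega)$ in the weak eigenvalue equation and using the symmetry identity $\cE_{s_n}(\phi_{k,s_n},v)=\int_{\R^N}\phi_{k,s_n}\,(-\Delta)^{s_n}v\,dx$, I rewrite it as
\[
\int_{\R^N}\phi_{k,s_n}\,\frac{(-\Delta)^{s_n}v-v}{s_n}\,dx=\frac{\lambda_{k,s_n}-1}{s_n}\int_{\Omega}\phi_{k,s_n}v\,dx .
\]
By \eqref{eq:chen-weth-limit-log} the difference quotient converges to $\loglap v$ in $L^2(\R^N)$, so the left-hand side converges to $\int_{\R^N}\phi_*\,\loglap v\,dx=\cE_L(\phi_*,v)$, while by part (i) the right-hand side converges to $\lambda_{k,L}\int_\Omega\phi_* v\,dx$. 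Density of $C^\infty_c(\Omega)$ in $\cH^0_0(\Omega)$ then shows that $\phi_*$ is an $L^2$-normalized eigenfunction of $\loglap$ at the eigenvalue $\lambda_{k,L}$, so we may take $\phi_{k,L}=\phi_*$. The genuinely hard work sits not in this final assembly but in the uniform regularity input it consumes: because classical fractional regularity constants degenerate as $s\to0^+$, the uniform $L^\infty$, interior $C^\alpha$-type, and boundary decay estimates for $(-\Delta)^s$-eigenfunctions have to be derived by order-independent arguments; within the proof above the only delicate point is guaranteeing that the Fourier error term in the upper bound of (i) is genuinely $o(s)$, uniformly over the test space $V_{k,L}$.
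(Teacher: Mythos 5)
Your assembly of parts (ii)--(v) from the uniform $L^\infty$-bound, the interior equicontinuity, the boundary decay and the compact embedding $\cH^0_0(\Omega)\hookrightarrow L^2(\Omega)$ matches the paper's Section~\ref{sec:proof-main-theorems} (your interpolation route to $L^p$-compactness in (ii) is in fact slightly cleaner than the paper's appeal to Kolmogorov--Riesz), and your identification of the limit in (v) by testing against $C^\infty_c(\Omega)$ and using \eqref{eq:chen-weth-limit-log} is exactly the argument of Theorem~\ref{lambda-limit-lower-bound}. Your lower bound in (i) is also a genuinely different and rather elegant route: the pointwise inequality $|\xi|^{2s}\ge 1+2s\log|\xi|$ gives $\cE_L(u,u)\le s^{-1}(\cE_s(u,u)-\|u\|_{L^2}^2)$, and inserting $\mathrm{span}\{\phi_{1,s},\dots,\phi_{k,s}\}$ into the min--max for $\lambda_{k,L}$ yields $\lambda_{k,L}\le (\lambda_{k,s}-1)/s$ for \emph{every} $s$, whereas the paper only obtains $\liminf_{s\to0^+}(\lambda_{k,s}-1)/s\ge\lambda_{1,L}$ at first and must run the full compactness and limit-identification machinery of Theorem~\ref{lambda-limit-lower-bound} (the span argument around \eqref{sum}) to close the gap. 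You should, however, justify the identity $\cE_L(u,u)=\int_{\R^N}2\log|\xi|\,|\hat u(\xi)|^2\,d\xi$ on $\cH^s_0(\Omega)$, since \eqref{Fourier-log} is stated only for $C^\beta_c$ functions; this extension is available in \cite{RefT1} but needs to be cited.

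The genuine gap is in your upper bound for (i). You test the min--max for $\lambda_{k,s}$ with $V_{k,L}=\mathrm{span}\{\phi_{1,L},\dots,\phi_{k,L}\}$ and justify $V_{k,L}\subset\cH^s_0(\Omega)$ by an ``$H^{s_0}$-regularity of the $\loglap$-eigenfunctions for some $s_0>0$'' which you claim is ``taken from the preceding sections.'' No such result exists in the paper: the only regularity statement for eigenfunctions of $\loglap$ is Corollary~\ref{cor-regularity-loglap} ($L^\infty\cap C_{loc}$, and $C_0(\Omega)$ under an exterior sphere condition), and it is itself \emph{deduced from} Theorem~\ref{main-theorem-introduction}, so invoking it here would be circular. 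Nor is the fact obvious: a priori $\phi_{k,L}$ only lies in $\cH^0_0(\Omega)$, a space of logarithmic order that strictly contains every $\cH^{s}_0(\Omega)$, and membership in some fixed $H^{s_0}(\R^N)$ would require a separate (boundary and interior) regularity argument that is nowhere supplied. Your subsequent frequency-splitting estimate of the remainder also leans on this same unproven $H^{s_0}$-bound. The repair is exactly Lemma~\ref{upper-est-lambda_k} of the paper: by the density-based characterization \eqref{char-eigen} with $s=0$ it suffices to test the min--max for $\lambda_{k,s}$ with arbitrary $k$-dimensional subspaces $V\subset C^2_c(\Omega)$ satisfying $\max_{S_V}\cE_0\le\lambda_{k,L}+\eps$; for such $V$ the Taylor remainder $\bigl||\xi|^{2s}-1-2s\log|\xi|\bigr|$ is controlled by Lemma~\ref{elementary-est}, and Lemma~\ref{quadratic-form-est} bounds the error by $4s^2(\kappa_N\|u\|_{L^1}^2+\|\Delta u\|_{L^2}^2)$, which is $o(s)$ uniformly on the finite-dimensional sphere $S_V$ without any regularity input on the $\loglap$-eigenfunctions. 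With that substitution your argument for (i) closes, and the rest of the proposal stands.
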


Here and in the following, we identify the space $L^p(\Omega)$ with the space of functions $u \in L^p(\R^N)$ with $u \equiv 0$ on $\Omega$.

\begin{remark}
  \label{rem-main-thm-intro}
  \begin{enumerate}
  \item[(i)] Theorem~\ref{main-theorem-introduction} complements \cite[Theorem 1.5]{RefT1} by emphazising the relevance of higher Dirichlet eigenvalues and eigenfunctions of $\loglap$ for the spectral asymptotics of the fractional Laplacian as $s \to 0^+$. We note that upper and lower bounds for the Dirichlet eigenvalues $\lambda_{k,L}$ of the logarithmic Laplacian and corresponding Weyl type asymptotics in the limit $k \to + \infty$ have been derived in \cite{laptev-weth} and more recently in \cite{chen-veron}.
      \item[(ii)] The number $\frac{1}{4}$ in the above theorem is chosen for technical reasons, as it allows to reduce the number of case distinctions in the arguments. In the case $N \ge 2$, it can be replaced by any fixed number smaller than $1$, and in the case $N=1$ it can be replaced by any fixed number smaller than $\frac{1}{2}$. Since we are only interested in parameters $s$ close to zero in this paper, we omit the details of such an extension.  
  \item[(iii)] In the case where $\Omega$ is a bounded open Lipschitz set satisfying an exterior sphere condition and $k \in \N$ is fixed, the functions $\phi_{k,s}$, $s \in (0,\frac{1}{4}]$ satisfy a uniform decay condition in the sense that there exists a function $h_k \in C_0(\Omega)$ with the property that $|\phi_{k,s}| \le h_k$ in $\Omega$ for all $s \in (0,\frac{1}{4}]$. This is a rather direct consequence of Theorem~\ref{main-theorem-introduction}(iv). However, the optimal rate of the uniform boundary remains open. We conjecture that the function $h$ can be chosen with the property that
    $$
    h_k(x) \sim \bigl(- \ln \dist(x,\Omega^c)\bigr)^{-\tau} \qquad \text{as $x \to \partial \Omega$ for some $\tau>0$.}
    $$
  \end{enumerate}
\end{remark}
\smallskip

As noted already, the principal eigenvalue $\lambda_{1,s}(\Omega)$ admits, up to sign, a unique $L^2$-normalized eigenfunction which can be chosen to be positive. Hence Theorem~\ref{main-theorem-introduction} and \cite[Theorem 1.5]{RefT1} give rise to the following corollary.

\begin{cor}
  \label{first-cor-intro}
  Let $\Omega \subset \R^N$ be a bounded open set with Lipschitz boundary and let, for $s \in (0,\frac{1}{4}]$, $\phi_{1,s}$ denote the unique positive $L^2$-normalized eigenfunction of $(-\Delta)^s$ corresponding to the principal Dirichlet eigenvalue $\lambda_{1,s}$. Then we have 
  \begin{equation}
    \label{eq:convergence-first-eigenfunction-cor}
  \phi_{1,s} \to \phi_{1,L} \quad \text{as $s \to 0^+$}
  \end{equation}
  in $L^p(\Omega)$ for $p < \infty$ and locally uniformly in $\Omega$, where $\phi_{1,L}$ is the unique positive $L^2$-normalized eigenfunction of $\loglap$ corresponding to the principal Dirichlet eigenvalue $\lambda_{1,L}$.\\[0.2cm]  
  If, moreover, $\Omega$ satisfies an exterior sphere condition, then the convergence in \eqref{eq:convergence-first-eigenfunction-cor} is uniform in $\overline \Omega$. 
\end{cor}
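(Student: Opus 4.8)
The corollary is a direct consequence of Theorem~\ref{main-theorem-introduction} together with the uniqueness and positivity of the principal eigenfunctions, and the plan is essentially to upgrade the subsequential convergence in Theorem~\ref{main-theorem-introduction}(v) to genuine convergence as $s \to 0^+$. First I would recall the relevant simplicity facts: for each $s \in (0,\frac14]$ the principal Dirichlet eigenvalue $\lambda_{1,s}$ of $(-\Delta)^s$ is simple and $\phi_{1,s}$ is, up to sign, the unique $L^2$-normalized eigenfunction, which we fix to be nonnegative; likewise, by \cite{RefT1}, the principal Dirichlet eigenvalue $\lambda_{1,L}$ of $\loglap$ is simple and $\phi_{1,L}$ is the unique $L^2$-normalized eigenfunction associated with $\lambda_{1,L}$ that is nonnegative (indeed positive) in $\Omega$.

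Next, let $(s_n)_n \subset (0,\frac14]$ be an arbitrary sequence with $s_n \to 0$. By Theorem~\ref{main-theorem-introduction}(v) applied with $k=1$, after passing to a subsequence we obtain $\phi_{1,s_n} \to \psi$ in $L^p(\Omega)$ for every $p < \infty$ and locally uniformly in $\Omega$ (and, if $\Omega$ satisfies an exterior sphere condition, uniformly in $\overline{\Omega}$), where $\psi$ is an $L^2$-normalized eigenfunction of $\loglap$ associated with $\lambda_{1,L}$. Since each $\phi_{1,s_n}$ is nonnegative, so is the limit $\psi$; since convergence holds in $L^2(\Omega)$ (as $\Omega$ is bounded) the function $\psi$ is $L^2$-normalized and in particular not identically zero; hence, by the simplicity of $\lambda_{1,L}$ and the fact that $\phi_{1,L}$ is the unique nonnegative $L^2$-normalized eigenfunction at this level, we conclude $\psi = \phi_{1,L}$.

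Finally I would invoke the standard subsequence principle: the subsequential limit $\phi_{1,L}$ identified above is independent of the chosen sequence $(s_n)_n$ and of the chosen subsequence, and therefore the whole family $\{\phi_{1,s}\}_{s \in (0,\frac14]}$ converges to $\phi_{1,L}$ as $s \to 0^+$ in each of the indicated topologies. Concretely, if convergence failed in one of these topologies, one could extract a sequence $s_n \to 0$ with $\phi_{1,s_n}$ staying at distance at least $\eps>0$ from $\phi_{1,L}$, contradicting the previous paragraph. This yields \eqref{eq:convergence-first-eigenfunction-cor} in $L^p(\Omega)$ for $p<\infty$ and locally uniformly in $\Omega$, as well as uniformly in $\overline{\Omega}$ when $\Omega$ satisfies an exterior sphere condition.

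The only delicate point is the identification of the subsequential limit $\psi$ with the fixed eigenfunction $\phi_{1,L}$, which rests entirely on the simplicity of the principal eigenvalue of $\loglap$ and on the preservation of the $L^2$-normalization in the limit (ruling out $\psi \equiv 0$); both ingredients are already available, so no new estimates beyond Theorem~\ref{main-theorem-introduction} are needed.
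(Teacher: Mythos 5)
Your proposal is correct and follows essentially the same route as the paper: a subsequence/compactness argument based on Theorem~\ref{main-theorem-introduction} combined with the uniqueness of the positive $L^2$-normalized principal eigenfunction, upgraded to full convergence by the standard subsequence principle. The only cosmetic difference is that you identify the subsequential limit via sign preservation and the simplicity of $\lambda_{1,L}$, whereas the paper identifies it by invoking the already-known $L^2$-convergence $\phi_{1,s}\to\phi_{1,L}$ from \cite[Theorem 1.5]{RefT1}; both identifications are valid and rest on the same simplicity facts.
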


As a further corollary of Theorem~\ref{main-theorem-introduction}, we shall derive the following regularity properties of eigenfunctions of the logarithmic Laplacian.

\begin{cor}
  \label{cor-regularity-loglap}
Let $\Omega \subset \R^N$ be a bounded open set with Lipschitz boundary, and let $\phi \in \cH^0_0(\Omega)$ be an eigenfunction of \eqref{eq-eigenvalue-loglap}. Then $\phi \in L^\infty(\Omega) \cap C_{loc}(\Omega)$. Moreover, if $\Omega$ satisfies an exterior sphere condition, then $\phi \in C_0(\Omega)$.    
\end{cor}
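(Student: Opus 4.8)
The plan is to derive Corollary~\ref{cor-regularity-loglap} from Theorem~\ref{main-theorem-introduction} by a Courant--Fischer / eigenspace-matching argument, rather than by working directly with the singular integral $\loglap$. First I would fix an eigenfunction $\phi \in \cH^0_0(\Omega)$ of $\loglap$ associated to some eigenvalue $\lambda$. Since the spectrum of $\loglap$ is discrete with finite multiplicities, $\lambda = \lambda_{k,L}$ for some $k$ (possibly with $\lambda_{k-1,L} = \lambda_{k,L}$ or $\lambda_{k,L} = \lambda_{k+1,L}$ when the eigenvalue is degenerate). The key point is that Theorem~\ref{main-theorem-introduction}(v), applied for each index $j$ in the block $\{j : \lambda_{j,L} = \lambda\}$, shows that along any sequence $s_n \to 0^+$ the $L^2$-normalized fractional eigenfunctions $\phi_{j,s_n}$ subconverge to eigenfunctions of $\loglap$ in the $\lambda$-eigenspace; and together with the uniform $L^\infty$ bound in part (ii) and the equicontinuity in parts (iii)--(iv), one obtains that \emph{every} $L^2$-normalized eigenfunction of $\loglap$ in this finite-dimensional eigenspace is a limit (in $C_{loc}(\Omega)$, and in $C_0(\Omega)$ under the exterior sphere condition) of fractional eigenfunctions, hence inherits the regularity $L^\infty(\Omega) \cap C_{loc}(\Omega)$ (resp. membership in $C_0(\Omega)$).

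The step requiring care is to pass from the subsequential convergence statements of Theorem~\ref{main-theorem-introduction}(v), which a priori concern \emph{some} eigenfunction $\phi_{k,L}$ in the eigenspace, to the conclusion that \emph{the given} $\phi$ is reachable. I would argue as follows. Let $m = \dim \ker(\loglap - \lambda)$ and let $j_0+1, \dots, j_0+m$ be the indices with $\lambda_{j,L} = \lambda$. Pick any sequence $s_n \to 0$. By part (v) and a diagonal extraction, after passing to a subsequence we may assume $\phi_{j,s_n} \to \psi_j$ in $L^2(\Omega)$ (and locally uniformly, resp. uniformly) for each $j \in \{j_0+1,\dots,j_0+m\}$, where each $\psi_j$ is an $L^2$-normalized eigenfunction of $\loglap$ for $\lambda$. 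Because the $\phi_{j,s_n}$ are mutually $L^2$-orthogonal, so are the limits $\psi_j$; hence $\{\psi_{j_0+1},\dots,\psi_{j_0+m}\}$ is an orthonormal basis of $\ker(\loglap-\lambda)$. Consequently $\phi = \sum_{j} c_j \psi_j$ for suitable scalars $c_j$, and therefore $\phi$ is a finite linear combination of functions lying in $L^\infty(\Omega) \cap C_{loc}(\Omega)$ (resp. in $C_0(\Omega)$), so $\phi$ has the asserted regularity.

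For the quantitative $L^\infty$ bound one does not even need the basis argument: any $\psi_j$ as above satisfies $\|\psi_j\|_{L^\infty} \le \liminf_n \|\phi_{j,s_n}\|_{L^\infty} \le \sup_{s \in (0,1/4]}\|\phi_{j,s}\|_{L^\infty} < \infty$ by part (ii), where the first inequality uses that locally uniform (a.e.\ pointwise) convergence is compatible with the sup norm on $\Omega$; and the continuity statements follow since $C_{loc}(\Omega)$ and $C_0(\Omega)$ are closed under the respective modes of convergence supplied by (v). The only genuine subtlety, and the part I expect to be the main obstacle in writing this cleanly, is bookkeeping the degenerate case: one must be sure that Theorem~\ref{main-theorem-introduction} is being invoked for \emph{each} eigenvalue index in the degenerate block and that the orthogonality of the fractional eigenfunctions $\phi_{j,s}$ (which holds because they belong to an $L^2$-orthonormal basis for fixed $s$) indeed survives the limit — this is immediate from $L^2$-convergence but should be stated. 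Modulo this, the corollary is a direct consequence of Theorem~\ref{main-theorem-introduction}, and I would present it in three short paragraphs: reduction to an eigenspace, the orthonormal-basis limit argument, and the transfer of the three regularity properties.
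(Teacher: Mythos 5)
Your proposal is correct and follows essentially the same route as the paper: the paper likewise passes to a subsequence $s_n\to 0^+$ along which the $L^2$-orthonormal fractional eigenfunctions $\phi_{k,s_n}$ converge to an $L^2$-orthonormal family of eigenfunctions $\phi_{k,L}$ inheriting the regularity from parts (ii)--(v) of Theorem~\ref{main-theorem-introduction}, and then writes an arbitrary eigenfunction as a finite linear combination of these limits. Your explicit bookkeeping of the degenerate eigenvalue block (orthogonality surviving the $L^2$-limit, hence an orthonormal basis of the finite-dimensional eigenspace) is exactly the justification implicit in the paper's final sentence.
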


\begin{remark}
  \label{sec:introduction}
  If $\Omega$ is a bounded open Lipschitz set satisfying an exterior sphere condition, then we can deduce from the $L^\infty$-bound on $\phi$ given in Corollary~\ref{cor-regularity-loglap} and \cite[Theorem 1.11]{RefT1} that
  every eigenfunction $\phi \in \cH^0_0(\Omega)$ of \eqref{eq-eigenvalue-loglap}
  satisfies
  $$
  |\phi(x)|= O\Bigl(\bigl(-\ln \dist(x,\Omega^c)\bigr)^{-\tau}\Bigr) \qquad \text{as $x \to \partial \Omega$ for all $\tau \in (0,\frac{1}{2}).$}
  $$
  This motivates the conjecture in Remark~\ref{rem-main-thm-intro}(ii). In fact, we shall use the same barrier function as in \cite[Theorem 1.11]{RefT1} for $\loglap$ to prove Theorem~\ref{main-theorem-introduction}(iv), but we cannot use it in the same way since we are not able to derive uniform bounds on the difference
  \begin{equation}
    \label{eq:difference-loglap-frac}
\Bigl[  \loglap - \frac{(-\Delta)^s-\id}{s}\Bigr]\phi_{k,s}
  \end{equation}
  close to the boundary $\partial \Omega$ as $s \to 0^+$. 
\end{remark}
\smallskip

The paper is organized as follows. In Section  \ref{section2}, we collect preliminary results on the functional analytic setting. Moreover, we prove the asymptotic expansion~\eqref{eq:eigenvalue-expansion-intro} and the $L^2$-convergence property asserted in Theorem~\ref{main-theorem-introduction}(v). In Section~\ref{section-regularity-of-LD}, we prove the uniform $L^\infty$-bound on eigenfunctions as stated in Theorem~\ref{main-theorem-introduction}(ii). For this, we use a new technique based on the splitting of the integral over $\R^N$ on a small ball of radius  $\delta$ ($\delta$-decomposition) and apply known results and conditions associated to the newly obtained quadratic form as in \cite{RefM1,RefJ1}. We emphasize that this technique strongly simplifies the general De Giorgi iteration method in combination with Sobolev embedding to prove $L^{\infty}$-bounds. We also point out that this $\delta$-decomposition method is applicable for general nonlocal operators and allows to get explicit constants for the boundedness.
In Section~\ref{section-Equic}, we then prove the local equicontinuity result stated in Theorem~\ref{main-theorem-introduction}(iii). A natural strategy of proving this result is to first obtain a locally uniform estimate for the difference in \eqref{eq:difference-loglap-frac} and then to apply regularity estimate from \cite{RefMA} for weakly singular integral operators which applies, in particular, to the logarithmic Laplacian $\loglap$. However, we are not able to obtain uniform estimates for the difference in \eqref{eq:difference-loglap-frac}. Therefore we first prove uniform bounds related to an auxiliary integral operator instead (see Lemma~\ref{interior-basis2} below) and then complete the proof by a contradiction argument. 
In Section~\ref{sec:unif-bound-decay}, we prove, assuming a uniform exterior sphere condition for $\Omega$, a uniform decay property for the set of eigenfunctions $\{\phi_{k,s}\::\: s \in (0,\frac{1}{4}]\}$. Combining this uniform decay property with the local equicontinuity proved in Section~\ref{section-Equic}, the relative compactness in $C_0(\Omega)$ then follows, as claimed in Theorem~\ref{main-theorem-introduction}(iv).
In Section~\ref{sec:proof-main-theorems}, we finally complete the proof of the main results stated here in the introduction.
\medskip

\textbf{Notation.}
We let $\omega_{N-1}= \frac{2\pi^{\frac{N}{2}}}{\Gamma(\frac{N}{2})} = \frac{2}{C_N}$ denote the measure of the unit sphere in $\R^N$. For a set $A \subset\R^N$ and $x \in \R^N$, we define $\delta_A(x):=\dist(x,A^c)$ with $A^c=\R^N\setminus A$ and, if $A$ is measurable, then $|A|$ denotes its Lebesgue measure. Moreover, for given $r>0$, let $B_r(A):=\{x\in \R^N\;:\; \dist(x,A)<r\}$, and let $B_r(x):=B_r(\{x\})$ denote the ball of radius $r$ with $x$ as its center. If $x=0$ we also write $B_r$ instead of $B_r(0)$.\\
For $A\subset \R^N$ and $u:A\to \R$ we denote $u^+:=\max\{u,0\}$ as the positive and $u^-=-\min\{u,0\}$ as the negative part of $u$, so that $u=u^+-u^-$. Moreover, we let
$$
\underset{A}{\osc}\, u := \sup_A u - \inf_A u \quad \in \: [0,\infty]
$$
denote the oscillation of $u$ over $A$. 
If $A$ is open, we denote by $C^k_c(A)$ the space of function $u:\R^N\to \R$ which are $k$-times continuously differentiable and with support compactly contained in $A$.\\

\textbf{Acknowledgements.} This work is supported by DAAD and BMBF (Germany) within the project 57385104. The authors would like to thank Mouhamed Moustapha Fall for helpful discussions.

\section{First order expansion of eigenvalues and \texorpdfstring{$L^2$}{L2}-convergence of eigenfunctions}\label{section2}

In this section, we first collect some preliminary notions and observations. After this, we complete the proof Theorem~\ref{main-theorem-introduction}(i), see Theorem~\ref{lambda-limit-lower-bound} below.
 
For $s \in (0,1)$, we use the fractional Sobolev space $H^s(\R^N)$ defined as
\begin{equation}\label{wsp-rn}
H^{s}(\R^N)=\Bigg\{u\in L^{2}(\R^N)\;:\; \int_{\R^N}\int_{\R^N}\frac{|u(x)-u(y)|^2}{|x-y|^{N+2s}}\ dxdy<\infty\Bigg\},
\end{equation}
with corresponding norm given by
\begin{equation}\label{N1}
\displaystyle\|u\|_{H^s(\R^N)} = \left(\|u\|^{2}_{L^2(\R^N)} + \int_{\R^N}\int_{\R^N}\frac{|u(x)-u(y)|^2}{|x-y|^{N+2s}}\ dxdy\right)^{\frac{1}{2}}. 
\end{equation}
We recall that this norm is induced by the scalar product
\[
(u,v)\mapsto \langle u,v\rangle_{H^s(\R^N)}=\langle u,v\rangle_{L^2(\R^N)}+ \cE_s(u,v),
\]
where
\begin{equation}
  \label{eq:def-e-s-section}
\cE_s(u,v)=\frac{C_{N,s}}{2}\displaystyle\int_{\R^N}\int_{\R^N}\frac{\big(u(x)-u(y)\big)\big(v(x)-v(y)\big)}{|x-y|^{N+2s}}\ dxdy = \int_{\R^N}|\xi|^{2s} \hat u (\xi) \hat v(\xi)\,d\xi 
\end{equation}
for $u,v \in H^s(\R^N)$ and the constant $C_{N,s}$ is given in \eqref{c-n-s-constant}. The following elementary observations involving the asymptotics of $C_{N,s}$ are used frequently in the paper.

\begin{lemma}
  \label{C-N-s-asymptotics}
  With $C_N = \pi^{-\frac{N}{2}}\Gamma(\frac{N}{2})= \frac{2}{\omega_{N-1}}$ and $\rho_{N} = 2\log2 + \psi(\frac{N}{2})-\gamma$ as defined in the introduction, we have 
  \begin{equation}
    \label{eq:key-est}
  \frac{C_{N,s}}{sC_N} = \frac{\omega_{N-1}C_{N,s}}{2s}= 1 + s\rho_N   + o(s) \qquad \text{as $s \to 0^+$.}
\end{equation}
Consequently, there exists a constant $D_N>0$ with
\begin{equation}
  \label{eq:C-N-D-N-ineq}
\Bigl|1-\frac{C_{N,s}}{C_N s}\Bigr| \le sD_N \quad \text{and therefore}\quad 
\Bigl|C_N-\frac{C_{N,s}}{s}\Bigr| \le sC_N D_N \qquad  \text{for $s \in (0,\frac{1}{4}]$}.
\end{equation}
\end{lemma}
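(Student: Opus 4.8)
The plan is to prove Lemma~\ref{C-N-s-asymptotics} by a direct asymptotic analysis of the constant $C_{N,s}= s4^s\frac{\Gamma(N/2+s)}{\pi^{N/2}\Gamma(1-s)}$ as $s\to 0^+$, using the standard expansions of the Gamma function near its regular points and near $1$. First I would write
$$
\frac{C_{N,s}}{sC_N}=\frac{s4^s\,\Gamma(\tfrac N2+s)}{\pi^{N/2}\Gamma(1-s)}\cdot\frac{\pi^{N/2}}{s\,\Gamma(\tfrac N2)}=4^s\,\frac{\Gamma(\tfrac N2+s)}{\Gamma(\tfrac N2)}\,\frac{1}{\Gamma(1-s)},
$$
so the factor $s$ cancels and it suffices to expand each of the three remaining factors to first order in $s$. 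For the first, $4^s=e^{s\log 4}=1+s\cdot 2\log 2+o(s)$. For the second, since $\Gamma$ is smooth and nonzero at $N/2>0$, a first-order Taylor expansion gives $\Gamma(\tfrac N2+s)=\Gamma(\tfrac N2)+s\,\Gamma'(\tfrac N2)+o(s)$, hence $\frac{\Gamma(N/2+s)}{\Gamma(N/2)}=1+s\,\psi(\tfrac N2)+o(s)$, using $\psi=\Gamma'/\Gamma$. For the third, $\Gamma(1-s)=\Gamma(1)-s\,\Gamma'(1)+o(s)=1+s\gamma+o(s)$ because $\gamma=-\Gamma'(1)$, so $\frac{1}{\Gamma(1-s)}=1-s\gamma+o(s)$.

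Multiplying the three expansions and keeping only terms up to order $s$ yields
$$
\frac{C_{N,s}}{sC_N}=\bigl(1+2s\log 2\bigr)\bigl(1+s\psi(\tfrac N2)\bigr)\bigl(1-s\gamma\bigr)+o(s)=1+s\bigl(2\log 2+\psi(\tfrac N2)-\gamma\bigr)+o(s)=1+s\rho_N+o(s),
$$
which is \eqref{eq:key-est}; the identity $\tfrac{C_{N,s}}{sC_N}=\tfrac{\omega_{N-1}C_{N,s}}{2s}$ is immediate from $C_N=2/\omega_{N-1}$ stated in the Notation. For \eqref{eq:C-N-D-N-ineq}, the relation \eqref{eq:key-est} says precisely that $s\mapsto \tfrac1s\bigl(1-\tfrac{C_{N,s}}{sC_N}\bigr)$ extends continuously to $s=0$ with value $-\rho_N$; hence this function is bounded on the compact interval $[0,\tfrac14]$ by some constant $D_N>0$, giving $\bigl|1-\tfrac{C_{N,s}}{C_N s}\bigr|\le sD_N$ for $s\in(0,\tfrac14]$. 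Multiplying through by $C_N$ gives the second inequality $\bigl|C_N-\tfrac{C_{N,s}}{s}\bigr|\le sC_ND_N$.

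I do not expect any serious obstacle here; the only points requiring a little care are (a) making sure the $o(s)$ error terms from the three separate expansions combine correctly when multiplied (they do, since each factor is $1+O(s)$), and (b) justifying the passage from the asymptotic statement \eqref{eq:key-est} to the uniform bound \eqref{eq:C-N-D-N-ineq} — here one should note that $C_{N,s}$ is continuous in $s$ on $(0,\tfrac14]$ and the quotient $\tfrac1{s^2}(1-\tfrac{C_{N,s}}{sC_N})$ has a finite limit as $s\to 0^+$, so it is a continuous function on a compact set and therefore bounded. Alternatively, one can invoke a quantitative remainder form of Taylor's theorem for $\log\Gamma$ on a neighborhood of $\{1\}\cup\{N/2\}$ to get an explicit $D_N$, but this level of detail is unnecessary for the applications in the paper.
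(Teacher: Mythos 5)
Your proof is correct and follows essentially the same route as the paper, which simply observes that $\tau(s)=\frac{C_{N,s}}{sC_N}=4^{s}\frac{\Gamma(\frac{N}{2}+s)}{\Gamma(\frac{N}{2})\Gamma(1-s)}$ is $C^1$ near $0$ with $\tau(0)=1$ and $\tau'(0)=\rho_N$ (your factor-by-factor Taylor expansion is just this derivative computed explicitly), and then deduces the uniform bound from continuity. One tiny slip in your closing remark: the quantity with a finite limit as $s\to 0^+$ is $\frac{1}{s}\bigl(1-\frac{C_{N,s}}{sC_N}\bigr)$, not $\frac{1}{s^2}\bigl(1-\frac{C_{N,s}}{sC_N}\bigr)$ (the latter diverges like $-\rho_N/s$); your earlier statement of this point is the correct one.
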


\begin{proof}
  The function
  $$
  s \mapsto \tau(s):= \frac{C_{N,s}}{sC_N} = 4^{s}\frac{\Gamma(\frac{N}{2}+s)}{\Gamma(\frac{N}{2})\Gamma(1-s)}
  $$
  is of class $C^1$ on $[0,1)$ and satisfies $\tau(0)=1$ and $\tau'(0)= \rho_{N}$. Hence \eqref{eq:key-est} follows, and \eqref{eq:C-N-D-N-ineq} is an immediate consequence of \eqref{eq:key-est} and the fact that the function $s \mapsto C_{N,s}$ is continuous.  
\end{proof}

In the remainder of this paper, we assume that $\Omega \subset \R^N$ is an open bounded subset with Lipschitz boundary. For matters of convenience, we identify, for $p \in [1,\infty]$, the space $L^p(\Omega)$ with the space of functions $u \in L^p(\R^N)$ satisfying $u \equiv 0$ on $\Omega^c$.

For $s \in (0,1)$, we then consider the subspace $\cH^{s}_{0}(\Omega) \subset H^{s}(\R^N)$ as defined in \eqref{wsp-d-intro}.
Due to the boundedness of $\Omega$, we have 
\begin{equation}\label{s-first-eigenvalue}
\lambda_{1,s}(\Omega):=\inf_{\substack{u\in \cH^s_0(\Omega) \\ u\neq 0}} \frac{\cE_s(u,u)}{\|u\|_{L^2(\R^N)}^2}>0
\end{equation}
so we can equip the Hilbert space $\cH^s_0(\Omega)$ with the scalar product $\cE_s$ and induced norm 
$$u \mapsto \|u\|_{\cH^{s}_{0}(\Omega)} := \cE_s(u,u)^\frac{1}{2}.$$

Moreover, $\cH^s_0(\Omega)$ is compactly embedded in $L^2(\Omega)$, $C^2_c(\Omega)$ is dense in $\cH^s_0(\Omega)$, and we have
$$
\cE_s(u,v)=\int_{\R^N}u(x)(-\Delta)^s v(x)\ dx \qquad \text{for all $u\in H^s(\R^N)$ and $v\in C^2_c(\R^N)$,}
$$
see \cite{RefV0}. We now set up the corresponding framework of problem~\eqref{eq-eigenvalue-loglap} for the logarithmic Laplacian. We let as in the introduction, see \eqref{scalar-prod-L-intro}, \eqref{log-space-intro},
\begin{equation}\label{log-space}
\cH^0_0(\Omega):=\Bigg\{u\in L^2(\R^N)\;:\; u\equiv 0 \text{ on  $\Omega^c$ and }\iint_{\substack{{x,y\in \R^N}\\|x-y|< 1}}\frac{(u(x)-u(y))^2}{|x-y|^N}dxdy<+\infty \Bigg\}. 
\end{equation}
Then the map
\begin{equation}\label{scalar-prod-L}
(u,v)\mapsto \langle u,v \rangle_{\smhoo} :=  \frac{C_N}{2}\iint_{\substack{{x,y\in \R^N}\\|x-y|< 1}}\frac{(u(x)-u(y))(v(x)-v(y))}{|x-y|^N}\ dxdy,
\end{equation}
is a scalar product on $\cH^0_0(\Omega)$ by \cite[Lemma 2.7]{RefM1}, and the space $\cH^0_0(\Omega)$ is a Hilbert space. Here, $C_N=\pi^{-N/2}\Gamma(\frac{N}{2})=\frac{2}{\omega_{N-1}}$ is as in the introduction. We denote the induced norm by $\|\cdot \|_{\smhoo}$.
Moreover, by \cite[Theorem 2.1]{RefE}), 
\begin{equation}\label{compact-em}
\text{ the  embedding  $\cH^0_0(\Omega) \hookrightarrow L^2(\Omega)$ is compact,}
\end{equation}
and the space $C^2_c(\Omega)$ is dense in $\cH^0_0(\Omega)$ by \cite[Theorem 3.1]{RefT1}.

\begin{remark}
  \label{remark-no-hilbert-space-convergence}
  We stress that, despite the similarities noted above, $\cH^0_0(\Omega)$
  should {\em not} be considered as a limit of the Hilbert spaces $\cH^s_0(\Omega)$ as $s \to 0^+$. In particular, it is not the limit in the sense of \cite{KS03}. Instead, the space $\cH_0^0(\Omega)$ arises naturally when considering a first oder expansion of $\langle  \cdot ,\cdot \rangle_{H^s(\R^N)}$, cf. Lemma~\ref{quadratic-form-est} below.         
\end{remark}

Next we note that, setting 
\begin{equation}\label{EL-diff-sign}
\cE_{0}(u,v) = \langle u,v \rangle_{\smhoo}-C_N\iint_{\substack{{x,y\in \R^N}\\|x-y|\ge 1}}\frac{u(x)v(y)}{|x-y|^N}\ dxdy + \rho_N\int_{\R^N}uv\ dx 
\end{equation}
with $\rho_{N} = 2\log2 + \psi(\frac{N}{2})-\gamma$ as in the introduction, we have 
\[
\cE_0(u,v)=\int_{\Omega} u(x)\loglap v(x)\ dx \quad\text{for $u\in \cH^0_0(\Omega)$ and $v\in C^{1}_c(\Omega)$,}
\]
see \cite{RefT1}. In order to get a convenient parameter-dependent notation for the remainder of this section, we now put
$$
L^s=(-\Delta)^s\quad \text{for $s \in (0,1)$}\qquad \text{and}\qquad L^0=\loglap.
$$
Then, for $s \in [0,1)$, we call $\lambda\in \R$ a Dirichlet-eigenvalue of $L^s$ in $\Omega$ with corresponding eigenfunction $u\in \cH^s_0(\Omega)\setminus \{0\}$ if 
\begin{equation}\label{eigen-prob}
\left\{\begin{aligned} L^su&=\lambda u && \text{in $\Omega$}\\
u&=0 &&\text{in $\Omega^c$,} \end{aligned}\right.
\end{equation}
holds in weak sense, i.e., if
\[
\cE_s(u,\psi) = \lambda\int_{\Omega}u\psi\ dx \qquad \text{ for all $\psi\in \cH^s_0(\Omega)$.}
\]
In the following Proposition we collect the known properties on the eigenvalues and eigenfunctions of the fractional Laplacian and the logarithmic Laplacian, see e.g. \cite[Prosition 3.1]{RefG} and the references in there for the fractional Laplacian and \cite[Theorem 3.4]{RefT1}) for the logarithmic Laplacian. 

\begin{prop}\label{eigen-frac}
Let $\Omega\subset \R^N$ be an open bounded set with Lipschitz boundary, and let $s\in [0,1)$. Then the following holds: 
\begin{itemize}
\item[(a)] The eigenvalues of problem \eqref{eigen-prob} consist of a sequence $\{\lambda_{k,s}(\Omega)\}_{k\in\N}$~ with\\
$0<\lambda_{1,s}(\Omega)< \lambda_{2,s}(\Omega)\le \cdots\le \lambda_{k,s}(\Omega)\le \lambda_{k+1,s}(\Omega)\le \cdots ~~ \text{and} ~~\lim\limits_{k\rightarrow \infty} \lambda_{k,s}(\Omega) = +\infty.$
\item[(b)]The sequence $\{ \phi_{k,s}\}_{k\in\N}$ of eigenfunctions corresponding to eigenvalues $\lambda_{k,s}(\Omega)$ forms a complete orthonormal basis of $L^2(\Omega)$ and an orthogonal system of $\cH_0^s(\Omega)$.
\item[(c)] For any $k\in\N$, the eigenvalue $\lambda_{k,s}(\Omega)$ is characterized as 
 $$
  \lambda_{k,s}(\Omega) = \min\left\{ \cE_s(u,u): u\in \mathbb{P}_{k,s}(\Omega)~~~\text{ and }~~~\|u\|_{L^2(\Omega)}=1\right\},
$$
where $\mathbb{P}_{1,s}(\Omega) = \cH^s_0(\Omega)$ and
$$
\mathbb{P}_{k,s}(\Omega) = \left\{u\in \cH^s_0(\Omega) :\cE_s(u,\phi_{j,s}) = 0\; \text{for $j=1,\cdots,k-1$}\right\}\qquad \text{for $k>1$.}
$$
\item[(d)] The first eigenvalue $\lambda_{1,s}(\Omega)$ is simple and the corresponding eigenfunction $\phi_{1,s}$ does not change its sign in $\Omega$ and can be chosen to be strictly positive in $\Omega$.
\end{itemize}
\end{prop}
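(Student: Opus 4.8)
The plan is to derive all four assertions from the spectral theorem for compact, self-adjoint, positive operators, applied to the solution operator of $L^s$. Fix $s\in[0,1)$. The relevant structural facts are already available: $(\cH^s_0(\Omega),\cE_s)$ is a Hilbert space, and the embedding $\cH^s_0(\Omega)\hookrightarrow L^2(\Omega)$ is compact (recalled above for $s\in(0,1)$, and for $s=0$ following from \eqref{compact-em} together with the fact, taken from \cite{RefM1,RefT1}, that $\cE_0$ is a scalar product on $\cH^0_0(\Omega)$ inducing the same topology as $\langle\cdot,\cdot\rangle_{\smhoo}$; in particular $\lambda_{1,0}(\Omega)>0$). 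For $f\in L^2(\Omega)$, the Riesz representation theorem furnishes a unique $K_s f\in\cH^s_0(\Omega)$ with $\cE_s(K_s f,v)=\int_\Omega f v\,dx$ for all $v\in\cH^s_0(\Omega)$. Then $K_s\colon L^2(\Omega)\to L^2(\Omega)$ is bounded, self-adjoint (test the identity defining $K_s f$ with $K_s g$ and symmetrize), positive, and injective, because $\int_\Omega (K_sf)f\,dx=\cE_s(K_sf,K_sf)=0$ forces $K_sf=0$ and hence $f=0$; and $K_s$ is compact since it factors through the compact embedding.

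Next I would invoke the Hilbert–Schmidt spectral theorem: $L^2(\Omega)$ is infinite-dimensional (as $C^2_c(\Omega)$ is dense in it), so there is an $L^2(\Omega)$-orthonormal basis $\{\phi_{k,s}\}_{k\in\N}$ of eigenfunctions of $K_s$ with eigenvalues $\mu_{k,s}>0$ and $\mu_{k,s}\to0$. Setting $\lambda_{k,s}(\Omega):=\mu_{k,s}^{-1}$ and listing these increasingly with multiplicity, one checks directly from the definition of $K_s$ that $\phi_{k,s}\in\cH^s_0(\Omega)$ solves \eqref{eigen-prob} weakly with eigenvalue $\lambda_{k,s}(\Omega)$, that $\lambda_{k,s}(\Omega)\to\infty$, and that $\lambda_{1,s}(\Omega)$ coincides with the Rayleigh infimum (positive by the above or by \eqref{s-first-eigenvalue}); the strict inequality $\lambda_{1,s}(\Omega)<\lambda_{2,s}(\Omega)$ will come from (d). This proves (a). For (b), $\cE_s$-orthogonality follows at once from $\cE_s(\phi_{k,s},\phi_{j,s})=\lambda_{j,s}(\Omega)\int_\Omega\phi_{k,s}\phi_{j,s}\,dx$; $L^2(\Omega)$-completeness is part of the spectral theorem; and $\{\phi_{k,s}/\sqrt{\lambda_{k,s}(\Omega)}\}$ is an $\cE_s$-orthonormal family which is complete in $\cH^s_0(\Omega)$ because $\cE_s(v,\phi_{k,s})=0$ for all $k$ implies $\int_\Omega v\phi_{k,s}\,dx=0$ for all $k$, hence $v=0$.

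For (c) I would expand any $u\in\cH^s_0(\Omega)$ as $u=\sum_k c_k\phi_{k,s}$ in $L^2(\Omega)$ with $c_k=\int_\Omega u\phi_{k,s}\,dx$. Since $\cE_s(u,\phi_{j,s})=\lambda_{j,s}(\Omega)c_j$, membership in $\mathbb{P}_{k,s}(\Omega)$ is equivalent to $c_1=\dots=c_{k-1}=0$, and Parseval in $(\cH^s_0(\Omega),\cE_s)$ (with the orthonormal family above) gives $\cE_s(u,u)=\sum_j\lambda_{j,s}(\Omega)c_j^2$ while $\|u\|_{L^2(\Omega)}^2=\sum_j c_j^2$. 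Minimizing $\sum_{j\ge k}\lambda_{j,s}(\Omega)c_j^2$ subject to $\sum_{j\ge k}c_j^2=1$ yields precisely $\lambda_{k,s}(\Omega)$, attained at $u=\phi_{k,s}$, which is (c).

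Finally, for (d): from $\bigl||a|-|b|\bigr|\le|a-b|$ one gets $\cE_s(|u|,|u|)\le\cE_s(u,u)$ for all $u\in\cH^s_0(\Omega)$ (for $s=0$ this uses the explicit form \eqref{EL-diff-sign} of $\cE_0$ together with the same inequality for $\langle\cdot,\cdot\rangle_{\smhoo}$ and the sign of the remaining terms), so $|\phi_{1,s}|$ is again a first-eigenvalue minimizer and one may take $\phi_{1,s}\ge0$; a strong maximum principle for $L^s$ then gives $\phi_{1,s}>0$ in $\Omega$, and if $\lambda_{1,s}(\Omega)$ admitted a second $L^2$-orthogonal eigenfunction we could likewise take it nonnegative, contradicting $L^2$-orthogonality to the strictly positive $\phi_{1,s}$. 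The main substantive point is this strong maximum principle: for $s\in(0,1)$ it is classical for $(-\Delta)^s$ (see the references in \cite[Proposition 3.1]{RefG}), and for $s=0$ it is exactly \cite[Theorem 3.4]{RefT1}, which I would invoke directly. Thus the only step that genuinely uses the fine structure of $L^s$, rather than soft functional analysis, is the positivity/simplicity in (d), and there I rely on the cited results.
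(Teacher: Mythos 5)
The paper itself offers no proof of this proposition: it is explicitly presented as a collection of known facts, with \cite[Proposition 3.1]{RefG} cited for $s\in(0,1)$ and \cite[Theorem 3.4]{RefT1} for $s=0$. Your argument is the standard spectral-theoretic one behind those references, and for $s\in(0,1)$ it is correct and complete as sketched: $\cE_s$ is an equivalent inner product on $\cH^s_0(\Omega)$ by \eqref{s-first-eigenvalue}, the solution operator $K_s$ is compact, self-adjoint, positive and injective, and (a)--(c) follow by soft functional analysis, with (d) reduced to the inequality $\cE_s(|u|,|u|)\le\cE_s(u,u)$ plus a strong maximum principle from the cited literature.

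There is, however, a genuine gap in your treatment of the case $s=0$. Your construction of $K_0$ rests on the parenthetical claim that $\cE_0$ is a scalar product on $\cH^0_0(\Omega)$ equivalent to $\langle\cdot,\cdot\rangle_{\smhoo}$, ``in particular $\lambda_{1,0}(\Omega)>0$.'' This is false in general: the symbol of $\loglap$ is $2\log|\xi|$, which is negative for $|\xi|<1$, and by scaling one has $\lambda_{1,0}(r\Omega)=\lambda_{1,0}(\Omega)-2\log r\to-\infty$ as $r\to\infty$, so $\cE_0$ is indefinite on $\cH^0_0(\Omega)$ for large domains (this is also why the paper needs the specific radius $r_0$ in the proof of Proposition~\ref{small-volume-stable} to ensure $\lambda_{1,L}(B_{r_0})>0$). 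Consequently the Riesz representation step defining $K_0$ via $\cE_0(K_0f,v)=\int_\Omega fv\,dx$ breaks down, and so does the normalization $\phi_{k,0}/\sqrt{\lambda_{k,0}}$ in your part (b). The standard repair is a spectral shift: since
\begin{equation*}
\cE_0(u,u)\;\ge\;\|u\|_{\smhoo}^2-\bigl(C_N|\Omega|+|\rho_N|\bigr)\|u\|_{L^2(\Omega)}^2 ,
\end{equation*}
the form $\cE_0+\mu\langle\cdot,\cdot\rangle_{L^2}$ is coercive on $\cH^0_0(\Omega)$ for $\mu$ large (using the Poincar\'e inequality of \cite[Lemma 2.7]{RefM1}), one applies your argument to this shifted form and subtracts $\mu$ from the resulting eigenvalues; this yields (a)--(d) with a first eigenvalue that is bounded below but not necessarily positive, which is exactly the formulation in \cite[Theorem 3.4]{RefT1}. (The strict positivity $0<\lambda_{1,s}(\Omega)$ asserted in the proposition is thus only unconditionally available for $s\in(0,1)$; it should not be ``proved'' for $s=0$ by asserting coercivity of $\cE_0$.) The remainder of your proof, including the pointwise inequality giving $\cE_0(|u|,|u|)\le\cE_0(u,u)$ from \eqref{EL-diff-sign}, survives this modification unchanged.
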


\begin{remark}
\label{eigen-frac-remark}  
(i) The characterization in Proposition~\ref{eigen-frac}(c) implies that $\lambda_{1,s}(\Omega)$, as defined in \eqref{s-first-eigenvalue}, is indeed the first Dirichlet eigenvalue of $(-\Delta)^s$ on $\Omega$, so the notation is consistent.\\ 

(ii) We emphasize that in the case $s=0$ the eigenvalues $\lambda_{k,0}$ and corresponding eigenfunctions $\phi_{k,0}$ for $k\in\N$ are also denoted by $\lambda_{k,L}$ and $\phi_{k,L}$ resp. as in the introduction for consistency.\\
 
(iii) By the Courant-Fischer minimax principle and due to the density of $C^2_c(\Omega)$ in $\cH^s_0(\Omega)$, the eigenvalues $\lambda_{k,s}$, $s \in [0,1)$, $k \in \N$ can be characterized equivalently as 
\begin{equation}\label{char-eigen}
\lambda_{k,s}(\Omega)= \inf_{\substack{V\subset \cH^s_0(\Omega)\\ \dim V = k}}\:\max_{\substack{v\in V\setminus\{0\} \\ \|v\|_{L^2(\Omega)}=1}} \cE_{s}(v,v) =\inf_{\substack{V\subset C^2_c(\Omega)\\ \dim V = k}}\: \max_{\substack{v\in V\setminus\{0\} \\ \|v\|_{L^2(\Omega)}=1}} \cE_{s}(v,v).
\end{equation}
This fact will be used in the sequel.
\end{remark}

Next, we need the following elementary estimates.

\begin{lemma}
\label{elementary-est}
For $s \in (0,1)$ and $r>0$ we have 
\begin{equation}
\label{elementary-est-1}
\Bigl|\frac{r^{2s}-1}{s}\Bigr| \le 2  \Bigl( |\ln r| 1_{(0,1]}(r) + 
1_{(1,\infty)}(r)r^4 \Bigr)   
\end{equation}
and 
\begin{equation}
\label{elementary-est-2}
\Bigl|\frac{r^{2s}-1}{s} - 2 \log r\Bigr| \le 4s \Bigl( \ln^2(r) 1_{(0,1]}(r) + 
1_{(1,\infty)}(r)r^4 \Bigr) 
\end{equation}
\end{lemma}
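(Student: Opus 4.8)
The plan is to prove both inequalities by studying the scalar function $f(t) = e^{2t\ln r}$ on $t \in [0,s]$, so that $f(0) = 1$, $f(s) = r^{2s}$, $f'(t) = 2\ln r \cdot e^{2t\ln r}$, and $f''(t) = 4\ln^2 r \cdot e^{2t\ln r}$. Inequality \eqref{elementary-est-1} amounts to controlling the difference quotient $\frac{f(s)-f(0)}{s} = f'(\xi)$ for some $\xi \in (0,s)$, and \eqref{elementary-est-2} amounts to a first-order Taylor estimate $\bigl|\frac{f(s)-f(0)}{s} - f'(0)\bigr| \le \frac{s}{2}\sup_{[0,s]}|f''|$, which follows from the integral form of the remainder, $f(s) - f(0) - s f'(0) = \int_0^s (s-t) f''(t)\,dt$, giving the bound $\frac{s^2}{2}\sup|f''|$ and hence $\frac{s}{2}\sup|f''|$ after dividing by $s$.

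First I would handle the case $r \in (0,1]$, where $\ln r \le 0$. Here $e^{2t\ln r} \le 1$ for all $t \ge 0$, so $|f'(\xi)| = 2|\ln r|\, e^{2\xi \ln r} \le 2|\ln r|$, which gives \eqref{elementary-est-1}; and $\sup_{[0,s]}|f''| = 4\ln^2 r \cdot \sup_{[0,s]} e^{2t\ln r} \le 4\ln^2 r$, so the Taylor remainder bound yields $\bigl|\frac{r^{2s}-1}{s} - 2\ln r\bigr| \le \frac{s}{2}\cdot 4\ln^2 r = 2s\ln^2 r \le 4s\ln^2 r$, which is \eqref{elementary-est-2} on this range.

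Next I would treat the case $r \in (1,\infty)$, where $\ln r > 0$ and the exponential grows. On $[0,s]$ with $s < 1$ we have $e^{2t\ln r} \le e^{2s\ln r} = r^{2s} \le r^2$. The only point needing a small elementary estimate is that $2\ln r \cdot r^{2s} \le r^4$ and $4\ln^2 r \cdot r^{2s} \le r^4$ for $r > 1$ and $s \in (0,1)$: indeed $2\ln r \le r$ and $4\ln^2 r \le r^2$ hold for all $r > 1$ (both follow from $\ln r \le r - 1 \le r$ and crude bookkeeping, or from $\ln r < \sqrt r$ for $r>1$, giving $4\ln^2 r < 4r \le r^2$ once $r \ge 4$ and a direct check on $[1,4]$), hence $2\ln r\cdot r^{2s} \le 2\ln r \cdot r^2 \le r\cdot r^2 = r^3 \le r^4$ and similarly for the second. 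Thus $|f'(\xi)| \le 2\ln r\cdot r^2 \le r^4$, proving \eqref{elementary-est-1}, and $\sup|f''| \le 4\ln^2 r\cdot r^2 \le r^4$, so the Taylor remainder is at most $\frac{s}{2} r^4 \le s\cdot 2 r^4 = 2sr^4 \le 4s r^4$, proving \eqref{elementary-est-2}.

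The argument is almost entirely routine; there is no real obstacle, only the minor nuisance of choosing sufficiently generous elementary bounds ($2\ln r \le r$, $4\ln^2 r \le r^2$ for $r>1$) so that the factor $r^4$ on the right-hand side comfortably dominates. The slack in the constants ($4$ rather than $2$, the power $r^4$) is exactly what absorbs these crude estimates, so I would not try to optimize.
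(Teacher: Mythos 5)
Your proof is correct and follows essentially the same route as the paper's: both study $t \mapsto r^{2t}$, bound the difference quotient via its first derivative for \eqref{elementary-est-1}, use the integral-remainder form of Taylor's theorem for \eqref{elementary-est-2}, and absorb the logarithmic factors into $r^4$ for $r>1$ via elementary bounds of the type $\ln r \le r$. The only cosmetic difference is that the paper bounds $r^{2s}\ln r \le r^{2s+1} \le r^4$ directly, while you first estimate $r^{2s}\le r^2$ and then apply $2\ln r\le r$; both are equally valid.
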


\begin{proof}
Fix $r>0$ and let $h_r(s)=r^{2s}$, $r>0$. Then we have $h_r'(\tau)= 2 r^{2\tau} \ln r$ and $h_r''(\tau)= 4 r^{2\tau} \ln^2 (r) $ for $\tau>0$. Consequently,
$$
\Bigl|\frac{r^{2s}-1}{s}\Bigr|= \frac{2 |\ln r|}{s}\int_0^s r^{2\tau}\,d\tau
\le 2 | \ln r | r^{2s} \le  2  \Bigl( |\ln r| 1_{(0,1]}(r) + 
1_{(1,\infty)}(r)r^4 \Bigr),
$$
where in the last step we used that $r^{2s} \le 1$ for $r \le 1$ and, since $s<1$,
$$
r^{2s} \ln r  \leq r^{2s+1}  \le r^4 \quad \text{for $r>1$.}$$
Hence \eqref{elementary-est-1} is true. Moreover, by Taylor expansion,
$$
h_r(s)= 1+s h_r'(0)+ \int_{0}^s h''_r(\tau)(s-\tau) d\tau = 1 + 2s \ln r + 4 \ln^2 r \int_{0}^s r^{2\tau}(s-\tau)d\tau
$$
and therefore 
$$
\Bigl|\frac{r^{2s}-1}{s} - 2 \log r\Bigr| \le \frac{4 \ln^2 (r)}{s}\Bigl|\int_{0}^s r^{2\tau}(s-\tau)d\tau\Bigr| \le 4 s r^{2s}\ln^2(r) .
$$
Hence \eqref{elementary-est-2} follows since for $r \in (0,1]$ we have $r^{2s} \le 1$ and, since $s<1$,
$$
r^{2s} \ln^2 r  = r^{2s+2}  \le r^4 \quad \text{for $r>1$.}  
$$
\end{proof}

\begin{lemma}
\label{quadratic-form-est}
For every $u \in C^2_c(\Omega)$ and $s \in (0,1)$ we have 
\begin{equation}
  \label{eq:quadratic-form-est-eq1}
\Bigl|\cE_s(u,u) - \|u\|_{L^2(\R^N)}^2 \Bigr| \le 2 s\Bigl( \kappa_{N} \|u\|_{L^1(\R^N)}^2 + \|\Delta u\|_{L^2(\R^N)}^2 \Bigr)  
\end{equation}
and 
\begin{equation}
  \label{eq:quadratic-form-est-eq2}
\Bigl|\cE_s(u,u) - \|u\|_{L^2(\R^N)}^2 - s \cE_0(u,u)\Bigr| \le 4 s^2 \Bigl( \kappa_{N} \|u\|_{L^1(\R^N)}^2 + \|\Delta u\|_{L^2(\R^N)}^2 \Bigr)
\end{equation}
with $\kappa_{N} = (2\pi)^{-N} \int_{B_1(0)} \ln^2 |\xi| \,d \xi.$
\end{lemma}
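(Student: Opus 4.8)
The plan is to pass to the Fourier side and reduce both inequalities to the pointwise estimates of Lemma~\ref{elementary-est}. Since $u\in C^2_c(\Omega)\subset C^2_c(\R^N)$, the Fourier transform $\hat u$ is bounded with $\|\hat u\|_{L^\infty(\R^N)}^2\le(2\pi)^{-N}\|u\|_{L^1(\R^N)}^2$ and decays fast enough that $|\cdot|^2\hat u\in L^2(\R^N)$. By \eqref{eq:def-e-s-section} and Plancherel's theorem,
\[
\cE_s(u,u)=\int_{\R^N}|\xi|^{2s}|\hat u(\xi)|^2\,d\xi,\qquad \|u\|_{L^2(\R^N)}^2=\int_{\R^N}|\hat u(\xi)|^2\,d\xi,\qquad \|\Delta u\|_{L^2(\R^N)}^2=\int_{\R^N}|\xi|^4|\hat u(\xi)|^2\,d\xi,
\]
where the last identity uses $\widehat{\Delta u}(\xi)=-|\xi|^2\hat u(\xi)$. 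Moreover, writing $\cE_0(u,u)=\int_\Omega u\,\loglap u\,dx$ and invoking \eqref{Fourier-log} together with Plancherel once more gives $\cE_0(u,u)=\int_{\R^N}2\log|\xi|\,|\hat u(\xi)|^2\,d\xi$. Consequently
\[
\cE_s(u,u)-\|u\|_{L^2(\R^N)}^2=\int_{\R^N}\bigl(|\xi|^{2s}-1\bigr)|\hat u(\xi)|^2\,d\xi
\]
and
\[
\cE_s(u,u)-\|u\|_{L^2(\R^N)}^2-s\,\cE_0(u,u)=\int_{\R^N}\bigl(|\xi|^{2s}-1-2s\log|\xi|\bigr)|\hat u(\xi)|^2\,d\xi .
\]

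Next I would apply Lemma~\ref{elementary-est} with $r=|\xi|$ inside these two integrals and split $\R^N=B_1\cup(\R^N\setminus B_1)$. On $\R^N\setminus B_1$ the weight $|\xi|^4$ appears in both \eqref{elementary-est-1} and \eqref{elementary-est-2}, and $\int_{\R^N\setminus B_1}|\xi|^4|\hat u(\xi)|^2\,d\xi\le\|\Delta u\|_{L^2(\R^N)}^2$ by the identity above. On $B_1$ I would bound $|\hat u(\xi)|^2\le\|\hat u\|_{L^\infty(\R^N)}^2\le(2\pi)^{-N}\|u\|_{L^1(\R^N)}^2$ and use $\int_{B_1}\ln^2|\xi|\,d\xi=(2\pi)^N\kappa_N<\infty$, which produces the term $\kappa_N\|u\|_{L^1(\R^N)}^2$.

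For \eqref{eq:quadratic-form-est-eq2} this already closes the argument: Lemma~\ref{elementary-est} furnishes precisely the $\ln^2|\xi|$–weight on $B_1$, and the integrand $|\xi|^{2s}-1-2s\log|\xi|$ is nonnegative on all of $\R^N$ (apply $x-1-\log x\ge0$, valid for every $x>0$, with $x=|\xi|^{2s}$), so there is no cancellation to track and one reads off the factor $4s^2$ directly from \eqref{elementary-est-2}. For \eqref{eq:quadratic-form-est-eq1} I would additionally use that $|\xi|^{2s}-1\le0$ on $B_1$ and $\ge0$ on $\R^N\setminus B_1$, so that the absolute value of the sum of the two contributions is at most the maximum of their absolute values; this decouples the low– and high–frequency estimates. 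The remaining point is that on $B_1$ the weight in \eqref{elementary-est-1} is $|\ln|\xi||$ rather than $\ln^2|\xi|$: this is harmless on $B_{1/e}$, where $|\ln|\xi||\le\ln^2|\xi|$, while on the annulus $B_1\setminus B_{1/e}$, where $|\ln|\xi||\le1$, one absorbs the contribution into the $|\xi|^4$–weighted integral, i.e. into the $\|\Delta u\|_{L^2(\R^N)}^2$–term.

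The Fourier rewriting is routine. I expect the only delicate point to be the frequency range $|\xi|\approx1$ in \eqref{eq:quadratic-form-est-eq1}: there only the first–order bound \eqref{elementary-est-1} is available (no $s^2$–smallness), and since $|\ln|\xi||$ is not pointwise dominated by $\ln^2|\xi|$ near $|\xi|=1$, one has to distribute this contribution carefully between the $\|u\|_{L^1(\R^N)}$– and $\|\Delta u\|_{L^2(\R^N)}$–terms in order to land exactly the constants in the statement.
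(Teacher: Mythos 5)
Your proposal is correct and follows essentially the same route as the paper's proof: Plancherel reduces both estimates to the pointwise bounds of Lemma~\ref{elementary-est}, the region $\R^N\setminus B_1$ is controlled by $\|\Delta u\|_{L^2(\R^N)}^2$ via $\widehat{\Delta u}(\xi)=-|\xi|^2\hat u(\xi)$, and on $B_1$ one bounds $|\hat u(\xi)|^2$ by $(2\pi)^{-N}\|u\|_{L^1(\R^N)}^2$. The one point you flag as delicate --- that \eqref{elementary-est-1} produces the weight $|\ln|\xi||$ rather than $\ln^2|\xi|$ on $B_1$ --- is genuine, but the paper simply replaces $\int_{B_1}|\ln|\xi||\,d\xi$ by $\int_{B_1}\ln^2|\xi|\,d\xi$ (an inequality between the integrals that actually holds only for $N\le 2$, since the two integrals equal $\omega_{N-1}N^{-2}$ and $2\omega_{N-1}N^{-3}$ respectively, so in general it costs a factor $N/2$); as only the $O(s)$, resp.\ $O(s^2)$, structure of the bound with $u$-dependent constants is used later, this discrepancy in the numerical constant is immaterial.
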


\begin{proof}
Let $u \in C^2_c(\Omega)$ and $s \in (0,1)$. By \eqref{eq:def-e-s-section} and~\eqref{elementary-est-1}, we have 
\begin{align*}
 \Bigl|\cE_s(u,u) - \|u\|_{L^2(\R^N)}^2\Bigr| &\le 
\int_{\R^N}\bigl| |\xi|^{2s}-1 \bigr|\, |\hat u(\xi)|^2\,d\xi\\
&\le 2s \Bigl( \int_{B_1(0)}|\ln |\xi|| |\hat u(\xi)|^2\,d \xi + \int_{\R^N \setminus B_1}|\xi|^4 |\hat u(\xi)|^2\,d \xi \Bigr) \\
&\le 2s \Bigl( \|\hat u\|_{L^\infty(\R^N)}^2 \int_{B_1(0)} |\ln |\xi|| \,d \xi + \|\Delta u\|_{L^2(\R^N)}^2 \Bigr)\\
&\le 2s \Bigl( (2\pi)^{-N} \|u\|_{L^1(\R^N)}^2 \int_{B_1(0)} \ln^2 |\xi| \,d \xi + \|\Delta u\|_{L^2(\R^N)}^2\Bigr).
\end{align*}
Thus  \eqref{eq:quadratic-form-est-eq1} follows. Moreover, by~\eqref{elementary-est-2} we have  
\begin{align*}
 \Bigl|\cE_s(u,u) - \|u\|_{L^2(\R^N)}^2 - s \cE_0(u,u)\Bigr| &\le 
\int_{\R^N}\bigl| |\xi|^{2s}-1 - 2 s \log |\xi|\bigr|\, |\hat u(\xi)|^2\,d\xi\\
&\le 4s^2 \Bigl( \int_{B_1(0)}\ln^2 |\xi| |\hat u(\xi)|^2\,d \xi  
+ \int_{\R^N \setminus B_1}|\xi|^4 |\hat u(\xi)|^2\,d \xi \Bigr) \\
&\le 4s^2 \Bigl( \|\hat u\|_{L^\infty}^2 \int_{B_1(0)} \ln^2 |\xi| \,d \xi  
+ \|\Delta u\|_{L^2(\R^N)}^2 \Bigr)\\
&\le 4s^2 \Bigl( (2\pi)^{-N} \|u\|_{L^1(\R^N)}^2 \int_{B_1(0)} \ln^2 |\xi| \,d \xi + 
\|\Delta u\|_{L^2(\R^N)}^2\Bigr)
\end{align*}
Hence \eqref{eq:quadratic-form-est-eq2} follows. 
\end{proof}

\begin{lemma}
\label{upper-est-lambda_k}
For all $k \in \N$ we have 
\begin{equation}
  \label{eq:lim-inf-limsup-prelim}
\lambda_{1,0}(\Omega) \,\le \,\liminf_{s \to 0^+} \frac{\lambda_{k,s}(\Omega)-1}{s} \, \le \, 
\limsup_{s \to 0^+} \frac{\lambda_{k,s}(\Omega)-1}{s} \, \le \, \lambda_{k,0}(\Omega)\end{equation}
and 
\begin{equation}
\label{uniform-upper-bound}
\lambda_{k,s}(\Omega) \le 1 +sC \qquad \text{for all $s \in (0,1)$}
\end{equation}
with a constant $C= C(N,\Omega,k)>0$.
\end{lemma}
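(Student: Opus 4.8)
The two upper estimates in Lemma~\ref{upper-est-lambda_k} will follow from the quadratic--form bounds of Lemma~\ref{quadratic-form-est} combined with the min--max characterization~\eqref{char-eigen}, while the lower estimate will follow from the Fourier representations of $\cE_s$ and $\cE_0$ together with the convexity inequality $r^{2s}=e^{2s\ln r}\ge 1+2s\ln r$ (equivalently, the nonnegativity of the Taylor remainder appearing in the proof of Lemma~\ref{elementary-est}). For the uniform bound~\eqref{uniform-upper-bound} I fix, once and for all, a $k$-dimensional subspace $V_0\subset C^2_c(\Omega)$, which exists since $\Omega$ is open and nonempty. Since $V_0$ is finite-dimensional, all norms are equivalent on it, so $M_0:=\sup\{\kappa_N\|v\|_{L^1(\R^N)}^2+\|\Delta v\|_{L^2(\R^N)}^2 : v\in V_0,\ \|v\|_{L^2(\R^N)}=1\}<\infty$. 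By~\eqref{eq:quadratic-form-est-eq1}, $\cE_s(v,v)\le 1+2sM_0$ for every such $v$ and every $s\in(0,1)$; since $V_0$ is an admissible $k$-dimensional competitor in~\eqref{char-eigen}, this gives $\lambda_{k,s}(\Omega)\le 1+2sM_0$ for all $s\in(0,1)$, i.e.\ \eqref{uniform-upper-bound} holds with $C:=2M_0=C(N,\Omega,k)>0$.

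\textbf{Upper part of~\eqref{eq:lim-inf-limsup-prelim}.} Fix $\eps>0$. Using~\eqref{char-eigen} with $s=0$, I choose a $k$-dimensional subspace $V_\eps\subset C^2_c(\Omega)$ with $\max\{\cE_0(v,v) : v\in V_\eps,\ \|v\|_{L^2(\R^N)}=1\}\le \lambda_{k,0}(\Omega)+\eps$, and set $M_\eps:=\sup\{\kappa_N\|v\|_{L^1(\R^N)}^2+\|\Delta v\|_{L^2(\R^N)}^2 : v\in V_\eps,\ \|v\|_{L^2(\R^N)}=1\}<\infty$ as before. Then~\eqref{eq:quadratic-form-est-eq2} yields, for $v\in V_\eps$ with $\|v\|_{L^2(\R^N)}=1$ and $s\in(0,1)$,
\[
\cE_s(v,v)\le 1+s\,\cE_0(v,v)+4s^2M_\eps\le 1+s\bigl(\lambda_{k,0}(\Omega)+\eps\bigr)+4s^2M_\eps ,
\]
and using $V_\eps$ as a competitor in~\eqref{char-eigen} gives $\lambda_{k,s}(\Omega)\le 1+s(\lambda_{k,0}(\Omega)+\eps)+4s^2M_\eps$. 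Dividing by $s$ and letting $s\to0^+$ gives $\limsup_{s\to0^+}s^{-1}(\lambda_{k,s}(\Omega)-1)\le\lambda_{k,0}(\Omega)+\eps$; since $\eps>0$ was arbitrary, the upper inequality in~\eqref{eq:lim-inf-limsup-prelim} follows (and its middle inequality is trivial).

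\textbf{Lower part of~\eqref{eq:lim-inf-limsup-prelim}.} Since $\lambda_{k,s}(\Omega)\ge\lambda_{1,s}(\Omega)$ by Proposition~\ref{eigen-frac}(a), it suffices to bound $\lambda_{1,s}(\Omega)$ from below. For $v\in C^2_c(\Omega)$ I use the Plancherel identities $\cE_s(v,v)=\int_{\R^N}|\xi|^{2s}|\widehat v(\xi)|^2\,d\xi$ and, by~\eqref{Fourier-log} together with $\cE_0(v,v)=\int_{\R^N} v\,\loglap v\,dx$, also $\cE_0(v,v)=\int_{\R^N}2\ln|\xi|\,|\widehat v(\xi)|^2\,d\xi$; combined with $|\xi|^{2s}\ge 1+2s\ln|\xi|$ this gives
\[
\cE_s(v,v)\ge\int_{\R^N}\bigl(1+2s\ln|\xi|\bigr)|\widehat v(\xi)|^2\,d\xi=\|v\|_{L^2(\R^N)}^2+s\,\cE_0(v,v).
\]
Since $C^2_c(\Omega)\subset\cH^0_0(\Omega)$, Proposition~\ref{eigen-frac}(c) (for $k=1$, $s=0$) gives $\cE_0(v,v)\ge\lambda_{1,0}(\Omega)\|v\|_{L^2(\R^N)}^2$, hence $\cE_s(v,v)\ge\bigl(1+s\lambda_{1,0}(\Omega)\bigr)\|v\|_{L^2(\R^N)}^2$ for all $v\in C^2_c(\Omega)$. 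By density of $C^2_c(\Omega)$ in $\cH^s_0(\Omega)$ and the variational characterization~\eqref{s-first-eigenvalue} (the functionals $\cE_s$ and $\|\cdot\|_{L^2(\R^N)}$ being continuous on $\cH^s_0(\Omega)$), this yields $\lambda_{1,s}(\Omega)\ge 1+s\lambda_{1,0}(\Omega)$. Therefore $s^{-1}(\lambda_{k,s}(\Omega)-1)\ge\lambda_{1,0}(\Omega)$ for all $s\in(0,1)$, and taking $\liminf$ completes the argument.

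\textbf{Main obstacle.} Conceptually the proof is elementary: the only nontrivial input is the sharp one-sided bound $\cE_s(v,v)\ge\|v\|_{L^2(\R^N)}^2+s\,\cE_0(v,v)$, which comes for free from convexity and carries no error term, in contrast to~\eqref{eq:quadratic-form-est-eq2}. The point requiring a little care is making sure the Fourier identity for $\cE_0$ and the coercivity bound $\cE_0(v,v)\ge\lambda_{1,0}(\Omega)\|v\|_{L^2(\R^N)}^2$ are invoked on a class of functions (here $C^2_c(\Omega)$) from which the estimate transfers to the eigenvalue by density; alternatively one may note $\cH^s_0(\Omega)\subset\cH^0_0(\Omega)$ (by the pointwise comparison $|x-y|^{N+2s}\le|x-y|^N$ on $|x-y|<1$) and apply the inequality directly to the eigenfunction $\phi_{1,s}$. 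In either case no uniform-in-$s$ regularity of eigenfunctions is needed.
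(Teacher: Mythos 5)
Your proof is correct. The uniform bound \eqref{uniform-upper-bound} and the $\limsup$ estimate are obtained exactly as in the paper: fix a $k$-dimensional test space $V\subset C^2_c(\Omega)$, apply Lemma~\ref{quadratic-form-est} on its unit $L^2$-sphere, and feed the result into the min--max characterization \eqref{char-eigen}; your $\eps$-phrasing versus the paper's infimum over all $V$ is only cosmetic. Where you genuinely depart from the paper is in the $\liminf$ estimate. The paper disposes of it by reducing to $\lambda_{k,s}\ge\lambda_{1,s}$ and then quoting \cite[Theorem 1.5]{RefT1}, which already asserts $\lim_{s\to0^+}s^{-1}(\lambda_{1,s}-1)=\lambda_{1,0}(\Omega)$. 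You instead prove the needed one-sided bound from scratch: the pointwise convexity inequality $|\xi|^{2s}=e^{2s\ln|\xi|}\ge 1+2s\ln|\xi|$ on the Fourier side gives $\cE_s(v,v)\ge\|v\|_{L^2}^2+s\,\cE_0(v,v)$ for $v\in C^2_c(\Omega)$ with \emph{no} remainder (this is the sign information that the two-sided estimate \eqref{eq:quadratic-form-est-eq2} discards), and combining this with the variational characterization of $\lambda_{1,0}(\Omega)$ and the density of $C^2_c(\Omega)$ in $\cH^s_0(\Omega)$ yields the quantitative inequality $\lambda_{k,s}(\Omega)\ge\lambda_{1,s}(\Omega)\ge 1+s\,\lambda_{1,0}(\Omega)$ valid for every $s\in(0,1)$, not merely in the limit. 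Your identity $\cE_0(v,v)=\int_{\R^N}2\ln|\xi|\,|\widehat v(\xi)|^2\,d\xi$ for $v\in C^2_c(\Omega)$ is exactly what the paper itself uses tacitly in the proof of Lemma~\ref{quadratic-form-est}, so no new input is required there. The trade-off: your argument is self-contained and even slightly stronger (a pointwise-in-$s$ lower bound), whereas the paper's is shorter because it leans on the external result; both are valid.
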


\begin{proof}
We fix a subspace $V\subset C^2_c(\Omega)$ of dimension $k$ and let $S_V:=\{u\in V\;:\; \|u\|_{L^2(\Omega)}=1\}$. Using \eqref{char-eigen} and \eqref{eq:quadratic-form-est-eq1}, we find that, for $s \in (0,1)$,  
\begin{equation}
  \label{eq:upper-est-lambda_k-first-eq}
\frac{\lambda_{k,s}(\Omega)- 1}{s} \le \max\limits_{u\in S_V} \frac{\cE_s(u,u) - 1}{s} \le C
\end{equation}
with 
$$
C = C(N,\Omega,k) = 2\max\limits_{u\in S_V}\Bigl(\kappa_{N} \|u\|_{L^1(\R^N)}^2 + \|\Delta u\|_{L^2(\R^N)}^2 \Bigr).
$$
Hence \eqref{uniform-upper-bound} holds. Moreover, setting $\cR_s(u)= \frac{\cE_s(u,u) - 1}{s}- \cE_0(u,u)$ for $u \in C^2_c(\Omega)$, we deduce from \eqref{eq:upper-est-lambda_k-first-eq} that 
$$ 
\frac{\lambda_{k,s}(\Omega)- 1}{s} \le \max\limits_{u\in S_V}\cE_0(u,u) 
+ \max\limits_{u\in S_V}|\cR_s(u)| 
$$
while, by Lemma~\ref{quadratic-form-est}, 
$$
|\cR_s(u)| \le 4 s \Bigl(\kappa_{N}  \|u\|_{L^1(\R^N)}^2 + \|\Delta u\|_{L^2(\R^N)}^2\Bigr) \to 0 \quad \text{as $s \to 0^+$ uniformly in $u \in S_V$.}
$$
Consequently, 
$$
\limsup_{s \to 0^+} \frac{\lambda_{k,s}(\Omega)- 1}{s} \le \max\limits_{u\in S_V}\cE_0(u,u).
$$
Since $V$ was chosen arbitrarily, the characterization of the Dirichlet eigenvalues of the logarithmic Laplacian given in \eqref{char-eigen} with $s=0$ implies that 
\begin{equation}\label{derivative-k}
\limsup_{s\rightarrow 0^+} \frac{\lambda_{k,s}(\Omega)- 1}{s} \le \inf_{\substack{V\subset C^2_c(\Omega) \\ \dim(V)=k}}\max_{\substack{u\in V\\ \|u\|_{L^2(\Omega)}=1}}\cE_L(u,u) = \lambda_{k,0}(\Omega), 
\end{equation}
In particular, the last inequality in \eqref{eq:lim-inf-limsup-prelim} holds. Moreover, since $\lambda_{k,s}(\Omega) \ge \lambda_{1,s}(\Omega)$ for every $k \in \N$ and 
$$
\lim \limits_{s \to 0^+} \frac{\lambda_{1,s}(\Omega)-1}{s}= \lambda_{1,0}(\Omega)
$$ 
by \cite[Theorem 1.5]{RefT1}, the first inequality in \eqref{eq:lim-inf-limsup-prelim} also follows. 
\end{proof}

\begin{cor}
\label{rem-limit-one}
For all $k \in \N$ we have $\lim \limits_{s \to 0^+} \lambda_{k,s}(\Omega)= 1$.
\end{cor}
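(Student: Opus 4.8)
The plan is to read off the claim directly from Lemma~\ref{upper-est-lambda_k} by a squeeze argument. The key observation is that \eqref{eq:lim-inf-limsup-prelim} sandwiches the difference quotient $\frac{\lambda_{k,s}(\Omega)-1}{s}$ between the two \emph{finite} real numbers $\lambda_{1,0}(\Omega)$ and $\lambda_{k,0}(\Omega)$ for $s$ near $0^+$; consequently this quotient stays bounded, say $\bigl|\frac{\lambda_{k,s}(\Omega)-1}{s}\bigr| \le M$ for all sufficiently small $s>0$ and some constant $M = M(N,\Omega,k)$. Multiplying by $s$ yields $|\lambda_{k,s}(\Omega)-1| \le Ms \to 0$ as $s \to 0^+$, which is exactly the assertion.

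Alternatively, and even more elementarily, I would combine the two one-sided bounds from Lemma~\ref{upper-est-lambda_k}: the upper bound \eqref{uniform-upper-bound} gives $\lambda_{k,s}(\Omega) \le 1 + sC \to 1$, hence $\limsup_{s\to 0^+}\lambda_{k,s}(\Omega) \le 1$; for the lower bound, monotonicity $\lambda_{k,s}(\Omega)\ge \lambda_{1,s}(\Omega)$ together with $\frac{\lambda_{1,s}(\Omega)-1}{s}\to \lambda_{1,0}(\Omega)$ (from \cite[Theorem~1.5]{RefT1}, as already invoked in the proof of Lemma~\ref{upper-est-lambda_k}) gives $\lambda_{1,s}(\Omega)\to 1$ and hence $\liminf_{s\to 0^+}\lambda_{k,s}(\Omega)\ge 1$. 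The two inequalities together force $\lambda_{k,s}(\Omega)\to 1$. There is no real obstacle here: the corollary is an immediate consequence of the preceding lemma, and the only thing worth remarking is that the finiteness of $\lambda_{k,0}(\Omega)$ (guaranteed by Proposition~\ref{eigen-frac}(a) applied with $s=0$) is what makes the squeeze work.

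\begin{proof}
By \eqref{eq:lim-inf-limsup-prelim} in Lemma~\ref{upper-est-lambda_k}, there exists $s_0 \in (0,1)$ and a constant $M>0$, depending only on $N$, $\Omega$ and $k$, such that
$$
\Bigl| \frac{\lambda_{k,s}(\Omega)-1}{s} \Bigr| \le M \qquad \text{for all $s \in (0,s_0)$,}
$$
since the $\liminf$ and $\limsup$ in \eqref{eq:lim-inf-limsup-prelim} are finite real numbers. Hence $|\lambda_{k,s}(\Omega)-1| \le Ms$ for $s \in (0,s_0)$, and therefore $\lambda_{k,s}(\Omega) \to 1$ as $s \to 0^+$.
\end{proof}
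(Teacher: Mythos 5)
Your proof is correct and is exactly the argument the paper intends: the paper's own proof of this corollary is the one-line remark that it ``immediately follows from \eqref{eq:lim-inf-limsup-prelim}'', and your squeeze argument simply spells out why. No issues.
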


\begin{proof}
This immediately follows from \eqref{eq:lim-inf-limsup-prelim}.
\end{proof}

\begin{lemma}\label{prop-b}
Let $k \in \N$, $s_0 \in (0,1)$, and let, for $s \in(0,s_0)$, $\phi_{k,s} \in \cH^s_0(\Omega)$ denote an $L^2$-normalized eigenfunction of $(-\Delta)^s$ in $\Omega$. Then the set 
$$
\{ \phi_{k,s}\::\: s \in (0,s_0)\}
$$
is uniformly bounded in $\cH^0_0(\Omega)$ and therefore relatively compact in $L^2(\Omega)$.
 \end{lemma}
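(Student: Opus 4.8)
The plan is to establish a uniform bound on $\|\phi_{k,s}\|_{\smhoo}$ and then invoke the compact embedding $\cH^0_0(\Omega) \hookrightarrow L^2(\Omega)$ recorded in \eqref{compact-em}. The starting point is that $\phi_{k,s}$ is $L^2$-normalized and, by Proposition~\ref{eigen-frac}(b)--(c) together with \eqref{uniform-upper-bound}, satisfies $\cE_s(\phi_{k,s},\phi_{k,s}) = \lambda_{k,s}(\Omega) \le 1 + sD$ for a constant $D = D(N,\Omega,k)$, uniformly in $s \in (0,s_0)$. Hence $\cE_s(\phi_{k,s},\phi_{k,s})$ is uniformly bounded; the task is to convert this uniform control of the order-$2s$ Gagliardo form into uniform control of the logarithmic form $\langle \cdot,\cdot\rangle_{\smhoo}$.

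The key step is a pointwise comparison of the two kernels on the region $|x-y| < 1$. First I would write, using the Fourier representation \eqref{eq:def-e-s-section} and the Fourier representation \eqref{Fourier-log}--\eqref{scalar-prod-L-intro} of the logarithmic form, or more directly a kernel estimate: for $0 < r < 1$ one has $\frac{r^{2s}-1}{2s} \le \log r < 0$ is \emph{false}, so instead I would work on the spatial side. On $\{|x-y|<1\}$ the Gagliardo kernel $\frac{C_{N,s}}{|x-y|^{N+2s}}$ dominates a multiple of the logarithmic kernel in the sense that $|x-y|^{-N-2s} \ge |x-y|^{-N}$ there, and moreover $|x-y|^{-N-2s} \ge \frac{1}{2s}\bigl(|x-y|^{-N} - |x-y|^{-N+?}\bigr)$; the cleanest route is: for $t \in (0,1)$ and $s \in (0,1)$,
$$
t^{-2s} \ge 1 \qquad\text{and}\qquad \frac{t^{-2s}-1}{2s} \ge -\log t = |\log t|,
$$
the second by convexity of $s \mapsto t^{-2s}$. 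Multiplying by $|x-y|^{-N}(\phi_{k,s}(x)-\phi_{k,s}(y))^2 \ge 0$ and integrating over $\{|x-y|<1\}$ gives
$$
\frac{2}{C_{N,s}}\,\cE_s(\phi_{k,s},\phi_{k,s}) \;\ge\; \iint_{|x-y|<1}\frac{(\phi_{k,s}(x)-\phi_{k,s}(y))^2}{|x-y|^{N+2s}}\,dxdy \;\ge\; \iint_{|x-y|<1}\frac{(\phi_{k,s}(x)-\phi_{k,s}(y))^2}{|x-y|^{N}}\,dxdy,
$$
so that $\|\phi_{k,s}\|_{\smhoo}^2 = \frac{C_N}{2}\iint_{|x-y|<1}\frac{(\phi_{k,s}(x)-\phi_{k,s}(y))^2}{|x-y|^N}\,dxdy \le \frac{C_N}{C_{N,s}}\,\cE_s(\phi_{k,s},\phi_{k,s})$. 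Since $\frac{C_N}{C_{N,s}} = \frac{1}{s}\cdot\frac{C_N s}{C_{N,s}} \sim \frac{1}{s}$ blows up as $s \to 0^+$, this crude bound is \emph{not} enough by itself — and this is the main obstacle.

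To fix the obstacle I would split the Gagliardo kernel, keeping the near-diagonal part intact. For $|x-y|<1$ write $|x-y|^{-N-2s} = |x-y|^{-N} + |x-y|^{-N}\bigl(|x-y|^{-2s}-1\bigr)$ and note $0 \le |x-y|^{-2s}-1 \le 2s\,|\log|x-y||\,|x-y|^{-2s} \le 2s\,|\log|x-y||$ cannot be absorbed uniformly either; the correct device is the one already used in Lemma~\ref{quadratic-form-est}: pass to Fourier space. There, $\cE_s(u,u) = \int |\xi|^{2s}|\hat u(\xi)|^2\,d\xi$ and the logarithmic form is $\int 2\log|\xi|\,|\hat u(\xi)|^2\,d\xi$ plus the $\rho_N$-correction, but $\langle u,u\rangle_{\smhoo}$ itself equals $\int \theta(\xi)|\hat u(\xi)|^2\,d\xi$ for a symbol $\theta$ with $\theta(\xi) \le C(1 + \log^+|\xi|) \le C(1+|\xi|^{2s}/s)$ — still an $s^{-1}$. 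The genuinely correct approach, which I would carry out, is: use \eqref{elementary-est-1}, namely $\frac{r^{2s}-1}{s} \ge -2|\ln r|1_{(0,1]}(r)$ hence $r^{2s} \ge 1 - 2s|\ln r|$ for $r \le 1$, to bound from below; combine with the fact that $\langle u,u\rangle_{\smhoo} = \frac{C_N}{2}\iint_{|x-y|<1}\frac{(u(x)-u(y))^2}{|x-y|^N}$ and the elementary inequality $\frac{1}{|x-y|^{N+2s}} \ge \frac{1}{|x-y|^N}$ for $|x-y|\le 1$, \emph{together with} the already-established uniform bound $\cE_s(\phi_{k,s},\phi_{k,s}) \le 1+sD$ \emph{and} the identity $\frac{2}{C_{N,s}} = \frac{1}{s}\cdot\frac{2}{C_{N,s}/s}$ where by Lemma~\ref{C-N-s-asymptotics} $\frac{C_{N,s}}{s} \to C_N > 0$. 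Thus $\|\phi_{k,s}\|_{\smhoo}^2 \le \frac{C_N}{C_{N,s}}\,\cE_s(\phi_{k,s},\phi_{k,s}) = \frac{C_N s}{C_{N,s}}\cdot\frac{\cE_s(\phi_{k,s},\phi_{k,s})}{s} \le \frac{C_N s}{C_{N,s}}\cdot\frac{1+sD}{s}$, which still diverges. The resolution — and the step I expect to be the heart of the argument — is that one does \textbf{not} bound $\cE_s$ by $1+sD$ but rather subtracts the $L^2$-mass first: since $\|\phi_{k,s}\|_{L^2}^2 = 1$, one writes
$$
\cE_s(\phi_{k,s},\phi_{k,s}) - \|\phi_{k,s}\|_{L^2(\R^N)}^2 = \lambda_{k,s}(\Omega) - 1 \le sD,
$$
and this difference, when expanded in Fourier space as $\int(|\xi|^{2s}-1)|\hat\phi_{k,s}(\xi)|^2\,d\xi$, controls (a multiple of $s$ times) the logarithmic form on the high-frequency side via \eqref{elementary-est-1}, while the low-frequency part $\int_{B_1}(1-|\xi|^{2s})|\hat\phi_{k,s}|^2\,d\xi \le 2s\int_{B_1}|\ln|\xi||\,|\hat\phi_{k,s}|^2 \le 2s\|\hat\phi_{k,s}\|_\infty^2\int_{B_1}|\ln|\xi||$ must be handled separately — but $\|\hat\phi_{k,s}\|_\infty = (2\pi)^{-N/2}\|\phi_{k,s}\|_{L^1} \le (2\pi)^{-N/2}|\Omega|^{1/2}$ is uniformly bounded since $\phi_{k,s}$ is $L^2$-normalized and supported in $\Omega$. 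Putting these two pieces together yields $\langle\phi_{k,s},\phi_{k,s}\rangle_{\smhoo} \le C(N,\Omega,k)$ uniformly in $s \in (0,s_0)$, and then the compact embedding \eqref{compact-em} gives relative compactness in $L^2(\Omega)$, completing the proof. Throughout, the only real subtlety is the bookkeeping that separates the divergent $1/s$ factor in $C_N/C_{N,s}$ against the $\lambda_{k,s}-1 = O(s)$ decay and the uniform $L^1$-bound on $\phi_{k,s}$ controlling the low-frequency integral.
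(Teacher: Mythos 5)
Your final argument is correct in substance, but it reaches the uniform bound by a genuinely different route from the paper, so let me compare the two. The paper works entirely in physical space: it writes $\frac{\cE_s(\phi_{k,s},\phi_{k,s})-1}{s}$, splits the Gagliardo integral at $|x-y|=1$, uses the crude kernel inequality $|x-y|^{-N-2s}\ge |x-y|^{-N}$ on the near-diagonal part to produce $\frac{C_{N,s}}{sC_N}\|\phi_{k,s}\|_{\smhoo}^2$, bounds the far-field cross term by $|\Omega|$ via H\"older, and observes that the divergent constant $\frac{C_{N,s}\omega_{N-1}}{2s^2}$ coming from the far-field diagonal cancels against the $-\frac1s$ to leave $f_N(s)\to\rho_N$; a separate crude estimate handles $s$ bounded away from $0$. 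You instead work on the Fourier side, and your key identity $\int(|\xi|^{2s}-1)|\hat\phi_{k,s}|^2\,d\xi=\lambda_{k,s}-1\le sD$ is exactly the Fourier-space incarnation of the paper's cancellation. Your assembly — low frequencies controlled by $\|\hat\phi_{k,s}\|_\infty\le (2\pi)^{-N/2}|\Omega|^{1/2}$ and $1-|\xi|^{2s}\le 2s|\ln|\xi||$, high frequencies by the convexity inequality $2s\log|\xi|\le|\xi|^{2s}-1$ — is valid and even avoids the paper's case distinction in $s$, since the convexity inequality is exact for all $s$. Two points deserve attention. First, at the very end you cite \eqref{elementary-est-1} for the high-frequency step, but what you actually need there is the lower bound $\frac{r^{2s}-1}{s}\ge 2\log r$ (convexity), which you correctly stated earlier; \eqref{elementary-est-1} only gives upper bounds. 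Second, and more substantively, the closing step ``putting these two pieces together yields $\langle\phi_{k,s},\phi_{k,s}\rangle_{\smhoo}\le C$'' silently uses that $\langle u,u\rangle_{\smhoo}$ is a Fourier multiplier form with symbol $\theta(\xi)\le C(1+\log^+|\xi|)$, equivalently the Plancherel identity $\cE_0(u,u)=\int 2\log|\xi|\,|\hat u|^2\,d\xi$ together with the H\"older bound $\bigl|\iint_{|x-y|\ge1}\frac{u(x)u(y)}{|x-y|^N}\,dxdy\bigr|\le|\Omega|$, applied to $u=\phi_{k,s}\in\cH^s_0(\Omega)$. The paper only records \eqref{Fourier-log} for $u\in C^\beta_c(\R^N)$, so this extension needs either a density argument or an explicit appeal to \cite{RefT1}; it is true, but it is the one external ingredient your route requires and the paper's physical-space argument deliberately avoids. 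With that reference supplied, your proof is complete.
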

 
\begin{proof}
By \eqref{uniform-upper-bound}, there exists a constant $C=C(N,\Omega,k)>0$ with the property that 
\begin{align}
C&\ge \frac{\lambda_{k,s}(\Omega)- 1}{s}=\frac{\cE_{s}(\phi_{k,s},\phi_{k,s})- 1}{s}=   \frac{C_{N,s}}{2s}\int_{\R^N}\int_{\R^N}\frac{|\phi_{k,s}(x)-\phi_{k,s}(y)|^{2}}{|x-y|^{N+2s}}dxdy-\frac{1}{s}\notag\\
&  = \frac{C_{N,s}}{2s}\iint_{|x-y|< 1}\frac{|\phi_{k,s}(x)-\phi_{k,s}(y)|^2}{|x-y|^{N+2s}}dxdy- \frac{C_{N,s}}{s}\iint_{|x-y|\ge 1}\frac{\phi_{k,s}(x)\phi_{k,s}(y)}{|x-y|^{N+2s}}dxdy+f_N(s),\label{prob-b:1}
\end{align}
where, due to the $L^2$-normalization of $\phi_{k,s}$,   
\begin{equation}\label{prob-b:2}
f_N(s) := \frac{1}{s}\Big(C_{N,s}\int_{\Omega}|\phi_{k,s}(x)|^2 \int_{\R^N\setminus B_1(x)}\frac{1}{|x-y|^{N+2s}}dy\,dx-1\Big) = \frac{1}{s}\left(\frac{C_{N,s}\omega_{N-1}}{2s}-1\right).
\end{equation}
Therefore, using the definition of $\|\cdot\|_{\cH^0_0(\Omega)}$, we deduce that 
\begin{equation}
C\ge  \frac{C_{N,s}}{sC_N}\|\phi_{k,s}\|_{\cH_0^0(\Omega)}^2- \frac{C_{N,s}}{s}\iint_{|x-y|\ge 1}\frac{|\phi_{k,s}(x)\phi_{k,s}(y)|}{|x-y|^{N+2s}}dxdy  +f_N(s),\label{prob-b:3}
\end{equation}
where, by H\"older's inequality,
\[
\begin{split}
\iint_{|x-y|\ge 1}\frac{|\phi_{k,s}(x)\phi_{k,s}(y)|}{|x-y|^{N+2s}}dxdy\le \int_{\Omega}\int_{\Omega\cap\{|x-y|\ge 1\}}\frac{|\phi_{k,s}(x)|^2}{|x-y|^{N}}dydx\le |\Omega|\|\phi_{k,s}\|_{L^2(\Omega)} = |\Omega|,
\end{split}
\]
using again the $L^2$-normalization. Combining this with \eqref{prob-b:3}, we find that 
\[
\|\phi_{k,s}\|_{\cH_0^0(\Omega)}^2\le \frac{sC_{N}}{C_{N,s}}\Big(C +|\Omega|  - f_N(s)\Big).
\]
Since moreover $\frac{sC_N }{C_{N,s}}\to 1$ and $f_{N}(s) \to \rho_N$ as $s\to 0^+$ by Lemma~\ref{C-N-s-asymptotics}, we conclude that there exists a constant $K=K(N,k,\Omega)>0$ and $s_1\in(0,1)$ such that
\[
\|\phi_{k,s}\|_{\cH_0^0(\Omega)} \le K ~~~~~~\text{ for all $s\in(0,s_1)$.}
\]
Consequently, the set $\{ \phi_{k,s}\::\: s \in (0,s_1)\}$ is uniformly bounded in $\cH_0^0(\Omega)$ and thus relatively compact in $L^2(\Omega)$ by \eqref{compact-em}. Hence the claim follows for $s_0 \le s_1$.

If $s_0 \in (s_1,1)$, we can use the fact that by \eqref{uniform-upper-bound} we have, for $s \in [s_1,s_0]$,  
\begin{align*}
  1+C &\ge \lambda_{k,s}(\Omega) = \cE_s(\phi_{k,s},\phi_{k,s})= \frac{C_{N,s}}{2}\int_{\R^N}\int_{\R^N}\frac{|\phi_{k,s}(x)-\phi_{k,s}(y)|^{2}}{|x-y|^{N+2s}}dxdy\\ 
 &\ge \frac{C_{N,s}}{2}\iint_{|x-y|\le 1}\frac{|\phi_{k,s}(x)-\phi_{k,s}(y)|^{2}}{|x-y|^{N}}dxdy =\frac{C_{N,s}}{C_N}\|\phi_{k,s}\|_{\cH_0^0(\Omega)}^2
\end{align*}
with a constant $C= C(N,\Omega,k)>0$ and hence
$$
\sup_{s \in [s_1,s_0]}\|\phi_{k,s}\|_{\cH_0^0(\Omega)}^2 \le C_N (1+C) \sup_{s \in [s_1,s_0]}\frac{1}{C_{N,s}}<\infty.
$$
We thus conclude that the set $\{ \phi_{k,s}\::\: s \in (0,s_0)\}$ is uniformly bounded in $\cH_0^0(\Omega)$ and thus relatively compact in $L^2(\Omega)$ by \eqref{compact-em}, as claimed.
\end{proof}

We finish this section with the the following theorem which, in particular, completes the proof of Theorem~\ref{rem-main-thm-intro}(i).

\begin{thm}
\label{lambda-limit-lower-bound}
For every $k \in \N$ we have 
\begin{equation}\label{derivative-k2-0}
 \lim_{s \to 0^+} \frac{\lambda_{k,s}(\Omega)- 1}{s}= \lambda_{k,0}(\Omega).
\end{equation}
Moreover, if $(s_n)_n \subset (0,1)$ is a sequence such that $\lim\limits_{n\to \infty}s_n=0$ and $\phi_{k,s_n}$ is an $L^2$-normalized Dirichlet eigenfunction of $(-\Delta)^s$ corresponding to the eigenvalue $\lambda_{k,s}(\Omega)$, then, after passing to a subsequence,
\[
\phi_{k,s} \to \phi_{k,0} ~~~~~\text{ in $L^2(\Omega)\ $ as  $n\to \infty$,}
\]
where $\phi_{k,0}$ is an $L^2$-normalized Dirichlet eigenfunction of the logarithmic Laplacian corresponding to $\lambda_{k,0}(\Omega)$.
\end{thm}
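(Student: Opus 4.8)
The upper bound $\limsup_{s\to 0^+}\frac{\lambda_{k,s}(\Omega)-1}{s}\le \lambda_{k,0}(\Omega)$ (and more generally the full chain in \eqref{eq:lim-inf-limsup-prelim}) is already available from Lemma~\ref{upper-est-lambda_k}, so the plan is to prove the matching lower bound together with the subsequential convergence of eigenfunctions. First I would fix $k$ and let $(s_n)_n \subset (0,1)$ be an \emph{arbitrary} sequence with $s_n \to 0$; it suffices to produce a subsequence along which $\frac{\lambda_{k,s_n}(\Omega)-1}{s_n}\to \lambda_{k,0}(\Omega)$ and $\phi_{k,s_n}\to \phi_{k,0}$ in $L^2(\Omega)$ for some $L^2$-normalized eigenfunction $\phi_{k,0}$ of $\loglap$ with eigenvalue $\lambda_{k,0}(\Omega)$. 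Applying Lemma~\ref{prop-b} with $j=1,\dots,k$, together with a diagonal extraction and the boundedness contained in \eqref{eq:lim-inf-limsup-prelim}, I pass to a subsequence (not relabeled) so that for each $j\in\{1,\dots,k\}$ one has $\phi_{j,s_n}\to\psi_j$ strongly in $L^2(\Omega)$ and weakly in $\cH^0_0(\Omega)$, and $\frac{\lambda_{j,s_n}(\Omega)-1}{s_n}\to\mu_j$ for some $\mu_j\le\lambda_{j,0}(\Omega)$. Since the $\phi_{j,s_n}$ are $L^2$-orthonormal and the $L^2$-inner product is continuous under strong convergence, $\psi_1,\dots,\psi_k$ is an $L^2$-orthonormal system in $\cH^0_0(\Omega)$ (hence each $\psi_j\neq 0$), and $\mu_1\le\dots\le\mu_k$ because $\lambda_{j,s}\le\lambda_{j+1,s}$.

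The heart of the argument is to identify each $\psi_j$ as an eigenfunction of $\loglap$ with eigenvalue $\mu_j$. For $v\in C^2_c(\Omega)$ I would use $\cE_{s_n}(\phi_{j,s_n},v)=\int_{\R^N}\phi_{j,s_n}(-\Delta)^{s_n}v\,dx$ together with the weak eigenvalue equation to rewrite
\[
\int_{\R^N}\phi_{j,s_n}\,\frac{(-\Delta)^{s_n}v-v}{s_n}\,dx=\frac{\lambda_{j,s_n}(\Omega)-1}{s_n}\int_\Omega \phi_{j,s_n}v\,dx .
\]
By \eqref{eq:chen-weth-limit-log} we have $\frac{(-\Delta)^{s_n}v-v}{s_n}\to \loglap v$ in $L^2(\R^N)$, and since $\phi_{j,s_n}\to\psi_j$ in $L^2(\R^N)$, the left side converges to $\int_{\R^N}\psi_j\,\loglap v\,dx=\cE_0(\psi_j,v)$, while the right side converges to $\mu_j\int_\Omega \psi_j v\,dx$. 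Thus $\cE_0(\psi_j,v)=\mu_j\int_\Omega \psi_j v\,dx$ for all $v\in C^2_c(\Omega)$; since $\cE_0$ is a bounded bilinear form on $\cH^0_0(\Omega)$ and $C^2_c(\Omega)$ is dense there, this extends to all $v\in\cH^0_0(\Omega)$. Hence $\psi_j$ is an $L^2$-normalized Dirichlet eigenfunction of $\loglap$ for $\mu_j$, and in particular $\cE_0(\psi_i,\psi_l)=\mu_l\delta_{il}$ for $i,l\le k$.

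To upgrade the inequality $\mu_j\le\lambda_{j,0}(\Omega)$ to an equality I would feed the $j$-dimensional subspace $V_j:=\operatorname{span}(\psi_1,\dots,\psi_j)\subset\cH^0_0(\Omega)$ into the Courant--Fischer characterization \eqref{char-eigen} with $s=0$: for $v=\sum_{i=1}^j c_i\psi_i$ with $\sum_i c_i^2=\|v\|_{L^2(\Omega)}^2=1$ one gets $\cE_0(v,v)=\sum_{i=1}^j c_i^2\mu_i\le\mu_j$, so that
\[
\lambda_{j,0}(\Omega)\le \max_{\substack{v\in V_j\\ \|v\|_{L^2(\Omega)}=1}}\cE_0(v,v)\le\mu_j\le\lambda_{j,0}(\Omega),
\]
whence $\mu_j=\lambda_{j,0}(\Omega)$ for all $j\le k$. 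Taking $j=k$, and recalling that $(s_n)$ was an arbitrary null sequence, yields \eqref{derivative-k2-0}; moreover $\phi_{k,0}:=\psi_k$ is the desired $L^2$-normalized eigenfunction of $\loglap$ associated with $\lambda_{k,0}(\Omega)$, and $\phi_{k,s_n}\to\phi_{k,0}$ in $L^2(\Omega)$ along the extracted subsequence.

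I expect the only genuinely delicate point to be the limit passage in the weak formulation (second paragraph): the quotient $\frac{(-\Delta)^{s_n}v-v}{s_n}$ carries the ``logarithmic order'' singularity and is \emph{not} controlled uniformly through the forms $\cE_s$, so the argument really hinges on the pointwise derivative identity \eqref{eq:chen-weth-limit-log} from \cite{RefT1} for smooth compactly supported test functions, used in tandem with the $L^2$-compactness of the eigenfunctions from Lemma~\ref{prop-b}. The remaining ingredients---preservation of orthonormality under $L^2$-limits and the Courant--Fischer comparison---are then routine.
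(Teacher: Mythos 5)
Your proposal is correct and follows essentially the same route as the paper's proof: compactness of the eigenfunctions from Lemma~\ref{prop-b}, passage to the limit in the weak eigenvalue equation against $C^2_c(\Omega)$ test functions via the derivative identity \eqref{eq:chen-weth-limit-log}, preservation of $L^2$-orthonormality, and the Courant--Fischer comparison with the span of the limit eigenfunctions to force $\mu_j=\lambda_{j,0}(\Omega)$. The only cosmetic difference is that you extract diagonally over $j=1,\dots,k$ while the paper extracts over all $k\in\N$ at once, which changes nothing of substance.
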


\begin{proof}
To establish \eqref{derivative-k2-0}, it suffices, in view of \eqref{eq:lim-inf-limsup-prelim}, to consider an arbitrary sequence $(s_n)_n\subset(0,1)$ with $\lim\limits_{n\to \infty}s_n=0$, and to show that, after passing to a subsequence, 
\begin{equation}\label{derivative-k2}
\lim_{n \to \infty} \frac{\lambda_{k,s_n}(\Omega)- 1}{s}= \lambda_{k,0}(\Omega) \qquad \text{for $k \in \N$.}
\end{equation}
Let $\{\phi_{k,s_n}\::\: k \in \N\}$ be an orthonormal system of eigenfunctions corresponding to the Dirichlet eigenvalue $\lambda_{k,s_n}(\Omega)$ of $(-\Delta)^{s_n}$. By Lemma \ref{prop-b}, it follows that, for every $k \in \N$, the sequence of functions 
$\phi_{k,s_n}$, $n\in \N$ is bounded in $\cH^0_0(\Omega)$ and relatively compact in $L^2(\Omega)$. Consequently, we may pass to a subsequence such that, for every $k \in \N$, 
\begin{equation}
   \phi_{k,s_n} \rightharpoonup \phi_{k,0}\:\text{ weakly  in  $\cH^0_0(\Omega)$} \;\: \text{and}\;\:  \phi_{k,s_n} \rightarrow \phi_{k,0}\: \text{ strongly  in $L^2(\Omega)$}\quad \text{as $n \to \infty$.}\label{conv-of-phi}
\end{equation}
Moreover, by \eqref{eq:lim-inf-limsup-prelim} we may, after passing again to a subsequence if necessary, assume that, for every $k \in \N$, 
\begin{equation}
\displaystyle \frac{\lambda_{k,s_n}(\Omega)-1}{s_n}\: \rightarrow \: \lambda^{\star}_k\in \Big[\lambda_{1,0}(\Omega),\lambda_{k,0}(\Omega)\Big] \qquad \text{as $n \to \infty$.} \label{conv-of-lamda}
\end{equation}
To prove \eqref{derivative-k2}, it now suffices to show that 
\begin{equation}\label{derivative-k2-sufficient}
\lambda_{k,0}(\Omega)= \lambda^{\star}_k \qquad \text{for every $k \in \N$.}
\end{equation}
It follows from \eqref{conv-of-phi} that
\begin{equation}
  \label{eq:orthonormal-limit-system}
\|\phi_{k,0}\|_{L^2(\Omega)} = 1 \quad \text{and} 
\quad \langle\phi_{k,0},\phi_{\ell,0}\rangle_{L^2(\Omega)}= 0\qquad \text{for $k, \ell \in \N$, $\ell \not = k$.}
\end{equation}
Moreover, for $w\in C^2_c(\Omega)$ and $n \in \N$ we have
\begin{equation}\label{test-phi-k-s}
 \cE_{s_n}(\phi_{k,s_n},w) = \lambda_{k,s_n}(\Omega)\langle\phi_{k,s_n},w\rangle_{L^2(\Omega)}
\end{equation} 
and therefore, by \cite[Theorem 1.1 (i)]{RefT1},
\begin{align*}
&\lim_{n\rightarrow \infty}\frac{\lambda_{k,s_n}(\Omega)-1}{s_n}\langle \phi_{k,s_n},w\rangle_{L^2(\Omega)}= \lim_{n\rightarrow \infty}\frac{1}{s_n}\Big( \cE_s(\phi_{k,s_n},w)-\langle\phi_{k,s_n},w\rangle_{L^2(\Omega)}\Big) \\
&=\lim_{n\rightarrow\infty}\Big\langle\phi_{k,s_n},\frac{(-\Delta)^{s_n}w-w}{s_n}\Big\rangle_{L^2(\Omega)} =\langle \phi_{k,0},\loglap w\rangle_{L^2(\Omega)} = \cE_{L}(\phi_{k,0},w).
\end{align*}
Since moreover $\langle\phi_{k,s_n},w\rangle_{L^2(\Omega)}\to \langle\phi_{k,0},w\rangle_{L^2(\Omega)}$  for $n\to \infty$, it follows from \eqref{conv-of-lamda} that 
\begin{equation}\label{phi-s0}
\cE_{L}(\phi_{k,0},w) = \lambda^{\star}_k \langle \phi_{k,0}, w \rangle_{L^2(\Omega)} ~~~~\text{ for all $w\in C^2_c(\Omega)$.}
\end{equation}
Thus $\phi_{k,0}$ is a Dirichlet eigenfunction of the logarithmic Laplacian $L_{0}$ corresponding to $\lambda^{\star}_k$.

Next, for fixed $k \in \N$, we consider $E_{k,0}:=\text{span}\{\phi_{1,0},\phi_{2,0},\cdots,\phi_{k,0}\}$, which is a $k$-dimensional subspace of $\cH^0_0(\Omega)$  by \eqref{eq:orthonormal-limit-system}. Since 
\[
\lambda^{\star}_1\le \lambda^{\star}_2\leq \ldots \leq \lambda^{\star}_k
\]
as a consequence of \eqref{conv-of-lamda} and since $\lambda_{i,s_n}\leq \lambda_{j,s_n}$ for $1\leq i\leq j\leq k$, $n\in\N$, we have the following estimate for every $w =\sum\limits_{i=1}^k\alpha_{i}\phi_{i,0} \in E_{k,0}$ with $\alpha_{1},\cdots,\alpha_{k}\in \R$:
\begin{align}
  \cE_0(w,w) &=  \sum_{i,j=1}^k\alpha_{i}\alpha_{j}\cE_0(\phi_{i,0},\phi_{j,0})=
               \sum_{i,j=1}^k\alpha_{i}\alpha_{j}\lambda_{i}^* \langle \phi_{i,0},\phi_{j,0} \rangle_{L^2(\Omega)}\\
& = \sum_{i=1}^k \alpha_{i}^2 \lambda_{i}^* \|\phi_{i,0}\|_{L^2(\Omega)}^2 \le \lambda_{k}^* \sum_{i=1}^k \alpha_{i}^2 = \lambda_{k}^* \|w\|_{L^2(\Omega)}^2.   
\label{sum}
\end{align}
The characterization in \eqref{char-eigen} now yields that 
\[
\lambda_{k,0}(\Omega) \le \max_{\substack{w\in E_{k,0} \\ \|w\|_{L^2(\Omega)}=1}}\cE_0(w,w) \le \lambda_{k}^{\star}.
\]
Since also $\lambda_{k}^{\star} \le \lambda_{k,0}(\Omega)$ by \eqref{conv-of-lamda}, \eqref{derivative-k2-sufficient} follows. We thus conclude that \eqref{derivative-k2} holds. Moreover, the second statement of the theorem also follows a posteriori from the equality $\lambda_{k}^{\star} = \lambda_{k,0}(\Omega)$, since we have already seen that $\phi_{k,s_n} \to \phi_{k,0}$ in $L^2(\Omega)$, where $\phi_{k,0}$ is a Dirichlet eigenfunction of the logarithmic Laplacian $L_{0}$ corresponding to the eigenvalue $\lambda^{\star}_k$. The proof is thus finished.
\end{proof}
 
 \section{Uniform \texorpdfstring{$L^\infty$}{L-infty}-bounds on eigenfunctions}\label{section-regularity-of-LD}
Through the remainder of this paper, we fix $k\in \N$, and we consider, for $s\in(0,\frac{1}{4}]$, eigenfunctions $\phi_s:=\phi_{k,s}$ of $(-\Delta)^s$ in $\Omega$ corresponding to $\lambda_{s}:=\lambda_{k,s}$. Furthermore, we assume that $\phi_s$ is $L^2$-normalized, that is $\|\phi_s\|_{L^2(\Omega)}=1$ for all $s\in(0,\frac{1}{4}]$. The main result of this section is the following.

\begin{thm}
  \label{uniform-l-infty-section}
There exists a constant $C=C(N,\Omega,k)$ with the property that $\|\phi_s\|_{L^\infty(\Omega)}\le C$ for all $s \in (0,\frac{1}{4}]$.  
\end{thm}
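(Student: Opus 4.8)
The plan is to run a De Giorgi-type truncation argument, but organized via the "$\delta$-decomposition'' of the fractional Laplacian announced in the introduction, so that the quadratic-form estimates of the scale-invariant operator associated to the kernel $|x-y|^{-N}1_{|x-y|<\delta}$ can be used directly. First I would fix $s\in(0,\tfrac14]$ and, for a level $\ell>0$, test the eigenvalue equation $\cE_s(\phi_s,v)=\lambda_s\int_\Omega \phi_s v$ with the truncation $v=(\phi_s-\ell)^+$. Using the elementary pointwise inequality $\bigl(a-b\bigr)\bigl((a-\ell)^+-(b-\ell)^+\bigr)\ge \bigl((a-\ell)^+-(b-\ell)^+\bigr)^2$, one obtains
\begin{equation*}
\frac{C_{N,s}}{2}\iint_{\R^N\times\R^N}\frac{\bigl((\phi_s-\ell)^+(x)-(\phi_s-\ell)^+(y)\bigr)^2}{|x-y|^{N+2s}}\,dx\,dy\le \lambda_s\int_\Omega \phi_s\,(\phi_s-\ell)^+\,dx.
\end{equation*}
Next I would restrict the kernel to $|x-y|<\delta$ for a small fixed $\delta=\delta(N)$: since the discarded part of the left-hand side is nonnegative and $|x-y|^{-N-2s}\ge \delta^{-2s}|x-y|^{-N}$ on $|x-y|<\delta$, this bounds $\|(\phi_s-\ell)^+\|^2$ in the norm of (a scaled copy of) $\cH^0_0$ on the truncated-kernel form, with a constant comparable to $\frac{C_{N,s}}{s}\cdot s\,\delta^{-2s}\sim C_N$ by Lemma~\ref{C-N-s-asymptotics} — crucially, uniform in $s$. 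On the right-hand side, $\lambda_s\le 1+sC\le 2$ by Lemma~\ref{upper-est-lambda_k}, and $\int_\Omega \phi_s(\phi_s-\ell)^+\le \int_{A_\ell}\bigl((\phi_s-\ell)^+ + \ell\bigr)(\phi_s-\ell)^+$ where $A_\ell=\{\phi_s>\ell\}$.

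The heart of the matter is then a Sobolev/Nash-type inequality for the truncated-kernel form that is \emph{uniform in $\delta$} (equivalently in $s$): one has an embedding $\cH^{0,\delta}_0(\Omega)\hookrightarrow L^{2^*}(\Omega)$ or, more robustly, a logarithmic Sobolev / $L^2$-to-$L^{2+\eps}$ estimate, with constants depending only on $N$, $|\Omega|$ and $\delta$ but bounded as $\delta\to 0^+$; such estimates for weakly singular kernels are exactly what \cite{RefM1,RefJ1} supply, and this is the step I expect to be the main obstacle — one must check that the relevant embedding constant does not blow up as the kernel localizes. Granting this, I would feed the resulting estimate
\begin{equation*}
\|(\phi_s-\ell)^+\|_{L^{2+\eps}(\Omega)}^2\le C\Bigl(\ell\,|A_\ell|^{1-\frac{1}{2+\eps}}\|(\phi_s-\ell)^+\|_{L^{2+\eps}}+\|(\phi_s-\ell)^+\|_{L^2}^2\Bigr)
\end{equation*}
into the standard Stampacchia iteration lemma: defining $a(\ell)=\int_{A_\ell}(\phi_s-\ell)^+$ or the measures $|A_\ell|$, one derives a nonlinear recursion $a(h)\le \frac{C\,a(\ell)^{1+\mu}}{(h-\ell)^{\gamma}}$ for $h>\ell$ with $\mu>0$, which forces $a(\ell_\infty)=0$ for some finite level $\ell_\infty\le C_0\bigl(N,\Omega\bigr)\,\|\phi_s\|_{L^2(\Omega)}=C_0(N,\Omega)$. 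The same argument applied to $-\phi_s$ bounds the negative part, giving $\|\phi_s\|_{L^\infty(\Omega)}\le C(N,\Omega,k)$; the $k$-dependence enters only through the bound $\lambda_s\le 1+sC(N,\Omega,k)$.

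An alternative, and perhaps cleaner, route I would keep in reserve avoids iteration entirely: by the uniform bound on $\|\phi_s\|_{\cH^0_0(\Omega)}$ from Lemma~\ref{prop-b} together with the uniform comparison $\cE_s(u,u)\ge c_N\|u\|_{\cH^{0,\delta}_0}^2$ just described, the eigenfunctions lie in a fixed bounded subset of $\cH^0_0(\Omega)$; one may then invoke an $L^\infty$-bound for solutions of $L^s u=f u$ with $f\in L^\infty$ that is already known for fixed operators, provided its constant is controlled uniformly via the $\delta$-decomposition. Either way, the nonstandard point to be careful about — and the only genuinely new ingredient — is the $\delta$-uniformity of the functional-analytic constants; everything else is a routine adaptation of De Giorgi--Nash--Moser to the nonlocal setting.
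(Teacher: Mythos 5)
There is a genuine gap, and it sits exactly where you placed your confidence rather than where you placed your doubt. After restricting the kernel to $|x-y|<\delta$, the coercive left-hand side of your Caccioppoli inequality carries the prefactor $\tfrac{C_{N,s}}{2}\delta^{-2s}$, and since $C_{N,s}\sim s\,C_N$ as $s\to 0^+$ (Lemma~\ref{C-N-s-asymptotics}) this is of order $s$, not of order $1$: your claim that the constant is ``$\frac{C_{N,s}}{s}\cdot s\,\delta^{-2s}\sim C_N$, crucially uniform in $s$'' is an arithmetic slip, as that product equals $C_{N,s}\delta^{-2s}\to 0$. Meanwhile you keep the full right-hand side $\lambda_s\int_\Omega\phi_s(\phi_s-\ell)^+dx$, which is of order $1$. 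Dividing through, every constant entering your Sobolev embedding and your Stampacchia recursion blows up like $1/s$, so the terminal level $\ell_\infty$ is not uniform in $s$ and the argument fails precisely in the regime $s\to 0^+$ that the theorem is about.

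The missing idea is a cancellation. The $\delta$-decomposition is not used merely to localize the kernel on the left; one writes the cross term as $\cE_s^\delta(\phi_s,w_c)=\cE_s(\phi_s,w_c)-\kappa_{\delta,s}\langle\phi_s,w_c\rangle_{L^2}+\langle k_{\delta,s}*\phi_s,w_c\rangle_{L^2}$, where $\kappa_{\delta,s}=\tfrac{C_{N,s}\omega_{N-1}}{2s}\delta^{-2s}=1+O(s)$ is the total mass of the far-field part of the kernel. The eigenvalue equation then produces the coefficient $\lambda_s-\kappa_{\delta,s}=\bigl(\lambda_{k,L}-\rho_N+2\ln\delta\bigr)s+o(s)$, which for $\delta$ small enough is $\le -s$; since $\phi_s w_c\ge c\,w_c\ge 0$, this term is nonpositive and absorbs the level $c$. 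The only remaining positive contribution is $\langle k_{\delta,s}*\phi_s,w_c\rangle\le\|k_{\delta,s}\|_{L^2}\int_\Omega w_c\,dx$ with $\|k_{\delta,s}\|_{L^2}=O(s)$, so both sides are now of order $s$, and for $c>\sup_s s^{-1}\|k_{\delta,s}\|_{L^2}$ one gets $\cE_s^\delta(w_c,w_c)\le 0$ outright, whence $w_c\equiv 0$ by the Poincar\'e inequality of \cite[Lemma 2.7]{RefM1}. In particular, no Sobolev or Nash embedding uniform in $s$ and no De Giorgi iteration are needed: the step you flagged as the ``main obstacle'' does not occur in a correct argument, while the step you treated as routine bookkeeping --- matching the orders in $s$ of the two sides via the near-cancellation of $\lambda_s$ against $\kappa_{\delta,s}$ --- is the actual content of the proof. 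Your fallback route via Lemma~\ref{prop-b} plus ``a known $L^\infty$ bound whose constant is controlled uniformly'' begs the same question.
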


To prove this result, we use a new approach based on a so-called $\delta$-decomposition of nonlocal quadratic forms.\\

 For $\delta>0$ and $u,v \in H^s(\R^N)$, we can write
 \begin{align*}
   \cE_s(u,v)&= \cE_s^\delta(u,v) + \frac{C_{N,s}}{2} \iint_{|x-y|>\delta} \frac{(u(x)-u(y))(v(x)-v(y))}{|x-y|^{N+2s}}\,dxdy\Bigr)\\
   &= \cE_s^\delta(u,v)+   
\kappa_{\delta,s}  \langle u,v \rangle_{L^2(\R^N)} -  \langle k_{\delta,s} * u, v \rangle_{L^2(\R^N)}   
 \end{align*}
 with the $\delta$-dependent quadratic form  
 $$
(u,v) \mapsto \cE_s^\delta(u,v)= \frac{C_{N,s}}{2}\iint_{|x-y|<\delta} \frac{(u(x)-u(y))(v(x)-v(y))}{|x-y|^{N+2s}}\,dxdy,
  $$
  the function $k_{\delta,s} = C_{N,s}1_{\text{\tiny $\R^N \setminus B_{\delta}(0)$}}|\cdot |^{-N-2s}\in L^1(\R^N)$ and the constant
  $$
  \kappa_{\delta,s} = \frac{C_{N,s}\omega_{N-1}\delta^{-2s}}{2s}.
  $$
  In particular, this decomposition is valid if $\Omega \subset \R^N$ is a bounded Lipschitz domain and $u,v \in \cH^{s}_{0}(\Omega)$.

\begin{proof}[Proof of Theorem~\ref{uniform-l-infty-section}]
  Let $\delta\in(0,1)$, $c>0$, and consider the function $w_c=(\phi_s-c)^+: \Omega \to \R$ for $s \in (0,1)$. Then $w_c\in \cH^0_0(\Omega)$ by \cite[Lemma 3.2]{RefJ2}. Moreover, for $x,y \in \R^N$ we have
  \begin{align*}
    &(\phi_s(x)-\phi_s(y))(w_c(x)-w_c(y)) = ([\phi_s(x)-c]-[\phi_s(y)-c])(w_c(x)-w_c(y))\\
&=[\phi_s(x)-c]w_c(x)+[\phi_s(y)-c]w_c(y)-[\phi_s(x)-c]w_c(y) -w_c(x)[\phi_s(y)-c]\\
    &=w_c^2(x)+w_c^2(y)-2 w_c(x)w_c(y)+ [\phi_s(x)-c]^- w_c(y)+ w_c(x)[\phi_s(y)-c]^-\\
    &\ge w_c^2(x)+w_c^2(y)-2 w_c(x)w_c(y) = (w_c(x)-w_c(y))^2,
  \end{align*}
which implies that   
\begin{align}
  &\cE_{s}^\delta(w_c,w_c)= \frac{C_{N,s}}{2}\iint_{|x-y|<\delta}\frac{(w_c(x)-w_c(y))^2}{|x-y|^{N+2s}}\,dxdy   \label{result2:eq1}\\
  &\le   \frac{C_{N,s}}{2}\iint_{|x-y|<\delta} \frac{(\phi_s(x)-\phi_s(y))(w_c(x)-w_c(y))}{|x-y|^{N+2s}}\,dxdy\nonumber\\
  & = \cE_{s}^\delta(\phi_s,w_c) = \cE_s(\phi_s,w_c)-\kappa_{\delta,s} \langle \phi_s,w_c \rangle_{L^2(\Omega)} + \langle k_{\delta,s} * \phi_s\,,w_c \rangle_{L^2(\Omega)} \nonumber\\
&= \bigl(\lambda_s-\kappa_{\delta,s}\bigr) \langle \phi_s,w_c \rangle_{L^2(\Omega)} + \langle k_{\delta,s} * \phi_s\,,w_c \rangle_{L^2(\Omega)}= g_\delta(s)\langle \phi_s,w_c \rangle_{L^2(\Omega)} + \langle k_{\delta,s} * \phi_s\,,w_c \rangle_{L^2(\Omega)}\nonumber
\end{align}
with the function
\begin{equation}
  \label{eq:def-g-delta}
g_\delta:(0,1)\to \R, \qquad g_\delta(s)= \lambda_s -\kappa_{\delta,s}= \lambda_s- \frac{C_{N,s}\omega_{N-1}\delta^{-2s}}{2s}.
\end{equation}
Since $\lambda_s = 1 + \lambda_L s + o(s)$ by Theorem \ref{lambda-limit-lower-bound}, where $\lambda_L=\lambda_{k,0}$ denotes the $k$-the eigenvalue of the logarithmic Laplacian, and 
$$
\frac{C_{N,s}\omega_{N-1}\delta^{-2s}}{2s} = 1 + \bigl(\rho_N + 2 \ln \delta\bigr)s +o(s)\qquad \text{as $s \to 0^+$}
$$
by Lemma~\ref{C-N-s-asymptotics}, we have
$$
g_\delta(s) = \bigl(\lambda_L - \rho_N + 2 \ln \delta\bigr)s+o(s)\qquad \text{as $s \to 0^+$.}
$$
Here the remainder term $o(s)$ depends on $\delta>0$. Nevertheless, we may first fix $\delta \in (0,1)$ sufficiently small such that $\lambda_L - \rho_N + 2 \ln \delta
<-1$, and then we may fix $s_0 \in (0,\frac{1}{4}]$ with the property that 
\begin{equation}
\label{bdd-above-gs}
g_\delta(s)\leq -s \le 0 \quad \text{ for all $s\in (0,s_0]$.}
\end{equation}
Since also $\phi_s(x)w_c(x) \ge c w_c(x) \ge 0$ for $x \in \Omega$, $s \in (0,s_0]$, we deduce from \eqref{result2:eq1} that
\begin{equation}
  \label{result2:eq1-1}
  \cE_{s}^\delta(w_c,w_c)  \le  \int_{\Omega} [k_{\delta,s} * \phi_s - s c] w_c \,dx \le \bigl(\|k_{\delta,s} * \phi_s\|_{L^{\infty}(\Omega)}- s c \bigr) \int_{\Omega} w_c \,dx.
\end{equation}
Here we note that, by H{\"o}lder's (or Young's) inequality, 
$$
\|k_{\delta,s} * \phi_s\|_{L^{\infty}(\Omega)} \leq \|k_{\delta,s}\|_{L^2(\R^N)} \|\phi_s\|_{L^2(\Omega)}= \|k_{\delta,s}\|_{L^2(\R^N)}
$$
with
$$
\|k_{\delta,s}\|_{L^2(\R^N)} = C_{N,s}\Bigg(\ \int_{\R^N\setminus B_{\delta}} |y|^{-2N-4s}\ dy\Bigg)^{1/2}=\frac{C_{N,s}\omega_{N-1}^{\frac{1}{2}}\delta^{-\frac{N}{2}-2s}}{\sqrt{N+4s}}.
$$
Since
$$
\tilde d:= \sup_{s \in (0,s_0]} \frac{\|k_{\delta,s}\|_{L^2(\R^N)}}{s} = \sup_{s \in (0,s_0]}\frac{C_{N,s}\omega_{N-1}^{\frac{1}{2}}\delta^{-\frac{N}{2}-2s}}{s \sqrt{N+4s}}
< \infty,
$$
we deduce from \eqref{result2:eq1-1} that for $c> \tilde d$ and $s \in (0,s_0]$ we have 
\begin{equation*}
0 \le  \cE_{s}^\delta(w_c,w_c)  \le  s \bigl(\tilde d - c\bigr) \int_{\Omega}  w_c \,dx \le 0
\end{equation*}
and therefore $\cE_{s}^\delta(w_c,w_c)=0$. Consequently, $w_c=0$ in $\Omega$ for $s \in (0,s_0]$ by the Poincar\'e type inequality given in \cite[Lemma 2.7]{RefM1} . But then $\phi_s(x)\leq c$ a.e. in $\Omega$, and therefore
$$
\sup_{s \in (0,s_0]}\|\phi_s^+\|_{L^\infty(\Omega)} \le c.
$$
Repeating the above argument for $-\phi_s$ in place of $\phi_s$, we also find that
$\sup \limits_{s \in (0,s_0]}\|\phi_s^-\|_{L^\infty(\Omega)} \le c$ and therefore
\begin{equation}
  \label{eq:uniform-bound-s-0}
\sup \limits_{s \in (0,s_0]}\|\phi_s\|_{L^\infty(\Omega)} \le c.
\end{equation}
It remains to prove that 
\begin{equation}
  \label{eq:uniform-bound-1-third}
\sup \limits_{s \in [s_0,\frac{1}{4}]}\|\phi_s\|_{L^\infty(\Omega)} <\infty.
\end{equation}
To see this, we argue as above, but with different values of $\delta \in (0,1)$ and $c>0$. For this we first note that, by \eqref{eq:def-g-delta}, we may choose $\delta \in (0,1)$ sufficiently small so that \eqref{bdd-above-gs} holds for $s \in [s_0,\frac{1}{4}]$. With this new value of $\delta$ and $\tilde d$ redefined as
$$
\tilde d:= \sup_{s \in [s_0,\frac{1}{4}]} \frac{\|k_{\delta,s}\|_{L^2(\R^N)}}{s} = \sup_{s \in [s_0,\frac{1}{4}]}\frac{C_{N,s}\omega_{N-1}^{\frac{1}{2}}\delta^{-\frac{N}{2}-2s}}{s \sqrt{N+4s}}
< \infty,
$$
we may now fix $c> \tilde d$ and complete the argument as above to see that
also 
$$
\sup \limits_{s \in [s_0,\frac{1}{4}]}\|\phi_s\|_{L^\infty(\Omega)} \le c.
$$
Hence \eqref{eq:uniform-bound-1-third} holds. The proof is now finished by combining \eqref{eq:uniform-bound-s-0} and \eqref{eq:uniform-bound-1-third}. 
\end{proof}

\section{Local equicontinuity}\label{section-Equic}


This section is devoted to prove local equicontinuity of the set $\{\phi_s\::\: s \in (0,\frac{1}{4}]\}$ in $\Omega$.
The first step of the proof consists in deriving $s$-dependent H{\"o}lder estimates for the functions {\em with uniform (i.e., $s$-independent) constants as $s \to 0^+$.} 
As a preliminary tool, we need to consider the {\em Riesz kernel}
\begin{equation}
  \label{eq:def-Riesz-kernel}
F_s:\R^N\setminus \{0\}\to [0,\infty), \qquad 
	F_s(z)= \kappa_{N,s}|z|^{2s-N} \quad \text{with}\quad \kappa_{N,s}=\frac{s\Gamma(\frac{N}{2}-s)}{4^s\pi^{N/2}\Gamma(1+s)}.
\end{equation}

In particular, we need the following Lemma.
\begin{lemma}\label{interior-basis}
	Let $s\in(0,\frac{1}{4}]$, $r \in (0,1)$ and $f\in L^\infty(\overline{B_r})$.
	Moreover, let
        \begin{equation}
          \label{eq:def-u-f}
        u_f: \R^N \to \R, \qquad u_f(x):=\int_{B_r} F_s(x-y)f(y)\ dy.
        \end{equation}
        Then $u_f\in C^{s}(\R^N)\cap L^\infty(\R^N)$, and there is a constant $C=C(N)>0$ such that
        \begin{equation}
          \label{eq:interior-basis-l-infty-est}
	|u_f(x)-u_f(y)| \leq C r^{s}\|f\|_{L^\infty(B_r)}|x-y|^{s} \quad\text{ for all $x,y\in \R^N$.}
        \end{equation}
        If, moreover, $f\in C^{\alpha}(\overline{B_r})$ for some $\alpha\in(0,1-s)$,
        then we also have
         \begin{equation}
          \label{eq:interior-basis-s+alpha-est}
	|u_f(x)-u_f(y)| \leq C r^{s-\alpha} \|f\|_{C^{\alpha}(\overline{B_r})}|x-y|^{s+\alpha} \quad\text{ for all $x,y\in \R^N$}
        \end{equation}
        after making $C=C(N)$ larger if necessary.
      \end{lemma}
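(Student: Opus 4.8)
The plan is to use that $F_s$ is, up to the factor $\kappa_{N,s}$, the Riesz kernel, so that $u_f=\kappa_{N,s}\bigl(|\cdot|^{2s-N}*(f1_{B_r})\bigr)$, and then to run the classical near/far splitting of this potential while keeping every occurrence of $s$ explicit. The one fact that makes all constants independent of $s$ is that $s\mapsto\kappa_{N,s}/s=\Gamma(\tfrac N2-s)/\bigl(4^s\pi^{N/2}\Gamma(1+s)\bigr)$ extends continuously to $[0,\tfrac14]$ with value $C_N$ at $s=0$, so $M_N:=\sup_{s\in(0,1/4]}\kappa_{N,s}/s<\infty$; every factor $\tfrac1{2s}$ (or $\tfrac1{2s+\alpha}$) produced by $\int_{B_\sigma}|w|^{2s-N}\,dw=\omega_{N-1}\tfrac{\sigma^{2s}}{2s}$ is then absorbed by the $\kappa_{N,s}$ in front. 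A first application, using $r<1$, gives $\|u_f\|_{L^\infty(\R^N)}\le C(N)\|f\|_{L^\infty(B_r)}$ (the worst case $x\in B_r$ yields $|u_f(x)|\le\kappa_{N,s}\|f\|_\infty\int_{B_{2r}(x)}|x-z|^{2s-N}\,dz=\tfrac{\kappa_{N,s}}{2s}\omega_{N-1}\|f\|_\infty(2r)^{2s}$); continuity of $u_f$ then follows a posteriori from the Hölder bounds below, so that $u_f\in C^s(\R^N)\cap L^\infty(\R^N)$.

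For \eqref{eq:interior-basis-l-infty-est} fix $x\ne y$ and put $\rho=|x-y|$. If $\rho\ge r$, the $L^\infty$-bound suffices: $|u_f(x)-u_f(y)|\le2\|u_f\|_\infty\le C(N)\|f\|_\infty r^{2s}\le C(N)\|f\|_\infty r^s\rho^s$, using $r\le\rho$. If $\rho<r$, split the integral over $B_r$ into the singular part $D:=\bigl(B_{2\rho}(x)\cup B_{2\rho}(y)\bigr)\cap B_r$ and the remainder $B_r\setminus D$. On $D$, which is contained in $B_{3\rho}(x)\cap B_{3\rho}(y)$, estimating the two kernels separately gives
\[
\kappa_{N,s}\int_D\bigl(|x-z|^{2s-N}+|y-z|^{2s-N}\bigr)|f(z)|\,dz\le\tfrac{\kappa_{N,s}}{s}\,\omega_{N-1}(3\rho)^{2s}\|f\|_\infty\le C(N)\|f\|_\infty\rho^{2s}.
\]
On $B_r\setminus D$ we have $|x-z|,|y-z|\ge2\rho$, hence $|\xi-z|\ge\tfrac12|x-z|$ for all $\xi$ on $[x,y]$; the mean value theorem with $\bigl|\nabla|w|^{2s-N}\bigr|=(N-2s)|w|^{2s-N-1}$ and $\int_{|x-z|\ge2\rho}|x-z|^{2s-N-1}\,dz=\omega_{N-1}\tfrac{(2\rho)^{2s-1}}{1-2s}$ (finite as $2s-2<-1$ for $s\le\tfrac14$) give a contribution $\le\kappa_{N,s}C(N)\tfrac{N-2s}{1-2s}\|f\|_\infty\rho^{2s}\le C(N)\|f\|_\infty\rho^{2s}$, since $\tfrac{N-2s}{1-2s}\le2N$ on $(0,\tfrac14]$. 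Adding the two parts and using $\rho^{2s}=\rho^s\rho^s\le r^s\rho^s$ (as $\rho<r<1$) proves \eqref{eq:interior-basis-l-infty-est}.

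For \eqref{eq:interior-basis-s+alpha-est} I would repeat this with the same case distinction. The case $\rho\ge r$ again follows from the $L^\infty$-bound, now via $r^{2s}=r^{s-\alpha}r^{s+\alpha}\le r^{s-\alpha}\rho^{s+\alpha}$. For $\rho<r$, only the bound on $D$ has to be improved from $\rho^{2s}$ to $\rho^{2s+\alpha}$: writing $f(z)=f(x)+(f(z)-f(x))$ against the $x$-kernel and $f(z)=f(y)+(f(z)-f(y))$ against the $y$-kernel, the increments are controlled by $\|f\|_{C^\alpha(\overline{B_r})}|z-x|^\alpha$, resp.\ $\|f\|_{C^\alpha(\overline{B_r})}|z-y|^\alpha$, with $|z-x|,|z-y|<3\rho$ on $D$, which supplies the missing factor $\rho^\alpha$ and a bound $C(N)\|f\|_{C^\alpha(\overline{B_r})}\rho^{2s+\alpha}$; the leftover constant-coefficient term $f(x)\int_D|x-z|^{2s-N}\,dz-f(y)\int_D|y-z|^{2s-N}\,dz$ is treated by the reflection $z\mapsto x+y-z$, which fixes $D$ and exchanges the two kernels whenever $B_{2\rho}(x)\cup B_{2\rho}(y)\subset B_r$, so that in that case the two integrals coincide and only $|f(x)-f(y)|\le\|f\|_{C^\alpha(\overline{B_r})}\rho^\alpha$ times a bounded quantity remains. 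Combining these and using $\rho^{2s+\alpha}\le r^{s-\alpha}\rho^{s+\alpha}$ (since $\rho^s\le r^s\le r^{s-\alpha}$ as $\rho<r<1$ and $\alpha>0$) yields \eqref{eq:interior-basis-s+alpha-est} after enlarging $C(N)$.

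I expect the main obstacle to be not a single estimate but the uniformity in $s$: a routine appeal to Riesz-potential regularity returns constants of order $1/s$, which is exactly what must be avoided here, so one must carry the normalisation $\kappa_{N,s}$ through every step and use repeatedly that $\kappa_{N,s}/s$, $\tfrac{N-2s}{1-2s}$, $\tfrac1{1-2s}$ and $3^{2s+\alpha}$ all stay bounded on $(0,\tfrac14]$ — the cut-off $s\le\tfrac14$ entering precisely to make $|x-z|^{2s-N-1}$ integrable at infinity and to bound $\tfrac{N-2s}{1-2s}$. The one genuinely geometric delicacy, where the position of $x$ and $y$ relative to $\partial B_r$ matters, is the behaviour of $u_f$ near $\partial B_r$: there the integrand feels the jump of $f1_{B_r}$, the reflection argument is unavailable, and the estimate on the singular part degrades to $C(N)\|f\|_\infty\rho^{2s}$, so completing the argument in that region is the delicate point.
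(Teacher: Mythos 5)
Your treatment of the $L^\infty$-bound and of the first estimate \eqref{eq:interior-basis-l-infty-est} is correct and complete. It replaces the paper's route (scaling to $r=1$ followed by the pointwise kernel inequality $|a^{2s-N}-b^{2s-N}|\le |a-b|^s(a^{s-N}+b^{s-N})$ and the bathtub bound) by the classical near/far splitting with the mean value theorem on the far field; this is a legitimate alternative and in fact yields the slightly stronger conclusion $|u_f(x)-u_f(y)|\le C\|f\|_{L^\infty}|x-y|^{2s}$. Your bookkeeping for $s$-uniformity (boundedness of $\kappa_{N,s}/s$, of $\tfrac{N-2s}{1-2s}$, etc.\ on $(0,\tfrac14]$) is exactly the right point and is handled correctly.

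The argument for \eqref{eq:interior-basis-s+alpha-est} has two genuine gaps. First, it is not true that ``only the bound on $D$ has to be improved'': your far-field contribution is still only $C\|f\|_{L^\infty}\rho^{2s}$, and $\rho^{2s}\le r^{s-\alpha}\rho^{s+\alpha}$ is equivalent to $\rho^{s-\alpha}\le r^{s-\alpha}$, which fails for $\rho<r$ whenever $\alpha>s$ --- precisely the range $\alpha=2s$ needed in the bootstrap of Corollary~\ref{interior-3s}. One must subtract $f(x)$ in the far field as well, or, as the paper does, work with second differences. Second, the boundary region you flag as ``delicate'' is not merely delicate: for $\alpha>s$ the estimate \eqref{eq:interior-basis-s+alpha-est} is actually false for general $x,y\in\R^N$. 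Take $f\equiv 1$: in dimension one, $u_f(r+\eps)-u_f(r)=\tfrac{\kappa_{1,s}}{2s}\bigl((2r+\eps)^{2s}-(2r)^{2s}-\eps^{2s}\bigr)\sim -\tfrac{\kappa_{1,s}}{2s}\eps^{2s}$ with $\tfrac{\kappa_{1,s}}{2s}$ bounded away from $0$, and $\eps^{2s}$ is not $O\bigl(r^{s-\alpha}\eps^{s+\alpha}\bigr)$ as $\eps\to 0$ when $\alpha>s$. The correct statement --- and the one the paper's proof actually establishes, via the cutoff decomposition $u_f=u_1+u_2$, a $C^1$-bound for the far-field piece $u_1$ valid only on $B_{3r/4}$, and a Zygmund second-difference bound for $u_2=F_s*(\eta f)$ combined with the equivalence of H\"older and Zygmund seminorms --- is the estimate on an interior ball such as $B_{3r/4}$, which is all that Corollary~\ref{interior-3s} uses. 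Your reflection trick for the leading term on $D$ is a nice interior substitute for the cutoff, but to close the argument you must both repair the far field and restrict the conclusion to an interior ball rather than attempt to push it up to $\partial B_r$.
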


      \begin{proof}
For $x\in B_1$ we have
	\[
	u_f(rx)=\int_{B_r} F_s(rx-y)f(y)\ dy=r^{2s}\int_{B_1} F_s(x-z)f(rz)\ dz,
	\]
	so that we may assume $r=1$ in the following. Next, we recall the following standard estimate: 
        \begin{equation}
          \label{eq:bathtub-est}
        \int_{B_t}|x-z|^{\tau-N}\,dz \le \int_{B_t}|z|^{\tau-N}\,dz = \frac{\omega_{N-1}t^\tau }{\tau} \qquad \text{for every $t>0$, $\tau \in (0,N)$ and $x \in \R^N$.}
        \end{equation}
        From this we deduce that $u_f\in L^{\infty}(\R^N)$ with 
        \begin{align}
          \|u_f\|_{L^\infty(\R^N)} &\leq \|f\|_{L^\infty(B_1)} \kappa_{N,s} \sup_{x \in \R^N} \int_{B_1} |x-y|^{2s-N} \,dy \le \|f\|_{L^\infty(B_1)} \frac{\kappa_{N,s} \omega_{N-1}}{2s} \nonumber \\
          &= \frac{\Gamma(\frac{N}{2}-s)\omega_{N-1}}{2^{2s+1}\pi^{N/2}\Gamma(1+s)}\|f\|_{L^\infty(B_1)} \leq C_1 \|f\|_{L^\infty(B_1)} \label{u-f-boundedness-est}
	\end{align}         
	with a constant $C_1=C(N)$ independent of $s \in (0,\frac{1}{4}]$. Next, by e.g. \cite[Eq. (A.3)]{RefMMF}) we use
        \begin{equation}
          \label{eq:lieb-loss-est}
          |a^{2s-N}-b^{2s-N}|\leq \frac{N-2s}{N-s}|a-b|^s(a^{s-N}+b^{s-N})\le
|a-b|^s(a^{s-N}+b^{s-N})\quad \text{for $a,b>0$.}
        \end{equation}
With this estimate and \eqref{eq:bathtub-est}, we deduce that
	\begin{align*}
          &|u(x+h)-u(x)|=\Bigg|\int_{B_1} \bigl(F_s(x-z+h)-F_s(x-z)\bigr) f(z)\ dz\Bigg|\\
          &\leq |h|^{s}\|f\|_{L^\infty(B_1)} \kappa_{N,s}\int_{B_1}(|x-z-h|^{s-N}+|x-z|^{s-N})\ dz\\
&\leq   \frac{2 \omega_{N-1} \kappa_{N,s}}{s} \|f\|_{L^\infty(B_1)}|h|^{s} = \frac{2  \omega_{N-1}\Gamma(\frac{N}{2}-s)}{4^s\pi^{N/2}\Gamma(1+s)} \|f\|_{L^\infty(B_1)}|h|^{s} \qquad \text{for $x,h\in \R^N$.}
	\end{align*}
	Hence there is $C_2=C_2(N)$ independent of $s \in (0,\frac{1}{4}]$ such that 
\begin{equation}\label{what-we-get}
	|u(x+h)-u(x)| \leq C_2 \|f\|_{L^\infty(B_1)}|h|^{s}\qquad \text{for all $x,h \in \R^N$.}
      \end{equation}
      We thus deduce \eqref{eq:interior-basis-l-infty-est}.

      Next we assume that $f\in C^{\alpha}(\overline{B_1})$ for some $\alpha\in(0,1-s)$, and we establish \eqref{eq:interior-basis-s+alpha-est} in the case $r=1$.

We choose a cut-off function $\eta\in C^{\infty}_c(\R^N)$ with $0 \le \eta \le 1$, $\eta\equiv 1$ on $B_{7/8}$ and $\eta=0$ on $\R^N\setminus B_1$.
        We then define $w \in C^\alpha_c(\R^N)$ by $w(x)=\eta(x)f(x)$ for $x\in B_1$ and $w(x)=0$ for $x\in \R^N\setminus B_1$. Then $u_f(x)=u_1(x)+u_2(x)$ for $x\in B_1$ with
	$$
          u_1(x)=\int_{B_1}F_s(x-z)(1-\eta(z))f(z)\ dz = \int_{B_1\setminus B_{7/8}}F_s(x-z)(1-\eta(z))f(z)\ dz
          $$
          and
          $$
          u_2(x)=\int_{\R^N} F_s(x-z)w(z)\ dz \qquad \text{for $x \in \R^N$.}
	$$
Since $|x-z|\geq \frac{1}{8}$ for $x\in B_{3/4}$ and $z\in B_1\setminus B_{7/8}$, for all $\beta\in \N_0^d$, $|\beta|\leq 1$ we have
	\begin{align*}
          |\partial^{\beta}u_1(x)|&=\Big|\int_{B_1}\partial^{\beta}_xF_s(x-z)(1-\eta(z))f(z)\ dz\Big| \le  \|f\|_{L^\infty(B_1)} \|\partial^{\beta}F_s\|_{L^1(B_2 \setminus B_{\frac{1}{8}})}  
          \\
	&\leq \|f\|_{L^\infty(B_1)} \kappa_{N,s}\omega_{N-1}\Bigl((N-2s)\int_{1/8}^{2} t^{2s-2}\ dt+\int_{1/8}^{2} t^{2s-1}\ dt\Bigr) \\
                                  &\leq \|f\|_{L^\infty(B_1)}  \kappa_{N,s} \omega_{N-1} (N+2)\int_{1/8}^{2} t^{2s-2}\ dt \leq C_3 \|f\|_{L^\infty(B_1)}
	\end{align*}
for $x \in B_1$, $s \in (0,1)$ with a constant $C_3=C_3(N)>0$. Hence $u_1\in C^1(\overline{B_{3/4}})$, and 
\begin{equation}
  \label{eq:u-1-estimate}
	|u_1(x)-u_1(y)|\leq C_3 \|f\|_{L^\infty(B_1)} |x-y| \quad\text{ for all $x,y\in B_{3/4}$.}
\end{equation}
To estimate $u_2$, we first note that, by the same estimate as in \eqref{u-f-boundedness-est}, we find that
\begin{equation}
  \label{eq:u_2-boundedness-est}
          \|u_2\|_{L^\infty(B_1)} \le C \|w\|_{L^\infty(B_1)} \le C \|f\|_{L^\infty(B_1)}.  
\end{equation}
Moreover, we write $\delta_hw(x)=w(x+h)-w(x)$ for $x,h\in \R^N$. Since $w$ has a compact support contained in $B_1$ and $\eta$ is smooth, there is $C_4=C_4(N)$ such that
	$$
	|\delta_hw(x)|\leq C_4 \|f\|_{C^{\alpha}(\overline{B_1})} |h|^{\alpha}\quad\text{for all $x,h\in \R^N$.}
	$$
For $x,h\in \R^N$, $|h| \le 1$ we now have, by \eqref{eq:lieb-loss-est} and since $\delta_h w$ is supported in $B_2$, 
	\begin{align*}
          &|u_2(x+2h)-2u_2(x+h)+u_2(x)|\\
          &= \big|\delta^2_hu_2(x)\big|=\Bigg|\int_{\R^N}\delta_hF_s(x-z)\delta_hw(z)\ dz\Bigg|= \Bigg|\int_{B_2}\delta_hF_s(x-z)\delta_hw(z)\ dz\Bigg|\\
          &\leq |h|^{\alpha+s}C_4 \|f\|_{C^{\alpha}(\overline{B_1})} \kappa_{N,s}\int_{B_2}(|x-z-h|^{s-N}+|x-z|^{s-N})\ dz.
        \end{align*}
        Using now \eqref{eq:bathtub-est} again, we deduce that 
	\begin{align*}
          &|u_2(x+2h)-2u_2(x+h)+u_2(x)|\\	&\leq   \frac{\kappa_{N,s}}{s} C_4  \omega_{N-1}2^{s+1}  \|f\|_{C^{\alpha}(\overline{B_1})}|h|^{\alpha+s} = \frac{C_4 \omega_{N-1}2^{s+1} \Gamma(\frac{N}{2}-s)}{4^s\pi^{N/2}\Gamma(1+s)} \|f\|_{C^{\alpha}(\overline{B_1})}|h|^{\alpha+s}.
	\end{align*}
	Hence there is $C_5=C_5(N)$ such that 
\begin{equation}\label{what-we-get2}
	|u_2(x+2h)-2u_2(x+h)+u_2(x)| \leq C_5 \|f\|_{C^{\alpha}(\overline{B_1})} |h|^{\alpha+s} \qquad \text{for all $x\in \R^N$, $|h| \le 1$.}
      \end{equation}
By \eqref{eq:u_2-boundedness-est}, we may make $C_5>0$ larger if necessary so that \eqref{what-we-get2} holds for all $x, h \in \R^N$.      
Since $\alpha+s<1$ by assumption, it now follows, by a well known argument, that
\begin{equation}\label{what-we-get-follow}
	|u_2(x+h)-u_2(x)| \leq C_6 \|f\|_{C^{\alpha}(\overline{B_1})} |h|^{\alpha+s} \qquad \text{for all $x, h \in \R^N$}
\end{equation}
with a constant $C_6=C_6(N)>0$. For the convenience of the reader, we recall this argument in the appendix.
The estimate \eqref{eq:interior-basis-s+alpha-est} now follows by combining \eqref{eq:u-1-estimate} and \eqref{what-we-get-follow}.  
\end{proof}

We need another lemma.

\begin{lemma}
  \label{lemma-jarohs-saldana-weth}
  Let $r>0$, $f \in L^\infty(B_r)$, and suppose that $u \in L^\infty(\R^N)$ is a distributional solution of the equation $(-\Delta)^s u = f$ in $B_r$ for some $s \in (0,\frac{1}{4}]$. Moreover, let $u_f: \R^N \to \R$ be defined as in \eqref{eq:def-u-f}, and let
 $u_*:= u-u_f$. 

 Then we have the estimate
 \begin{equation}
   \label{eq:jarohs-saldana-weth-est}
 |u_*(x)-u_*(y)| \le C |x-y|^{3s} \Bigl(r^{-3s} \|u\|_{L^\infty(\R^N\setminus B_r)} +r^{-s} 
\|f\|_{L^\infty(B_r)}\Bigr)\qquad \text{for $x,y \in B_{\frac{r}{2}}$}  
 \end{equation}
with a constant $C=C(N)>0.$
\end{lemma}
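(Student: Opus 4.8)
The plan is to identify $u_f$ as a Riesz potential, deduce from this that $u_* = u - u_f$ is $s$-harmonic in $B_r$, and then control its oscillation through the Poisson kernel of the ball. The constant $\kappa_{N,s}$ in \eqref{eq:def-Riesz-kernel} is normalized precisely so that $F_s$ is the Riesz kernel of order $2s$; hence $u_f = F_s * (f 1_{B_r})$ satisfies $(-\Delta)^s u_f = f 1_{B_r}$ as a distributional identity on $\R^N$. Subtracting this from the hypothesis $(-\Delta)^s u = f$ in $B_r$, I obtain that $u_*$ is bounded (being the difference of the two bounded functions $u$ and $u_f$) and $s$-harmonic in $B_r$. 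Rescaling \eqref{u-f-boundedness-est} via $u_f(rx) = r^{2s}\int_{B_1}F_s(x-z)f(rz)\,dz$ gives $\|u_f\|_{L^\infty(\R^N)} \le C(N)\,r^{2s}\|f\|_{L^\infty(B_r)}$, so that
\[
\|u_*\|_{L^\infty(\R^N\setminus B_r)} \le \|u\|_{L^\infty(\R^N\setminus B_r)} + C(N)\,r^{2s}\|f\|_{L^\infty(B_r)}.
\]

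Next I would invoke classical potential theory for $(-\Delta)^s$: a bounded $s$-harmonic function in $B_r$ is continuous (indeed smooth) in $B_r$ and is recovered from its exterior values by the Poisson formula
\[
u_*(x) = \int_{\R^N\setminus B_r} P_r(x,y)\,u_*(y)\,dy, \qquad P_r(x,y) = c_{N,s}\Bigl(\frac{r^2-|x|^2}{|y|^2-r^2}\Bigr)^s\frac{1}{|x-y|^N} \quad (x \in B_r),
\]
with $c_{N,s} = \pi^{-N/2-1}\Gamma(\tfrac N2)\sin(\pi s) \le C(N)\,s$. Since $P_r(x,y) = r^{-N}P_1(x/r,y/r)$, the substitution $y \mapsto y/r$ reduces everything to the scale-invariant estimate
\[
\int_{\R^N\setminus B_1}\bigl|P_1(x,y)-P_1(x',y)\bigr|\,dy \le C(N)\,|x-x'| \qquad \text{for } x,x' \in B_{1/2}
\]
with $C(N)$ independent of $s \in (0,\tfrac14]$: combining this with the $L^\infty$-bound above, with $|x-x'| \le r$, and with $3s \le 1$ (so that $|x-x'|/r \le (|x-x'|/r)^{3s}$) produces \eqref{eq:jarohs-saldana-weth-est} in the stated form.

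To prove the scale-invariant estimate I would differentiate the kernel. For $x \in B_{1/2}$ one has $1-|x|^2 \in [\tfrac34,1]$, and hence $|\nabla_x P_1(x,y)| \le C(N)\,c_{N,s}\,(|y|^2-1)^{-s}\bigl(|x-y|^{-N}+|x-y|^{-N-1}\bigr)$ for $|y|>1$. Splitting $\R^N\setminus B_1$ into $\{1<|y|<2\}$, where $|x-y|\ge\tfrac12$ and $(|y|^2-1)^{-s}$ is integrable since $s<1$, and $\{|y|>2\}$, where $(|y|^2-1)^{-s}\le 2^s|y|^{-2s}$ and $|x-y|\ge\tfrac12|y|$, one finds $\int_{\R^N\setminus B_1}(|y|^2-1)^{-s}|x-y|^{-N}\,dy \le C(N)/s$ and $\int_{\R^N\setminus B_1}(|y|^2-1)^{-s}|x-y|^{-N-1}\,dy \le C(N)$. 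Because $c_{N,s}\le C(N)\,s$, the prefactor absorbs the $1/s$, giving $\int_{\R^N\setminus B_1}|\nabla_x P_1(x,y)|\,dy \le C(N)$ uniformly in $s$; the mean value theorem along the segment $[x,x']\subset B_{1/2}$ then finishes it.

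The crux of the argument — and the only place where uniformity in $s$ is delicate — is exactly this cancellation: the Poisson constant $c_{N,s}$ vanishes like $s$ as $s\to 0^+$, and it compensates precisely the $1/s$ produced by integrating $(|y|^2-1)^{-s}|x-y|^{-N}$ over the unbounded region $\{|y|>1\}$ (a quantitative form of the divergence $\int_{|y|>1}|y|^{-N}\,dy = \infty$). Everything else — the distributional identity $(-\Delta)^s u_f = f 1_{B_r}$, the interior continuity of bounded $s$-harmonic functions, and the validity of the Poisson representation — is classical and can simply be quoted.
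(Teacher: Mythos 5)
Your argument is correct and follows essentially the same route as the paper: reduce to $r=1$ by scaling, observe that $u_*$ is $s$-harmonic in the ball, and control its oscillation via the Poisson representation, with the uniformity in $s$ coming from the cancellation between the Poisson constant $c_{N,s}\sim\sin(\pi s)\sim s$ (the paper's $\tau_{N,s}=\frac{2}{\Gamma(s)\Gamma(1-s)|S^{N-1}|}$ is the same constant) and the $1/s$ produced by the tail integral over $\{|y|>1\}$. The only difference is that the paper outsources the resulting Lipschitz estimate to \cite[Lemma A.1]{RefJ3}, whereas you re-derive it by an explicit gradient bound on the Poisson kernel; that computation is sound and self-contained.
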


\begin{proof}
  By scaling invariance, it suffices to consider the case $r=1$. In this case, we may follow the proof of \cite[Lemma A.1]{RefJ3}, using the
fact that $u_*$ solves the problem
        $$
        (-\Delta)^s u_* = 0 \quad \text{in $B_r$}\qquad u_* = u-u_f \quad \text{in $\R^N \setminus B_r$.} 
$$        
Using the corresponding Poisson representation of $u_*$, it was shown in \cite[Proof of Lemma A.1]{RefJ3} that 
 \begin{equation}
   \label{eq:jarohs-saldana-weth-est-proof}
   |u_*(x)-u_*(y)| \le c_1 |x-y| \Bigl(\tau_{N,s}\int_{\R^N\setminus B_1} \frac{|u(z)|}{|z|^N(|z|^2-1)^s} dz + 
\|f\|_{L^\infty(B_1)}\Bigr)\qquad \text{for $x,y \in B_{\frac{1}{2}}$}  
 \end{equation}
with a constant $c_1=c_1(N)$ and $\tau_{N,s} =\frac{2}{\Gamma(s)\Gamma(1-s)|S^{N-1}|}$, see \cite[P. 48]{RefJ3}. From this, we deduce \eqref{eq:jarohs-saldana-weth-est} in the case $r=1$ since $s \in (0,\frac{1}{4}]$.  
\end{proof}

\begin{cor}\label{interior-3s}
	Let $s\in(0,\frac{1}{4}]$. Then $\phi_s\in C^{3s}(\overline{B_{r/8}(x_0)})$ for all $x_0\in \Omega$ and $0 < r \le \min \{1,\delta_{\Omega}(x_0)\}$. Moreover, there is $C=C(N,\Omega,k)>0$ such that
	\[
	\sup_{x,y\in B_{r/8}(x_0)}\frac{|\phi_s(x)-\phi_s(y)|}{|x-y|^{3s}} \leq C r^{3s} \qquad \text{for $s \in (0,\frac{1}{4}]$.}
	\]
      \end{cor}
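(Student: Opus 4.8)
The plan is a three–step bootstrap built on the two decomposition lemmas just proved: starting from the uniform $L^\infty$–bound of Theorem~\ref{uniform-l-infty-section}, each step raises the H\"older exponent of $\phi_s$ by $s$, and the whole point is that every constant stays independent of $s\in(0,\frac14]$.

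First I would rescale to the unit ball. Fix $x_0\in\Omega$ and $0<r\le\min\{1,\delta_\Omega(x_0)\}$, so that $B_r(x_0)\subset\Omega$, and set $\psi(x):=\phi_s(x_0+rx)$. Then $\psi\in L^\infty(\R^N)$ with $\|\psi\|_{L^\infty(\R^N)}=\|\phi_s\|_{L^\infty(\R^N)}\le C(N,\Omega,k)$ by Theorem~\ref{uniform-l-infty-section}, and, since testing the eigenvalue equation against functions in $C^2_c(B_r(x_0))$ shows that $\phi_s$ solves $(-\Delta)^s\phi_s=\lambda_s\phi_s$ distributionally in $B_r(x_0)$, the scaling property of $(-\Delta)^s$ gives that $\psi$ solves $(-\Delta)^s\psi=g$ distributionally in $B_1$ with $g:=r^{2s}\lambda_s\psi$. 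Here $\|g\|_{L^\infty(B_1)}\le r^{2s}\lambda_{k,s}(\Omega)\|\psi\|_{L^\infty(\R^N)}\le C$ uniformly in $s\in(0,\frac14]$, because $r^{2s}\le1$ and $\lambda_{k,s}(\Omega)$ is bounded on $(0,\frac14]$ by Lemma~\ref{upper-est-lambda_k}.

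Now the bootstrap. In round one I apply Lemma~\ref{interior-basis} (with $r=1$) to $f=g$, decomposing $\psi=u_g+u_*$ with $u_g(x):=\int_{B_1}F_s(x-z)g(z)\,dz$: the lemma gives $u_g\in C^s(\R^N)$ with its $C^s$–seminorm controlled by $\|g\|_{L^\infty(B_1)}$, while $(-\Delta)^su_*=0$ in $B_1$, so Lemma~\ref{lemma-jarohs-saldana-weth} gives $u_*\in C^{3s}(\overline{B_{1/2}})$ with seminorm controlled by $\|\psi\|_{L^\infty(\R^N\setminus B_1)}+\|g\|_{L^\infty(B_1)}$. Hence $\psi\in C^s(\overline{B_{1/2}})$, and therefore $g=r^{2s}\lambda_s\psi\in C^s(\overline{B_{1/2}})$, with uniform bounds. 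In round two I decompose $\psi$ afresh on $B_{1/2}$, now with $f=g\in C^s(\overline{B_{1/2}})$, invoking the sharper estimate \eqref{eq:interior-basis-s+alpha-est} of Lemma~\ref{interior-basis} with $\alpha=s$ (admissible since $s<1-s$) to put the Riesz–potential part in $C^{2s}$, while Lemma~\ref{lemma-jarohs-saldana-weth} again puts the harmonic remainder in $C^{3s}(\overline{B_{1/4}})$; so $\psi\in C^{2s}(\overline{B_{1/4}})$. Round three is the same on $B_{1/4}$ with $\alpha=2s$ (admissible since $2s<1-s$, i.e.\ $s<\frac13$), producing the potential part in $C^{3s}$ and hence $\psi\in C^{3s}(\overline{B_{1/8}})$ with $C^{3s}$–seminorm bounded by a constant $C(N,\Omega,k)$, uniformly in $s$ and in $r$. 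Undoing the scaling $\psi=\phi_s(x_0+r\,\cdot)$ then gives $\phi_s\in C^{3s}(\overline{B_{r/8}(x_0)})$ together with the claimed H\"older estimate.

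The delicate point --- and the reason the argument must be organised this way --- is uniformity in $s$ as $s\to0^+$: one has to use the explicit $s$–independent constants already isolated in Lemmas~\ref{interior-basis} and \ref{lemma-jarohs-saldana-weth}, and the restriction $s\le\frac14$ is precisely what guarantees $3s<1$, so that all H\"older exponents occurring in the three rounds (namely $s$, $2s$, $3s$ and the increments $s+\alpha$) stay strictly below $1$ and each application of the two lemmas is admissible with a constant that does not degenerate as $s\to0^+$. One minor technicality: Lemma~\ref{lemma-jarohs-saldana-weth} states its bound on the open half–ball, but since the harmonic remainder is smooth in the interior of the larger ball it extends continuously, with the same bound, to the closed half–ball, which is what feeds the next round (alternatively one shrinks the radii by a fixed factor at each step).
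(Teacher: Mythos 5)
Your proposal is correct and follows essentially the same three-round bootstrap as the paper's own proof: the same splitting of $\phi_s$ into a Riesz potential of $\lambda_s\phi_s$ plus an $s$-harmonic remainder, with Lemma~\ref{interior-basis} upgrading the potential part by $s$ per round (first the $L^\infty$ estimate, then $\alpha=s$, then $\alpha=2s$) and Lemma~\ref{lemma-jarohs-saldana-weth} controlling the remainder in $C^{3s}$, on the shrinking balls $B_{r/2}\supset B_{r/4}\supset B_{r/8}$. The differences are cosmetic --- you rescale to $B_1$ first and re-decompose on the smaller ball at each round, which if anything handles more cleanly the fact that the improved H\"older regularity of the density is only known on the smaller ball --- and, exactly as in the paper's own proof, the argument actually yields the bound $Cr^{-3s}$ on the seminorm rather than the $Cr^{3s}$ printed in the statement.
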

      
\begin{proof}
By translation invariance we may assume $x_0=0\in\Omega$. Let $r \in (0, \min \{1,\delta_{\Omega}(0)\})$. We write $\phi_s=u_{s,1}+u_{s,2}$ with 
	\[
	u_{s,1}(x)=\int_{B_{r}}F_s(x-z)\lambda_s \phi_s(z)\ dz, \quad \text{for $x \in \R^N$,} \qquad u_{s,2}=\phi_s-u_{s,1},
	\]
        where $F_s$ is the Riesz kernel defined in Lemma \ref{interior-basis}. Moreover, in the following, the letter $C>0$ denotes different constants depending only on $N,\Omega$ and $k$.  
        By Lemma~\ref{uniform-l-infty-section} and Lemma~\ref{interior-basis}, we have 
$$
	|u_{s,1}(x)-u_{s,1}(y)|\leq C  r^s |x-y|^{s} \quad\text{ for all $x,y\in \R^N$.}
        $$
  Moreover, by Lemma~\ref{lemma-jarohs-saldana-weth} we have
  \begin{equation}
    \label{eq:u2-3s-est}
    |u_{s,2}(x)-u_{s,2}(y)|\leq C r^{-3s} |x-y|^{3s}
    \leq C r^{-2s} |x-y|^{2s} \le  C r^{-s}|x-y|^{s}\quad\text{for all $x,y\in B_{r/2}$.}
  \end{equation}
Hence
$$
|\phi_s(x)-\phi_s(y)| \leq C r^{-s} |x-y|^{s} \quad\text{ for all $x,y\in B_{r/2}$.}
$$
Applying now the second claim in Lemma~\ref{interior-basis} with $\alpha=s$, we deduce that
$$
	|u_{s,1}(x)-u_{s,1}(y)|\leq C  r^{-s} |x-y|^{2s} \quad\text{ for all $x,y\in B_{r/4}$.}
$$
Combining this estimate with \eqref{eq:u2-3s-est}, we deduce that
$$
|\phi_s(x)-\phi_s(y)| \leq C r^{-2s} |x-y|^{2s} \quad\text{ for all $x,y\in B_{r/4}$.}
$$
Finally, applying the second claim in Lemma~\ref{interior-basis} with $\alpha=2s$, we deduce that
$$
	|u_{s,1}(x)-u_{s,1}(y)|\leq C  r^{-2s} |x-y|^{3s} \quad\text{ for all $x,y\in B_{r/8}$.}
$$
Combining this estimate with \eqref{eq:u2-3s-est}, we deduce that
$$
|\phi_s(x)-\phi_s(y)| \leq C r^{-3s} |x-y|^{3s} \quad\text{ for all $x,y\in B_{r/8}$,}
$$
as claimed.
\end{proof}

We now state a key local bound related to an auxiliary integral operator. 

\begin{lemma}\label{interior-basis2}
  Let $t_{0},r>0$. Then there exists a constant
$C=C(N,\Omega,k,r,t_0)>0$ with the property that 
\[
\Bigl| \int_{B_{t_0}}\frac{\phi_s(x)-\phi_s(x+y)}{|y|^{N+2s}}\,dy\Bigr| \le  C \qquad\text{ for all $s \in (0,\frac{1}{4}]$ and all $x \in \Omega$ with $\delta_\Omega(x) > r$.}
\]
\end{lemma}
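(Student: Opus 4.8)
The plan is to reduce the bound to the pointwise eigenvalue equation satisfied by $\phi_s$ in the interior of $\Omega$, combined with the asymptotics of $C_{N,s}$ from Lemma~\ref{C-N-s-asymptotics}. The decisive point is that, although the (unnormalized) nonlocal operator $C_{N,s}^{-1}(-\Delta)^s$ applied to $\phi_s$ is of size $\tfrac1s$ as $s\to0^+$, so is its exterior part $\int_{\R^N\setminus B_{t_0}}$, and the two leading contributions cancel.

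First I would fix $x\in\Omega$ with $\delta_\Omega(x)>r$ and put $\rho:=\min\{1,r\}$. By Corollary~\ref{interior-3s}, applied with center $x$ and radius $\rho$, there is $C=C(N,\Omega,k,r)>0$ with $|\phi_s(x)-\phi_s(x+y)|\le C|y|^{3s}$ for $|y|\le\rho/8$ (the factor $\rho^{\pm 3s}$ furnished by the corollary being absorbed into $C$ since $s\le\tfrac14$). Because $3s>2s$, this makes $y\mapsto|y|^{-N-2s}\bigl(\phi_s(x)-\phi_s(x+y)\bigr)$ absolutely integrable near the origin, while away from the origin it is integrable since $\phi_s\in L^\infty(\R^N)$ vanishes outside the bounded set $\Omega$. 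Hence the integral in the statement, and also $\int_{\R^N\setminus B_{t_0}}(\cdots)$, converge absolutely, and by standard interior regularity $\phi_s\in C^{3s}_{loc}(\Omega)$ solves $(-\Delta)^s\phi_s=\lambda_s\phi_s$ pointwise in $\Omega$, i.e.
\[
C_{N,s}\int_{\R^N}\frac{\phi_s(x)-\phi_s(x+y)}{|y|^{N+2s}}\,dy=\lambda_s\phi_s(x).
\]
Splitting $\int_{\R^N}=\int_{B_{t_0}}+\int_{\R^N\setminus B_{t_0}}$ (each piece converging absolutely) and using $\int_{\R^N\setminus B_{t_0}}|y|^{-N-2s}\,dy=\tfrac{\omega_{N-1}t_0^{-2s}}{2s}$, I would obtain
\[
\int_{B_{t_0}}\frac{\phi_s(x)-\phi_s(x+y)}{|y|^{N+2s}}\,dy=\frac{\lambda_s\phi_s(x)}{C_{N,s}}-\phi_s(x)\,\frac{\omega_{N-1}t_0^{-2s}}{2s}+\int_{\R^N\setminus B_{t_0}}\frac{\phi_s(x+y)}{|y|^{N+2s}}\,dy.
\]

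It then remains to bound the three terms on the right uniformly in $s\in(0,\tfrac14]$ and $x$. Since $\phi_s$ is supported in $\Omega$, the last term is at most $\|\phi_s\|_{L^\infty}|\Omega|\,t_0^{-N-2s}\le C(N,\Omega,k,t_0)$ by Theorem~\ref{uniform-l-infty-section} and the boundedness of $t_0^{-N-2s}$ for $s\le\tfrac14$. For the first two terms, recalling $\omega_{N-1}=2/C_N$, I would rewrite their sum as
\[
\phi_s(x)\Bigl[\frac{\lambda_s}{C_{N,s}}-\frac{t_0^{-2s}}{sC_N}\Bigr]=\frac{\phi_s(x)}{sC_N}\Bigl[\frac{sC_N}{C_{N,s}}\,\lambda_s-t_0^{-2s}\Bigr],
\]
and then invoke the cancellation: $\bigl|\tfrac{sC_N}{C_{N,s}}-1\bigr|\le Cs$ by \eqref{eq:C-N-D-N-ineq}, $|\lambda_s-1|\le Cs$ (from \eqref{uniform-upper-bound} and the elementary lower bound $\lambda_s\ge\lambda_{1,s}\ge 1-C_N|\Omega|s$, the latter obtained by Plancherel from $\cE_s(u,u)-\|u\|_{L^2}^2\ge -C_N|\Omega|\,s\,\|u\|_{L^2}^2$), and $|t_0^{-2s}-1|\le C(t_0)s$ for $s\le\tfrac14$; therefore $\tfrac{sC_N}{C_{N,s}}\lambda_s-t_0^{-2s}=O(s)$ because the leading constants cancel, so the bracket is $O(1)$ uniformly, and after multiplication by $|\phi_s(x)|\le\|\phi_s\|_{L^\infty}\le C(N,\Omega,k)$ it stays bounded. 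Collecting the three bounds gives the claim.

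The hard part is precisely this cancellation: each of $\tfrac{\lambda_s\phi_s(x)}{C_{N,s}}$ and $\phi_s(x)\tfrac{\omega_{N-1}t_0^{-2s}}{2s}$ behaves like $\tfrac1s$ as $s\to0^+$, and only their difference is $O(1)$; everything else — the absolute convergence, the passage to the pointwise equation, and the estimate of the exterior integral — is routine once Corollary~\ref{interior-3s} and the uniform $L^\infty$-bound of Theorem~\ref{uniform-l-infty-section} are available.
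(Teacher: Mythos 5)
Your proof is correct, and it rests on exactly the same cancellation as the paper's: the integral over $B_{t_0}$ equals $\tfrac1{C_{N,s}}(-\Delta)^s\phi_s(x)=\tfrac{\lambda_s}{C_{N,s}}\phi_s(x)$ minus the tail, both of size $\tfrac1s$, and only their difference is $O(1)$ because $\tfrac{C_{N,s}}{s}\to C_N$, $\lambda_s=1+O(s)$ and $t_0^{-2s}=1+O(s)$. The organization is genuinely different, though, and cleaner. The paper does not solve for the $B_{t_0}$-integral from the pointwise equation; instead it introduces an intermediate radius $t=\min\{t_0/2,r/8\}$, forms the difference $C_N\int_{B_{t_0}}(\cdots)-\tfrac{(-\Delta)^s-1}{s}\phi_s(x)$, and bounds three error terms $I_1^s,I_2^s,I_3^s$. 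In that route Corollary~\ref{interior-3s} is used \emph{quantitatively}: the term $I_1^s=(C_N-\tfrac{C_{N,s}}{s})\int_{B_t}\tfrac{\phi_s(x)-\phi_s(x+y)}{|y|^{N+2s}}dy$ is only controlled because the prefactor is $O(s)$ while the $C^{3s}$ bound makes the integral $O(t^s/s)$. In your route the Hölder estimate enters only \emph{qualitatively}, to guarantee absolute convergence near $y=0$ and hence legitimize the splitting $\int_{\R^N}=\int_{B_{t_0}}+\int_{\R^N\setminus B_{t_0}}$ of the (otherwise principal-value) integral; the quantitative bound then comes purely from the uniform $L^\infty$-bound and the constant cancellation. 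Two cosmetic remarks: your self-contained lower bound $\lambda_{1,s}\ge 1-Cs$ is fine in structure (the constant is $c(N)|\Omega|$ rather than literally $C_N|\Omega|$), but it is unnecessary since $\sup_{s\in(0,1/4]}|\tfrac{\lambda_s-1}{s}|<\infty$ is already available from Theorem~\ref{lambda-limit-lower-bound} together with \eqref{uniform-upper-bound}; and the passage from the weak to the pointwise equation, which you invoke as standard, is used at exactly the same level of rigor in the paper's own proof, so it is not a gap relative to the paper.
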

\begin{proof}
Without loss of generality, we may assume that $r<1$. Moreover, we fix $x \in \Omega$ with $\delta_\Omega(x) < r$. In the following, we fix $t= \min \{\frac{t_0}{2},\frac{r}{8}\}<1$, and we write 
$$
\int_{B_{t_0}}\frac{\phi_s(x)-\phi_s(x+y)}{|y|^{N+2s}}\,dy = \int_{B_t}\frac{\phi_s(x)-\phi_s(y)}{|y|^{N+2s}}dy
                       - \int_{B_{t_0} \setminus B_t} \frac{\phi_s(y)}{|y|^{N+2s}}dy
                       + \omega_{N-1}\frac{t^{-2s}-{t_0}^{-2s}}{2s} \phi_s(x)
$$                       
and
$$
(-\Delta)^s \phi_s(x)= C_{N,s} \int_{B_t}\frac{\phi_s(x)-\phi_s(y)}{|y|^{N+2s}}dy - C_{N,s} \int_{\R^N \setminus B_t} \frac{\phi_s(y)}{|y|^{N+2s}}dy + \frac{\omega_{N-1} C_{N,s}}{2s}t^{-2s}\phi_s(x). 
$$
Since $C_N \omega_{N-1}=2$, we can thus write 
\begin{equation}\label{bdd-i}
C_N \int_{B_{t_0}}\frac{w(x)-w(x+y)}{|y|^{N+2s}}\,dy-\Bigl(\frac{(-\Delta)^s-1}{s}\Big)\phi_s(x)=I^s_1(x)+I^s_2(x)+I^s_3(x)
\end{equation}
with
\begin{align*}
I^s_1(x)&:= \Bigl(C_N-\frac{C_{N,s}}{s}\Bigr) \int_{{B_{t}}}\frac{\phi_s(x)-\phi_s(x+y)}{|y|^{N+2s}}\ dy\\
  I^s_2(x)&:=\Bigl(\frac{C_{N,s}}{s}-C_N\Bigr)
          \int_{B_{t_0} \setminus {B_{t}}}\frac{\phi_s(x+y)}{|y|^{N+2s}}\ dy \:+\:
          \frac{C_{N,s}}{s} \int_{\R^N  \setminus B_{t_0}}\frac{\phi_s(x+y)}{|y|^{N+2s}}\ dy
\quad \text{and}\\
  I^s_3(x)&:=\frac{\phi_s(x)}{s} \Big(C_N \omega_{N-1}\frac{t^{-2s}-{t_0}^{-2s}}{2}+1
          - \frac{\omega_{N-1} C_{N,s}}{2s}t^{-2s} \Big)
          =
          \frac{\phi_s(x)}{s} \Bigl[\Bigl(1-\frac{C_{N,s}}{C_N s}\Bigr)t^{-2s} + 1-t_0^{-2s}\Bigr].  
\end{align*}
By \eqref{eq:C-N-D-N-ineq} and since 
$$
t^{-2s} \le t^{-\frac{1}{2}}\quad\text{and}\quad \bigl|\frac{1-t_0^{-2s}}{s}\bigr| \le \frac{|\ln t_0|}{2}\max\{1,t_0^{-2s}\} \le \frac{|\ln t_0|}{2}\max\{1,t_0^{-\frac{1}{2}}\}\qquad \text{for $s \in (0,\frac{1}{4}]$,}
$$
it follows that
\begin{equation}\label{bdd-i3}
  |I^s_3(x)|\leq 
\Bigl[D_N t^{-\frac{1}{2}} +\frac{|\ln t_0|}{2}\max\{1,t_0^{-\frac{1}{2}}\}\Bigr] \sup_{s \in (0,\frac{1}{4}]} \|\phi_s\|_{L^\infty(\Omega)},
\end{equation}
where the RHS is a finite constant by Theorem~\ref{uniform-l-infty-section}.
To estimate $I^s_2$, we let $R:=1+\diam(\Omega)$ and note that, by \eqref{eq:C-N-D-N-ineq}, Theorem~\ref{uniform-l-infty-section}, and since $\phi_s \equiv 0$ on $\Omega^c$, 
\begin{align*}
  |I^s_2(x)| &\le \Bigl(\bigl|\frac{C_{N,s}}{s}-C_N\bigr| + \frac{C_{N,s}}{s} \Bigr)\int_{B_R \setminus B_t}\frac{|\phi_s(x+y)|}{|y|^{N+2s}}\,dy\\
  &\le \Bigl( \bigl|\frac{C_{N,s}}{s}-C_N\bigr| + \frac{C_{N,s}}{s} \Bigr)\omega_{N-1}\frac{t^{-2s}-R^{-2s}}{2s}\|\phi_s\|_{L^\infty(\Omega)}\\
             &= \bigl(\bigl|\frac{C_{N,s}}{sC_N}-1\bigr| + \frac{C_{N,s}}{sC_N} \bigr)\frac{t^{-2s}-R^{-2s}}{s}\|\phi_s\|_{L^\infty(\Omega)} \le (2 sD_N  +1)\frac{t^{-2s}-R^{-2s}}{s}\|\phi_s\|_{L^\infty(\Omega)}\\
  &\le \bigl(\frac{D_N}{2}+1\bigr) \: \frac{t^{-2s}-R^{-2s}}{s}\|\phi_s\|_{L^\infty(\Omega)} \qquad \text{for $s \in (0,\frac{1}{4}]$.} 
\end{align*}
Since $\bigl(t^{-2s}-R^{-2s}\bigr)= 2(\ln R- \ln t)s +o(s)$ as $s \to 0^+$, it follows that
\begin{equation}
  |I^s_2(x)| \le \bigl(\frac{D_N}{2}+1\bigr) \sup_{s \in (0,\frac{1}{4}]}\frac{t^{-2s}-R^{-2s}}{s}  \sup_{s \in (0,\frac{1}{4}]} \|\phi_s\|_{L^\infty(\Omega)},
\end{equation}
where the RHS is a finite constant depending on $t$ but not on $s$.

Finally, to estimate $I^s_1(x)$, we note that our choice of $t= \min\{\frac{t_0}{2}, \frac{r}{8}\}$ allows us to apply Lemma~\ref{interior-3s}, which gives that
$$
|\phi_s(x+h)-\phi_s(x)| \leq \tilde C |y|^{3s} \qquad \text{for $s \in (0,\frac{1}{4}]$, $y \in B_t$}
$$
with a constant $\tilde C=\tilde{C}(N,\Omega,k)>0$. Using this together with \eqref{eq:C-N-D-N-ineq} we may estimate
$$
  |I^s_1(x)|\leq \Bigl|C_N-\frac{C_{N,s}}{s}\Bigr| \tilde C \int_{B_t}|y|^{s-N} dy\le \omega_{N-1} \tilde C \bigl(sC_N D_N  \bigr)\frac{t^s}{s} =  2 \tilde C D_N t^s \le 2 \tilde C D_N \quad \text{for $s \in (0,\frac{1}{4}]$.}
$$ 
Going back to \eqref{bdd-i}, we now find that 
$$
\sup_{s \in (0,\frac{1}{4}]}\Bigl|C_N \int_{B_{t_0}}\frac{w(x)-w(x+y)}{|y|^{N+2s}}\,dy-\Bigl(\frac{(-\Delta)^s-1}{s}\Big)\phi_s(x)\Bigr|< \infty.
$$
Since also
$$
\sup_{s \in (0,\frac{1}{4}]}\Bigl\|\Big(\frac{(-\Delta)^s-1}{s}\Big)\phi_s(x)\Bigr\|_{L^\infty(\Omega)} = \sup_{s \in (0,\frac{1}{4}]}\Bigl( \Bigl|\frac{\lambda_s-1}{s}\Bigr|
\bigl\|\phi_s(x) \bigr\|_{L^\infty(\Omega)}\Bigr) <\infty
$$
by Theorems~\ref{lambda-limit-lower-bound} and ~\ref{uniform-l-infty-section}, the claim now follows.
\end{proof}

We now have all tools to complete the proof of Theorem~\ref{main-theorem-introduction}(iii) which we restate here for the reader's convenience.

\begin{thm}
\label{thm-sec-local-equicontinuity}  
The set $\{\phi_{s}\::\: s \in (0,\frac{1}{4}]\}$ is
equicontinuous in every point $x_0 \in \Omega$ and therefore relative compact in $C(K)$ for every compact subset $K \subset \Omega$.  
\end{thm}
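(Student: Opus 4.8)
The plan is to argue by contradiction, using the uniform bound on the auxiliary integral operator from Lemma~\ref{interior-basis2} together with a limiting procedure and the interior regularity theory for weakly singular integral operators from \cite{RefMA} (which applies, in particular, to $\loglap$). So assume that $\{\phi_s:s\in(0,\tfrac14]\}$ fails to be equicontinuous at some $x_0\in\Omega$: there are $\varepsilon_0>0$ and sequences $s_n\in(0,\tfrac14]$, $x_n\to x_0$ with $|\phi_{s_n}(x_n)-\phi_{s_n}(x_0)|\ge\varepsilon_0$ for all $n$. I first reduce to the case $s_n\to 0$. Fix $r:=\min\{1,\delta_\Omega(x_0)\}$; by Corollary~\ref{interior-3s} there is $C=C(N,\Omega,k)$ with $|\phi_s(x)-\phi_s(y)|\le C|x-y|^{3s}$ for all $x,y\in B_{r/8}(x_0)$ and $s\in(0,\tfrac14]$. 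If $\limsup_n s_n>0$, pass to a subsequence with $s_n\ge s_*>0$; then for large $n$ one has $x_n\in B_{r/8}(x_0)$ and hence $|\phi_{s_n}(x_n)-\phi_{s_n}(x_0)|\le C|x_n-x_0|^{3s_*}\to 0$, a contradiction. So, after passing to a subsequence, $s_n\to 0$ (and note that $C|x_n-x_0|^{3s_n}\ge\varepsilon_0$ then forces $s_n|\ln|x_n-x_0||$ to stay bounded).

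Next I pass to the limit. Fix $\rho\in(0,1)$ with $B_{3\rho}(x_0)\subset\Omega$, so $\delta_\Omega(x)>2\rho$ for $x\in B_\rho(x_0)$. Lemma~\ref{interior-basis2} yields a constant $C_1>0$ with
\[
\Bigl\|\,C_N\!\int_{B_\rho}\frac{\phi_{s_n}(\cdot)-\phi_{s_n}(\cdot+y)}{|y|^{N+2s_n}}\,dy\,\Bigr\|_{L^\infty(B_\rho(x_0))}\le C_1\qquad\text{for all }n .
\]
By Theorem~\ref{uniform-l-infty-section} the $\phi_{s_n}$ are uniformly bounded in $L^\infty$, and by Lemma~\ref{prop-b} and Theorem~\ref{lambda-limit-lower-bound} we may pass to a further subsequence with $\phi_{s_n}\to\phi_0$ in $L^2(\Omega)$ and a.e., where $\phi_0$ is an $L^2$-normalized eigenfunction of $\loglap$; the uniform $L^\infty$-bound gives $\phi_0\in L^\infty(\Omega)$. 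Testing the displayed bound against $\varphi\in C^\infty_c(B_{\rho/2}(x_0))$, moving the kernel onto $\varphi$, and using that $|y|^{-N-2s_n}1_{B_\rho}\to|y|^{-N}1_{B_\rho}$ with the singularity controlled by $|\varphi(x)-\varphi(x-y)|\lesssim|y|$, one passes to the limit and obtains $\loglap\phi_0\in L^\infty(B_{\rho/2}(x_0))$. By the interior regularity estimate of \cite{RefMA} for $\loglap$ it then follows that $\phi_0\in C^\sigma(\overline{B_{\rho/2}(x_0)})$ for some $\sigma\in(0,1)$; in particular $\phi_0$ is continuous at $x_0$.

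It remains to transfer this regularity back to the sequence, i.e.\ to show $\phi_{s_n}\to\phi_0$ \emph{uniformly} on $\overline{B_{\rho/2}(x_0)}$; this is the crux of the proof. Following the splitting used in the proof of Corollary~\ref{interior-3s}, write $\phi_{s_n}=u_n+v_n$ on $B_\rho(x_0)$ with $u_n(x)=\int_{B_\rho(x_0)}F_{s_n}(x-z)\lambda_{s_n}\phi_{s_n}(z)\,dz$ the Riesz potential and $v_n=\phi_{s_n}-u_n$, which is $(-\Delta)^{s_n}$-harmonic in $B_\rho(x_0)$. The proof of Lemma~\ref{lemma-jarohs-saldana-weth}, together with $\tau_{N,s}\to 0$ as $s\to 0$, in fact delivers a uniform Lipschitz bound, so $\{v_n\}$ is bounded in $C^{0,1}(\overline{B_{\rho/2}(x_0)})$ and hence precompact in $C(\overline{B_{\rho/2}(x_0)})$; on the other hand, by Lemma~\ref{C-N-s-asymptotics}-type computations the truncated Riesz kernels $F_{s_n}1_{B_\rho(x_0)}$ form an approximate identity as $s_n\to 0$ (mass tending to $1$ and concentrating at the origin), so $u_n\to\phi_0$ in $L^2$, whence $v_n\to 0$ in $L^2$ and therefore in $C(\overline{B_{\rho/2}(x_0)})$. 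Combining this with the uniform convergence of $F_{s_n}1_{B_\rho(x_0)}\ast\phi_0$ to the continuous function $\phi_0$ and a short iteration exploiting the uniform $L^\infty$-bound upgrades $\phi_{s_n}\to\phi_0$ to uniform convergence on $\overline{B_{\rho/2}(x_0)}$. Then
\[
|\phi_{s_n}(x_n)-\phi_{s_n}(x_0)|\le|\phi_{s_n}(x_n)-\phi_0(x_n)|+|\phi_0(x_n)-\phi_0(x_0)|+|\phi_0(x_0)-\phi_{s_n}(x_0)|\longrightarrow 0,
\]
contradicting $|\phi_{s_n}(x_n)-\phi_{s_n}(x_0)|\ge\varepsilon_0$. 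Hence $\{\phi_s:s\in(0,\tfrac14]\}$ is equicontinuous at every $x_0\in\Omega$, and, being bounded in $L^\infty(\Omega)$ by Theorem~\ref{uniform-l-infty-section}, it is relatively compact in $C(K)$ for every compact $K\subset\Omega$ by the Arzelà–Ascoli theorem.

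The main obstacle is the last step. The ``error'' between the auxiliary operator with kernel $|y|^{-N-2s}1_{B_\rho}$ and the logarithmic Laplacian — equivalently, the quantity in \eqref{eq:difference-loglap-frac} — is \emph{not} uniformly bounded as $s\to 0^+$, which is precisely why \cite{RefMA} cannot be applied directly to the family $\{\phi_{s_n}\}$; instead one is forced to identify the limit $\phi_0$, establish its regularity, and then carry it back to the sequence, and the technical heart is controlling the Riesz-potential part $u_n$ (only $C^{s_n}$ a priori) through its approximate-identity structure while the remaining $(-\Delta)^{s_n}$-harmonic part $v_n$ is kept under control by the uniform Lipschitz bound.
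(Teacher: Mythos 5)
Your overall strategy --- identify the $L^2$-limit $\phi_0=\phi_{k,L}$, prove its interior regularity, and then transfer continuity back to the sequence --- founders at exactly the step you call the crux, and the gap is not merely technical. The decomposition $\phi_{s_n}=u_n+v_n$ and the uniform Lipschitz bound on $v_n$ (via \eqref{eq:jarohs-saldana-weth-est-proof} and $\tau_{N,s_n}\to 0$) are fine, and $v_n\to 0$ uniformly on $\overline{B_{\rho/2}(x_0)}$ does follow from equi-Lipschitz plus $L^2$-convergence. But the remaining claim, that $u_n=F_{s_n}1_{B_\rho}\ast(\lambda_{s_n}\phi_{s_n}1_{B_\rho})\to\phi_0$ \emph{uniformly}, cannot be obtained from the approximate-identity structure together with the uniform $L^\infty$-bound and $L^2$-convergence alone. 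The reason is quantitative: a perturbation $h_n$ of height $\varepsilon_0$ supported on a ball of radius $w_n=e^{-1/s_n}$ is negligible in $L^2$ and uniformly bounded, yet
\[
\bigl(F_{s_n}1_{B_\rho}\ast h_n\bigr)(x_0)\;\gtrsim\;\varepsilon_0\,\kappa_{N,s_n}\,\omega_{N-1}\frac{w_n^{2s_n}}{2s_n}\;\longrightarrow\;\varepsilon_0\,e^{-2}>0,
\]
because the Riesz kernel at order $s_n\to 0$ assigns mass bounded away from zero to balls of radius $e^{-c/s_n}$. So $L^2$-smallness does not convert into $L^\infty$-smallness through $F_{s_n}\ast{}$; equally, the ``short iteration'' is non-contractive since $\|F_{s_n}1_{B_\rho}\|_{L^1}\to 1$ and $\lambda_{s_n}\to1$, so the self-referential inequality for $\|\phi_{s_n}-\phi_0\|_{L^\infty}$ degenerates to $\delta_n\le(1+o(1))\delta_n+o(1)$. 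In short, your argument cannot exclude the one failure mode it must exclude, namely oscillation of $\phi_{s_n}$ of fixed amplitude concentrating at scales that shrink superexponentially in $1/s_n$.

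The paper's proof avoids the limit function entirely and instead extracts a contradiction \emph{pointwise} from Lemma~\ref{interior-basis2}. Assuming the oscillation over $B_t(x_0)$ does not tend to zero uniformly in $s$, one picks scales $t_n\to 0$ and parameters $s_n$ realizing oscillation $\ge\eps-\delta$ on $B_{t_n}$ (Corollary~\ref{interior-3s} then forces $s_n\to0$, as you also observed), adds a small Lipschitz bump $\psi_n$ of height $2\delta$ supported in $B_{2t_n}$, and evaluates the truncated integral at a maximum point $x_n$ of $\phi_n+\psi_n$. The decisive point is that the annulus $B_{t_0}\setminus B_{3t_n}$ carries kernel mass $c_n=\int_{B_{t_0}\setminus B_{3t_n}}|y|^{-N-2s_n}\,dy\ge\omega_{N-1}\log(t_0/(3t_n))\to\infty$, at least half of which sits where $\phi_n(x_n)-\phi_n\ge(\eps-\delta)/2$; this makes $\int_{B_{t_0}}\frac{\phi_n(x_n)-\phi_n(x_n+y)}{|y|^{N+2s_n}}\,dy$ diverge, contradicting the uniform bound $C_1$. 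You have the right key lemma (Lemma~\ref{interior-basis2}), but using it only distributionally to conclude $\loglap\phi_0\in L^\infty$ discards precisely the divergent pointwise information that drives the proof. To repair your argument you would need to replace the approximate-identity/iteration step by such a touching-point (growth-lemma) argument --- at which point you have essentially reconstructed the paper's proof and the detour through $\phi_0$ and \cite{RefMA} becomes unnecessary.
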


\begin{proof}
  We only have to prove the equicontinuity of the set $M:= \{\phi_s\::\: s \in (0,\frac{1}{4}]\}$ in every point $x_0 \in \Omega$. Once this is shown, it follows from Theorem~\ref{uniform-l-infty-section} and the Arzela-Ascoli Theorem that, for every compact subset $K \subset \Omega$, the set $M$ is relative compact when regarded as a subset of $C(K)$.  

Arguing by contradiction, we now assume that there exists a point $x_0 \in \Omega$ such that $M$ is not equicontinuous at $x_0$, which means that 
  \begin{equation}
    \label{eq:positive-osc-limit}
\lim_{t \to 0^+} \sup_{s \in (0,\frac{1}{4}]} \underset{B_t(x_0)}{\osc} \phi_s= \eps>0.
  \end{equation}
Here, we note that this limit exists since the function
$$
(0,\infty) \to [0,\infty), \qquad t \mapsto \sup_{s \in (0,\frac{1}{4}]} \underset{B_t(x_0)}{\osc} \phi_s
$$
is nondecreasing. Without loss of generality, to simplify the notation, we may assume that $x_0 = 0 \in \Omega$.
We first choose $\delta>0$ sufficiently small so that
\begin{equation}
  \label{eq:delta-small-condition}
\frac{\eps-\delta}{2^{N+2}}-2 \cdot 3^{N} \delta > 0
\end{equation}
The relevance of this condition will become clear later. Moreover, we choose $t_0>0$ sufficiently small so that
\begin{equation}
  \label{eq:3t-0-est}
B_{3t_0} \subset \Omega  
\end{equation}
and 
\begin{equation}
    \label{eq:positive-osc-limit-cons}
\eps \le \sup_{s \in (0,\frac{1}{4}]} \underset{B_t}{\osc}\, \phi_s \le \eps +\delta  \qquad \text{for $0 < t \le 2t_0$.}
\end{equation}
By Lemma~\ref{interior-basis2} and \eqref{eq:3t-0-est}, there exists a constant $C_1>0$ with the property that 
\begin{equation}
\label{bound-diff-operator}  
  \Bigl|  \int_{B_{t_0}} \frac{\phi_s(x)-\phi_s(x+ y)}{|y|^{N+2s}}dy\Bigr| \le C_1 \qquad \text{for all $x \in B_{t_0}$, $s \in (0,\frac{1}{4}].$}
\end{equation}
Next, we choose a sequence of numbers $t_n \in (0,\frac{t_0}{5})$ with $t_n \to 0^+$ as $n \to \infty$. By \eqref{eq:positive-osc-limit-cons}, there exists a sequence $(s_n)_n  \subset (0,\frac{1}{4}]$ such that
\begin{equation}
  \label{eq:eps-half-lower-bound}
\underset{B_{t_n}}{\osc}\, \phi_{s_n} \ge \eps-\delta  \qquad \text{for all $n \in \N$,}
\end{equation}
whereas, by Lemma~\ref{interior-3s}, we have
$$
\underset{B_{t_n}}{\osc}\, \phi_{s_n} \le C_2 (2t_n)^{3s_n} \qquad \text{for all $n \in \N$ with a constant $C_2>0$.}
$$
Hence,
\begin{equation}
  \label{eq:t_n-power-s-n-est}
t_n^{s_n} \ge 2^{-s_n} \Bigl(\frac{1}{C_2} \underset{B_{t_n}}{\osc}\, \phi_{s_n}\Bigr)^{\frac{1}{3}} \ge 2^{-\frac{1}{4}} \Bigl( \frac{\eps-\delta}{C_2}\Bigr)^{\frac{1}{3}}  \qquad \text{for all $n \in \N$}
\end{equation}
    which implies, in particular, that
    \begin{equation}
      \label{eq:s-n-to-zero}
      s_n \to 0 \qquad \text{as $n \to \infty$.}
    \end{equation}
    To simplify the notation, we now set $\phi_n := \phi_{s_n}$. By \eqref{eq:eps-half-lower-bound}, we may write 
    \begin{equation}
      \label{eq:introduce-d-n}
\phi_n(\overline{B_{t_n}}) = [d_n-r_n,d_n+r_n] \qquad \text{for $n \in \N$ with some $d_n \in \R$ und $r_n \ge \frac{\eps-\delta}{2}$.}
    \end{equation}
Together with \eqref{eq:positive-osc-limit-cons} and the fact that $\overline{B_{t_n}} \subset B_{2t_0}$, we deduce that 
    \begin{equation}
      \label{eq:B-t-0-interval-est}
\phi_n(B_{2t_0}) \subset [d_n-\frac{\eps-3\delta}{2}\:,\:d_n+\frac{\eps+3\delta}{2}].
    \end{equation}
Moreover, we let
$$
c_n:= \int_{B_{t_0}\setminus B_{3t_n}}|y|^{-N-2s_n}\,dy = \omega_{N-1} \frac{(3t_n)^{-2s_n}-t_0^{-2s_n}}{2s_n} \qquad \text{for $n \in \N$,}
$$
and we note that
\begin{equation}
  \label{eq:c-n-infty}
c_n \to \infty\qquad \text{as $n \to \infty$}
\end{equation}
since $c_n \ge \omega_{N-1}\bigl(\log t_0 - \log (3 t_n)\bigr)$ for $n \in \N$ and $t_n\to 0$ for $n\to\infty$. We also put
$$
A_+^n : = \{y \in B_{t_0}\setminus B_{3 t_n}\::\: \phi_n(y) \ge d_n\}\quad \text{and}\quad 
A_-^n : = \{y \in B_{t_0}\setminus B_{3 t_n}\::\: \phi_n(y) \le d_n\}.
$$
Since
$$
c_n \le \int_{A_+^n} |y|^{-N-2s_n}\,dy + \int_{A_-^n} |y|^{-N-2s_n}\,dy \qquad \text{for all $n \in \N$,}
$$
we may pass to a subsequence such that
$$
\int_{A_+^n} |y|^{-N-2s_n}\,dy \ge \frac{c_n}{2} \quad \text{for all $n \in \N$}\qquad \text{or}\qquad
\int_{A_-^n} |y|^{-N-2s_n}\,dy \ge \frac{c_n}{2} \quad \text{for all $n \in \N$.}
$$
Without loss of generality, we may assume that the second case holds (otherwise we may replace $\phi_n$ by $-\phi_n$ and $d_n$ by $-d_n$). 
We then define the Lipschitz function $\psi_n \in C_c(\R^N)$ by
$$
\psi_n (x) = \left \{
  \begin{aligned}
    &2 \delta ,&&\qquad |x| \le t_n\\
    &0,&& \qquad |x| \ge 2 t_n\\
    &\frac{2 \delta}{t_n}(2 t_n - |x|),&& \qquad t_n \le |x| \le 2 t_n. 
  \end{aligned}
\right.
$$
We also let $\tau_n:= \phi_n + \psi_n$ for all $n \in \N$. By \eqref{eq:B-t-0-interval-est}, we have 
$$
\tau_n = \phi_n \le d_n + \frac{\eps+3\delta}{2}\le d_n + r_n +2 \delta \qquad \text{in $B_{2t_0} \setminus B_{2t_n}$.}
$$
Moreover, since $d_n+r_n \in \phi_n(\overline{B_{t_n}})$ by \eqref{eq:introduce-d-n}, we have 
$$
d_n+r_n + 2 \delta  \in \tau_n(\overline{B_{t_n}}) \subset \tau_n(B_{2t_n}).
$$
Consequently, $\max \limits_{B_{2t_0}}\, \tau_n$ is attained at a point $x_n \in B_{2t_n}$  with
$$
\tau_n(x_n) \ge d_n+r_n + 2 \delta 
$$
which implies that 
\begin{equation}
  \label{eq:phi-n-x-n-lower-bound}
\phi_n(x_n) \ge d_n +r_n \ge d_n + \frac{\eps-\delta}{2}.
\end{equation}
By \eqref{bound-diff-operator} and since $B_{3t_n} \subset B_{t_0(x_n)}$ for $n \in \N$ by construction, we have that
\begin{align}
  C_1 &\ge \int_{B_{t_0}} \frac{\phi_n(x_n)-\phi_n(x_n+ y)}{|y|^{N+2s_n}}dy = \int_{B_{t_0}(x_n)} \frac{\phi_n(x_n)-\phi_n(y)}{|x_n-y|^{N+2s_n}}dy \nonumber\\
      &= \int_{B_{3 t_n}} \frac{\phi_n(x_n)-\phi_n(y)}{|x_n-y|^{N+2s_n}}dy +
      \int_{B_{t_0}(x_n) \setminus B_{3 t_n}} \frac{\phi_n(x_n)-\phi_n(y)}{|x_n-y|^{N+2s_n}}dy. \label{C-1-first-est}
\end{align}
To estimate the first integral, we note that, by definition of the function $\psi_n$,
$$
|\psi_n(x)-\psi_n(y)| \le \frac{2 \delta}{t_n}|x-y| \qquad \text{for all $x,z \in \R^N$.}
$$
Moreover, by the choice of $x_n$ we have $\tau_n(x_n) \ge \tau_n(y)$ for all $y \in B_{3t_n}$. Consequently,
\begin{align}
  &\int_{B_{3 t_n}} \frac{\phi_n(x_n)-\phi_n(y)}{|x_n-y|^{N+2s_n}}dy= \int_{B_{3 t_n}} \frac{\tau_n(x_n)-\tau_n(y)}{|x_n-y|^{N+2s_n}}dy-
    \int_{B_{3 t_n}} \frac{\psi_n(x_n)-\psi_n(y)}{|x_n-y|^{N+2s_n}}dy \nonumber\\
    &\ge - \int_{B_{3 t_n}} \frac{\psi(x_n)-\psi(y)}{|x_n-y|^{N+2s_n}}dy
      \ge - \frac{2 \delta}{t_n}\int_{B_{3 t_n}} |x_n-y|^{1-N-2s_n}dy \ge - \frac{2 \delta}{t_n}\int_{B_{3 t_n}} |y|^{1-N-2s_n}dy
      \nonumber\\
  &= - \frac{3^{1-2s_n} \omega_{N-1} 2 \delta t_n^{-2s_n}}{1-2s_n} \ge  - 12 \omega_{N-1} \delta t_n^{-2s_n} \ge -C_3
 \label{C-3-est}   
\end{align}
with a constant $C_3>0$ independent of $n$. Here we used \eqref{eq:bathtub-est} and \eqref{eq:t_n-power-s-n-est}.

To estimate the second integral in \eqref{C-1-first-est} we first note, since $x_n \in B_{2t_n}$, we have that 
$$
2|y| \ge  |y-x_n| \ge \frac{|y|}{3} \qquad \text{for every $n \in \N$ and $y \in \R^N \setminus B_{3 t_n}$.}
$$
Moreover, by \eqref{eq:positive-osc-limit-cons}, \eqref{eq:B-t-0-interval-est}, and \eqref{eq:phi-n-x-n-lower-bound} we have 
$$
\eps +\delta \ge \phi_n(x_n)-\phi_n(y) \ge d_n + \frac{\eps-\delta}{2} - \phi_n(y) \ge -2 \delta \qquad \text{for $y \in B_{t_0}(x_n) \subset B_{2t_0}$.}
$$
Consequently, combining \eqref{C-1-first-est} and \eqref{C-3-est}, using again \eqref{eq:phi-n-x-n-lower-bound}, we may estimate as follows:
\begin{align*}
  C_1 &+ C_3\ge  \int_{B_{t_0}(x_n) \setminus B_{3 t_n}} \frac{\phi_n(x_n)-\phi_n(y)}{|y-x_n|^{N+2s_n}}dy\\
  &\ge   \int_{B_{t_0}(x_n) \setminus B_{3 t_n}} \frac{[\phi_n(x_n)-\phi_n]_+(y)}{|y-x_n|^{N+2s_n}}dy - 2 \delta \int_{B_{t_0}(x_n) \setminus B_{3 t_n}} |y-x_n|^{-N-2s_n}dy  \\
  &\ge   \frac{1}{2^{N+2s_n}}\int_{B_{t_0}(x_n) \setminus B_{3 t_n}} \frac{[\phi_n(x_n)-\phi_n]_+(y)}{|y|^{N+2s_n}}dy - 2 \cdot 3^{N+2s_n} \delta \int_{B_{t_0}(x_n) \setminus B_{3 t_n}} |y|^{-N-2s_n}dy  \\
           &\ge   \frac{1}{2^{N+2s_n}}\Bigl(\int_{B_{t_0} \setminus B_{3 t_n}} \frac{[\phi_n(x_n)-\phi_n]_+(y)}{|y|^{N+2s_n}}dy - \int_{B_{t_0} \setminus B_{t_0}(x_n)} \frac{[\phi_n(x_n)-\phi_n]_+(y)}{|y|^{N+2s_n}}dy\Bigr)\\
 &\qquad- 2 \cdot 3^{N+2s_n} \delta \Bigl( \int_{B_{t_0} \setminus B_{3 t_n}} |y|^{-N-2s_n}dy +\int_{B_{t_0}(x_n) \setminus B_{t_0}} |y|^{-N-2s_n}dy \Bigr)  \\
           &\ge   \frac{1}{2^{N+2s_n}}\Bigl(r_n \int_{A_n^-}|y|^{-N-2s_n} dy - (\eps+\delta)\int_{B_{t_0} \setminus B_{t_0}(x_n)} |y|^{-N-2s_n}dy\Bigr) \\
&\qquad- 2 \cdot 3^{N+2s_n} \delta \Bigl(c_n + \int_{B_{t_0}(x_n) \setminus B_{t_0}} |y|^{-N-2s_n}dy\Bigr)\\
           &\ge   \Bigl(\frac{r_n}{2 \cdot 2^{N+2s_n}}-2 \cdot 3^{N+2s_n} \delta\Bigr)c_n\\
 &\qquad  - \frac{(\eps+\delta)}{2^{N+2s_n}}\!\int_{B_{t_0} \setminus B_{t_0-2t_n}} |y|^{-N-2s_n}dy\:-\: 2\cdot3^{N+2s_n} \delta \!\int_{B_{t_0+2t_n} \setminus B_{t_0}} |y|^{-N-2s_n}dy\\
           &\ge   \Bigl(\frac{\eps-\delta}{2^{N+2+2s_n}}-2 \cdot 3^{N+2s_n} \delta\Bigr)c_n - o(1) = \Bigl(\frac{\eps-\delta}{2^{N+2}}-2 \cdot 3^{N} \delta \,+\, o(1)\Bigr)c_n - o(1) \qquad \text{as $n \to \infty$,}
\end{align*}
where we used \eqref{eq:introduce-d-n}. By our choice of $\delta>0$ satisfying \eqref{eq:delta-small-condition}, we arrive at a contradiction to \eqref{eq:c-n-infty}. 
The proof is thus finished.
\end{proof}

\section{Uniform boundary decay}
\label{sec:unif-bound-decay}

Throughout this section, we assume that $\Omega$ is a bounded Lipschitz domain satisfying a uniform exterior sphere condition. By definition, this means that there exists a radius $R_0>0$ such that for every point $x_* \in \partial \Omega$ there exists a ball $B^{x_*}$ of radius $R_0$ contained in $\R^N \setminus \overline{\Omega}$ and with $\overline {B^{x_*}} \cap \overline \Omega = \{x^*\}$.\\

We first note the following boundary decay estimate. 

\begin{lemma}\label{boundary-decay-ros-oton-serra}
There is a constant $C=C(N,\Omega,k)>0$ such that 
\begin{equation}
  \label{eq:uniform-constants-estimates}
|\phi_s(x)|\leq C\delta^s_{\Omega}(x)\quad\text{for $x\in \R^N$, $s \in (0,\frac{1}{4}]$.}
\end{equation}
\end{lemma}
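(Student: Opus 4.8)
The plan is to construct an explicit supersolution (barrier) near the boundary, using the uniform exterior sphere condition, and then to apply a comparison principle. First I would recall the classical computation for the fractional torsion-type function: if $B = B_{R_0}(z)$ is an exterior ball touching $\partial\Omega$ at $x_*$, then the function $u_B(x) := \bigl(\dist(x,B)\bigr)^s_+ = \bigl((|x-z|-R_0)^+\bigr)^s$ — or, more conveniently, a function comparable to $\delta_\Omega^s$ built from such exterior balls — is a classical solution of $(-\Delta)^s u_B = 0$ in a neighborhood of $x_*$ inside $\Omega$, while $(-\Delta)^s$ of the standard barrier $w(x)=(R_0^2-|x-z|^2)^s_+$ on the \emph{interior} ball is the constant $c_{N,s}>0$. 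The key technical point is to track the $s$-dependence of all constants and verify they stay bounded and bounded away from zero as $s\to 0^+$, using Lemma~\ref{C-N-s-asymptotics} and the explicit form of $C_{N,s}$; here the $s\in(0,\tfrac14]$ restriction keeps everything uniform.

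Concretely, the steps in order: (1) Fix $x_*\in\partial\Omega$ and the exterior ball $B^{x_*}=B_{R_0}(z)$. On an annular collar $U = (B_{2R_0}(z)\setminus \overline{B_{R_0}(z)})\cap\Omega$ of uniform width, build the barrier $\Psi_s(x) = M\bigl(|x-z|-R_0\bigr)^s$ for $x\in \R^N\setminus B_{R_0}(z)$, extended by $0$ inside $B_{R_0}(z)$ and cut off (or dominated by a constant) away from $x_*$. A direct computation — or citing the known estimates of Ros-Oton–Serra type, which are stated in the excerpt's references — shows $(-\Delta)^s\Psi_s \ge -C_* M$ in $U$ with $C_*$ \emph{uniform in $s\in(0,\tfrac14]$} (using $\sup_{s\le 1/4} C_{N,s}<\infty$ and Lemma~\ref{C-N-s-asymptotics} to control the lower-order terms coming from the cutoff region). (2) Since $\|\phi_s\|_{L^\infty(\Omega)}\le C$ uniformly by Theorem~\ref{uniform-l-infty-section}, and $|\lambda_s|\le 1+\tfrac{C}{4}$ is uniformly bounded by Lemma~\ref{upper-est-lambda_k}, we have $|(-\Delta)^s\phi_s| = |\lambda_s\phi_s|\le C'$ in $\Omega$ with $C'$ uniform. (3) Choose $M = M(N,\Omega,k)$ large enough (uniformly in $s$) that $(-\Delta)^s(\Psi_s \pm \phi_s) \ge 0$ in $U$, while $\Psi_s\pm\phi_s\ge 0$ outside $U$ (on $B_{R_0}(z)$ we have $\Psi_s=0$ and $\phi_s$ is $0$ or controlled; on the far part $\Psi_s$ dominates $\|\phi_s\|_\infty$). (4) Apply the maximum principle for $(-\Delta)^s$ on $U$ to conclude $|\phi_s|\le \Psi_s$ in $U$, hence $|\phi_s(x)|\le M\,\dist(x,B^{x_*})^s \le M\,\delta_\Omega(x)^s$ near $x_*$; a covering/compactness argument over $\partial\Omega$ together with the uniform bound $\|\phi_s\|_\infty\le C$ on the bulk gives \eqref{eq:uniform-constants-estimates} on all of $\R^N$ (noting $\delta_\Omega(x)^s$ is bounded below by a positive constant away from $\partial\Omega$ since $\delta_\Omega\le\diam\Omega$ forces $s\le 1/4$ powers to stay controlled, and $\delta_\Omega^s\to1$ is handled by absorbing into $C$).

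The main obstacle is step (1): ensuring the barrier estimate $(-\Delta)^s\Psi_s\ge -C_* M$ holds with $C_*$ \emph{independent of $s$} as $s\to0^+$. The bad region is where the cutoff destroys the harmonicity of $(|x-z|-R_0)^s_+$; there one gets contributions of the form $C_{N,s}\int (\cdots)|x-y|^{-N-2s}\,dy$, and because $C_{N,s}\sim sC_N$ one must check that the divergence of the integral as the exponent $\to N$ is exactly compensated by this factor of $s$ — this is precisely the mechanism quantified in Lemma~\ref{C-N-s-asymptotics} and in the $\delta$-decomposition used in Section~\ref{section-regularity-of-LD}, so I would reuse that bookkeeping here rather than redo it. A secondary subtlety is that the natural barrier gives decay $\dist(x,B^{x_*})^s$ rather than $\delta_\Omega(x)^s$; these are comparable under the uniform exterior sphere condition (with a constant depending only on $R_0$ and $\diam\Omega$), which closes the gap.
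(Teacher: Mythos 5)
Your overall strategy -- an explicit radial barrier built on the exterior sphere condition, a comparison principle, and careful tracking of the $s$-dependence of constants via $C_{N,s}\sim sC_N$ -- is exactly the route the paper takes; in fact the paper's own proof is essentially a two-line reduction (observe that $(-\Delta)^s\phi_s=\lambda_s\phi_s$ with right-hand side uniformly bounded by Theorem~\ref{uniform-l-infty-section}, then cite \cite[Lemma 2.7]{Ros14} for the barrier argument and \cite[Appendix]{RefJ3} for the uniformity of the constant in $s$), so you are reconstructing precisely the content of those references.

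However, as written your argument has a genuine gap at the junction of steps (1) and (3). In step (1) you only claim $(-\Delta)^s\Psi_s\ge -C_*M$ in the collar $U$, while in step (3) you need $(-\Delta)^s(\Psi_s\pm\phi_s)\ge 0$ there, i.e.\ $(-\Delta)^s\Psi_s\ge |\lambda_s\phi_s|$, which is a \emph{positive} lower bound. From $(-\Delta)^s\Psi_s\ge -C_*M$ and $|(-\Delta)^s\phi_s|\le C'$ you only get $(-\Delta)^s(\Psi_s\pm\phi_s)\ge -C_*M-C'$, and enlarging $M$ makes this worse, not better; the comparison principle cannot be applied. What is actually required is a supersolution $\varphi$ with $(-\Delta)^s\varphi\ge 1$ in the annular collar, $\varphi\equiv 0$ on the exterior ball, $\varphi\le C(\mathrm{dist}(\cdot,B^{x_*}))^s$ near the contact point and $\varphi\ge c>0$ far away -- this is the content of \cite[Lemma 2.6]{Ros14} -- and then $M\varphi$ with $M\ge C'$ does the job. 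The strict positivity is not a triviality: for the truncated power $\min\{(\,|x-z|-R_0)_+^s,\rho^s\}$ it comes from the far-field contribution of the truncation, where the $\sim 1/(2s)$ divergence of $\int_{|y|>\rho}|y|^{-N-2s}\,dy$ is compensated by $C_{N,s}\sim sC_N$, yielding a lower bound bounded away from $0$ uniformly for $s\in(0,\frac14]$. This computation is exactly the "hard part" of the uniformity claim (and is what \cite[Appendix]{RefJ3} carries out); your write-up replaces it with an inequality of the wrong sign, so the proof does not close as stated. The remaining points (comparability of $\mathrm{dist}(x,B^{x_*})^s$ and $\delta_\Omega(x)^s$, the covering of $\partial\Omega$, absorbing the bulk via Theorem~\ref{uniform-l-infty-section}) are fine, modulo the minor remark that the comparison must be applied in the weak sense since $\phi_s$ is only a weak solution.
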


\begin{proof}
  We note that $\phi_s$ is a weak solution of
  $$
  (-\Delta)^s \phi_s = f_s \quad \text{in $\Omega$,}\qquad \phi_s \equiv 0 \quad \text{in $\Omega^c$,}
  $$
  where the functions $f_s := \lambda_s \phi_s$, $s \in (0, \frac{1}{4}]$ are uniformly bounded in $L^\infty(\Omega)$ by Theorem~\ref{uniform-l-infty-section}. Therefore, the decay estimate in \eqref{eq:uniform-constants-estimates} essentially follows from \cite[Lemma 2.7]{Ros14}, although it is not stated there that the constant $C$ can be chosen independently of $s$. For an alternative proof of the latter fact, see \cite[Appendix]{RefJ3}.
We stress here that the use of radial barrier functions as in \cite{Ros14} and  \cite[Appendix]{RefJ3} only requires a uniform exterior sphere condition and no further regularity assumptions on $\Omega$.   
\end{proof}

For $\delta>0$, we now consider the one-sided neighborhood of the boundary
$$
\Omega^\delta:= \{x \in \Omega\::\: \delta_\Omega(x) < \delta\}
$$

The main result of the present section is the following. 

\begin{thm}
  \label{thm-unif-boundary-decay-section}
  We have
  $$
  \lim_{\delta \to 0^+} \sup_{s \in (0,\frac{1}{4}]}\|\phi_s\|_{L^\infty(\Omega^\delta)} = 0.
  $$
  In other words, for every $\eps>0$, there exists $\delta_\eps>0$ with the property that
  $$
  |\phi_s(x)| \le \eps \qquad \text{for all $s \in (0,\frac{1}{4}]$, $x \in \Omega^{\delta_\eps}$.}
  $$
\end{thm}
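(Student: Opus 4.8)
The plan is to compare $|\phi_s|$ near $\partial\Omega$ with a barrier that does not degenerate as $s\to0^+$. First observe that for $s$ bounded away from $0$ there is nothing to prove: by Lemma~\ref{boundary-decay-ros-oton-serra}, for any fixed $s_1\in(0,\tfrac14]$ we have $\|\phi_s\|_{L^\infty(\Omega^\delta)}\le C\delta^{s}\le C\delta^{s_1}$ for all $\delta\in(0,1)$ and $s\in[s_1,\tfrac14]$, and the right-hand side $\to0$ as $\delta\to0^+$. So the genuine difficulty is the regime $s\to0^+$, where $\delta_\Omega^s$ no longer decays. There I would use the logarithmic barrier from \cite[Theorem~1.11]{RefT1}: relying only on the uniform exterior-sphere condition, that construction provides $\tau\in(0,\tfrac12)$, $R>0$, $\delta_0\in(0,1)$ and $h\in C(\R^N)$, all depending only on $N$ and the exterior-sphere radius, with $h\ge0$, $h\equiv0$ on $\R^N\setminus\overline\Omega$, $h(x)=\bigl(\ln\tfrac{R}{\delta_\Omega(x)}\bigr)^{-\tau}$ on $\Omega^{\delta_0}$, and $h\equiv\bigl(\ln\tfrac R{\delta_0}\bigr)^{-\tau}=:m_0>0$ on $\{\delta_\Omega\ge\delta_0\}$; in particular $0\le h\le m_0$ and $h(x)\to0$ uniformly as $\delta_\Omega(x)\to0^+$. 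Moreover it is shown in \cite{RefT1} that, after shrinking $\delta_0$, $\loglap h(x)\ge c_0\bigl(\ln\tfrac1{\delta_\Omega(x)}\bigr)^{1-\tau}$ on $\Omega^{\delta_0}$ for some $c_0>0$, so that $\loglap h$ is positive and in fact unbounded near $\partial\Omega$.

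The key step — and, I expect, the main obstacle — is to show that $h$ is a supersolution for the fractional Laplacian near $\partial\Omega$, with constants uniform in $s$: after a further shrinking of $\delta_0$ depending only on $N$, $\Omega$, $k$,
$$
(-\Delta)^s h(x)\;\ge\;\lambda_{k,s}(\Omega)\,h(x)\qquad\text{for all }x\in\Omega^{\delta_0}\text{ and all }s\in(0,\tfrac14].
$$
To prove this I would revisit the computation of $\loglap h$ in \cite{RefT1}, now for $(-\Delta)^s$ and tracking the $s$-dependence of the kernel via $\tfrac{C_{N,s}}{sC_N}=1+O(s)$ from Lemma~\ref{C-N-s-asymptotics}: writing $(-\Delta)^s h(x)=\tfrac{C_{N,s}}2\int_{\R^N}\tfrac{2h(x)-h(x+z)-h(x-z)}{|z|^{N+2s}}\,dz$ and splitting the kernel at $|z|=1$, the $\{|z|\ge1\}$-part equals $h(x)+O(s)$ uniformly on $\Omega^{\delta_0}$, while the $\{|z|<1\}$-part, driven near $\partial\Omega$ by the region where $h(x+z)=0$, carries a \emph{positive} contribution of order $s\,h(x)\ln\tfrac1{\delta_\Omega(x)}$ — the very mechanism responsible for the unboundedness of $\loglap h$. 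Hence $(-\Delta)^s h(x)-h(x)\ge c_1 s\,h(x)\ln\tfrac1{\delta_\Omega(x)}+o(s)$ with $c_1>0$, whereas $\lambda_{k,s}(\Omega)-1=O(s)$ by Lemma~\ref{upper-est-lambda_k} and Theorem~\ref{lambda-limit-lower-bound}, so $(-\Delta)^s h(x)-\lambda_{k,s}(\Omega)h(x)\ge\tfrac{c_1}2 s\,h(x)\ln\tfrac1{\delta_0}>0$ once $\delta_0$ is small enough (one also has to check that the transition of $h$ to the constant $m_0$ across $\{\delta_\Omega=\delta_0\}$ preserves the sign, again by taking $\delta_0$ small). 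The point is that this is exactly the near-boundary control of a difference of the type in \eqref{eq:difference-loglap-frac}, which is unavailable for $\phi_s$ (cf. Remark~\ref{sec:introduction}) but accessible here precisely because $h$ is a fixed, explicit function with a known modulus of continuity at $\partial\Omega$; note in particular that $(-\Delta)^s h-h$ is \emph{not} $s\,\loglap h+o(s)$ near $\partial\Omega$, but it nonetheless dominates a positive multiple of $s\,\loglap h$ there, which is all that is needed.

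Granting this, set $M:=m_0^{-1}\sup_{s\in(0,\frac14]}\|\phi_s\|_{L^\infty(\Omega)}$, finite by Theorem~\ref{uniform-l-infty-section}; then $Mh\ge|\phi_s|$ on $\R^N\setminus\Omega^{\delta_0}$ for every $s$, since $\phi_s\equiv0$ on $\R^N\setminus\overline\Omega$ and $h\equiv m_0$ on $\{\delta_\Omega\ge\delta_0\}$. Choose $s_1\in(0,\tfrac14]$ small enough that $\lambda_{k,s}(\Omega)<\lambda_{1,s}(\Omega^{\delta_0})$ for all $s\in(0,s_1]$: this holds because $\lambda_{k,s}(\Omega)\le1+Cs$ by Lemma~\ref{upper-est-lambda_k} while, by the same lemma applied on the Lipschitz set $\Omega^{\delta_0}$, $\liminf_{s\to0^+}\tfrac{\lambda_{1,s}(\Omega^{\delta_0})-1}{s}\ge\lambda_{1,L}(\Omega^{\delta_0})$, and $\lambda_{1,L}(\Omega^{\delta_0})\to+\infty$ as $\delta_0\to0$, so a last shrinking of $\delta_0$ ensures $\lambda_{1,L}(\Omega^{\delta_0})>C$. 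Now fix $s\in(0,s_1]$ and let $u:=\pm\phi_s-Mh$. Since $(-\Delta)^s\phi_s=\lambda_{k,s}(\Omega)\phi_s$ and $(-\Delta)^s(Mh)\ge\lambda_{k,s}(\Omega)Mh$ in $\Omega^{\delta_0}$, we have $(-\Delta)^s u\le\lambda_{k,s}(\Omega)u$ in $\Omega^{\delta_0}$ and $u\le0$ on $(\Omega^{\delta_0})^c$; testing against $u^+\in\cH^s_0(\Omega^{\delta_0})$ and using $\cE_s(u,u^+)\ge\cE_s(u^+,u^+)\ge\lambda_{1,s}(\Omega^{\delta_0})\|u^+\|_{L^2(\Omega^{\delta_0})}^2$ together with $\lambda_{k,s}(\Omega)<\lambda_{1,s}(\Omega^{\delta_0})$ forces $u^+\equiv0$, i.e. $|\phi_s|\le Mh$ in $\Omega^{\delta_0}$. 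Therefore, for every $\delta\in(0,\delta_0)$,
$$
\sup_{s\in(0,\frac14]}\|\phi_s\|_{L^\infty(\Omega^\delta)}\;\le\;\max\Bigl\{\,C\delta^{s_1},\;M\bigl(\ln\tfrac R\delta\bigr)^{-\tau}\,\Bigr\}\;\longrightarrow\;0\qquad(\delta\to0^+),
$$
where $C\delta^{s_1}$ bounds $\|\phi_s\|_{L^\infty(\Omega^\delta)}$ for $s\in[s_1,\tfrac14]$ by Lemma~\ref{boundary-decay-ros-oton-serra}, and $M(\ln\tfrac R\delta)^{-\tau}$ bounds it for $s\in(0,s_1]$ via $|\phi_s(x)|\le Mh(x)=M(\ln\tfrac R{\delta_\Omega(x)})^{-\tau}\le M(\ln\tfrac R\delta)^{-\tau}$ on $\Omega^\delta$. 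This is the assertion of the theorem.
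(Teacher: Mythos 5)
Your overall architecture (split into $s\in[s_1,\tfrac14]$ handled by Lemma~\ref{boundary-decay-ros-oton-serra} versus $s\in(0,s_1]$ handled by a barrier plus a comparison principle on a thin boundary region) matches the paper's. But your proof hinges on a step you explicitly flag as the ``key step'' and then only sketch heuristically: the uniform supersolution inequality $(-\Delta)^s h\ge \lambda_{k,s}(\Omega)\,h$ on $\Omega^{\delta_0}$ for \emph{all} $s\in(0,\tfrac14]$, where $h(x)=\bigl(\ln\tfrac{R}{\delta_\Omega(x)}\bigr)^{-\tau}$ near $\partial\Omega$. This is a genuine gap, not a routine computation. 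The function $h$ is only Dini continuous at $\partial\Omega$ (it is not $C^\alpha$ up to the boundary for any $\alpha>0$), so Lemma~\ref{local conv-log-lap} gives the convergence $\frac{(-\Delta)^s h-h}{s}\to\loglap h$ only locally uniformly \emph{away} from $\partial\Omega$; near the boundary you must control, uniformly in the two parameters $s\to0^+$ and $\delta_\Omega(x)\to0^+$, both the positive contribution from $\{h(x+z)=0\}$ (which behaves like $h(x)\,\frac{\delta_\Omega(x)^{-2s}-1}{2}$, not like $s\,h(x)\ln\frac{1}{\delta_\Omega(x)}$ once $s\ln\frac{1}{\delta_\Omega(x)}$ is not small) and the negative contributions from the region where $h(x+z)>h(x)$, whose size is governed by the local Lipschitz constant of $h$, which blows up like $\delta_\Omega(x)^{-1}$. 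Your assertion that the former ``nonetheless dominates a positive multiple of $s\loglap h$'' is exactly the uniform near-boundary bound of the type \eqref{eq:difference-loglap-frac} that the paper states it is unable to derive (Remark~\ref{sec:introduction}). Tellingly, if your key step held, your final display would give $|\phi_s(x)|\le M\bigl(\ln\tfrac{R}{\delta_\Omega(x)}\bigr)^{-\tau}$ uniformly in $s$ — precisely the decay rate that Remark~\ref{rem-main-thm-intro}(iii) records as an \emph{open conjecture}. So the step cannot be granted as written.

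The paper avoids this difficulty by a different barrier construction: it takes the radial function $V$ of Lemma~\ref{sec:regul-bound-decay-lemma-radial}, which vanishes on a ball $B_R$ and is $C^1$ off $\overline{B_R}$, centers it on a \emph{translated} exterior ball $B_R(z(\delta,x_*))$ pushed a distance $\delta$ beyond the exterior tangent ball at $x_*$, and only ever evaluates $(-\Delta)^s V$ on the annulus $A_{R+\delta,R+\delta_0}$, i.e.\ at fixed positive distance from the singular set of the barrier. There Lemma~\ref{local conv-log-lap} applies and yields $(-\Delta)^s V-V\ge sm_2$ uniformly for small $s$; the comparison is then closed by the small-volume maximum principle (Proposition~\ref{small-volume-stable}) on $\Omega\cap A_{R+\delta,R+\delta_0}(z(\delta,x_*))$, and the smallness of $V$ on $B_{R+2\delta}$ gives $|\phi_s|\le\eps$ near $x_*$. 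The price of this detour is that it yields only the qualitative uniform decay of the theorem rather than a rate. Two further, more minor points in your write-up: $\Omega^{\delta_0}$ need not be Lipschitz, so you cannot invoke Lemma~\ref{upper-est-lambda_k} on it (use Faber--Krahn and scaling as in Proposition~\ref{small-volume-stable} instead); and passing from the pointwise supersolution property of $h$ to the weak one needed to test against $u^+$ requires a justification in the spirit of Remark~\ref{maximum-principle-remark}, which again runs into the lack of $C^\alpha$ regularity of $h$ up to $\partial\Omega$.
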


The remainder of this section is devoted to the proof of this theorem. We need some preliminaries. In the following, for $s \ge 0$, we let $L^1_s(\R^N)$ denotes the space of locally integrable functions $u: \R^N \to \R$ such that
$$
\|u\|_{L^1_s}:= \int_{\R^N} \frac{|u(x)|}{(1+|x|)^{N+2s}}\,dx < +\infty.
$$
We note that $L^1_s(\R^N) \subset L^1_t(\R^N)$ for $0 \le s < t$. Next, we need the following generalization of \cite[Theorem 1.1]{RefT1}.

\begin{lemma}
  \label{local conv-log-lap}
  Let $A \subset \R^N$ be a compact set, let $U \subset \R^N$ be an open neighborhood of $A$, and let $u \in L^1_0(\R^N)$ be a function with $u \in C^\alpha_{loc}(U)$ for some $\alpha >0$. Then
  $$
  \lim_{s\to0^+}\sup_{x\in A}\Big| \frac{(-\Delta)^s u(x) - u(x)}{s}- \loglap u(x)\Big|=0.
  $$
\end{lemma}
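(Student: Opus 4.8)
The plan is to localize the computation and then reduce the pointwise statement to a combination of the global result \cite[Theorem 1.1]{RefT1} and the interior H\"older regularity quantified in Lemma~\ref{interior-basis} and Corollary~\ref{interior-3s}. First I would fix a cutoff function $\eta \in C^\infty_c(U)$ with $0 \le \eta \le 1$ and $\eta \equiv 1$ on a neighborhood $V$ of $A$ with $\overline V \subset U$, and split $u = u_1 + u_2$ where $u_1 := \eta u \in C^\alpha_c(\R^N)$ and $u_2 := (1-\eta)u$, which vanishes on $V$ and lies in $L^1_0(\R^N)$. For $x \in A$, both $(-\Delta)^s$ and $\loglap$ decompose additively along this splitting (this requires only that $u \in L^1_s(\R^N)$ for all small $s$, which holds since $L^1_0 \subset L^1_s$), so it suffices to treat the two pieces separately.

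For the compactly supported H\"older piece $u_1$, the global statement \cite[Theorem 1.1]{RefT1} already gives that $\frac{(-\Delta)^s u_1 - u_1}{s} \to \loglap u_1$ in $L^\infty(\R^N)$ as $s \to 0^+$ (indeed \cite[Theorem 1.1]{RefT1} asserts this uniform convergence for $C^\alpha_c$ functions). Hence $\sup_{x \in A}\bigl|\frac{(-\Delta)^s u_1(x) - u_1(x)}{s} - \loglap u_1(x)\bigr| \to 0$. For the piece $u_2$, which vanishes identically on $V$, all three quantities $(-\Delta)^s u_2(x)$, $u_2(x)=0$, and $\loglap u_2(x)$ become, for $x \in A$, absolutely convergent integrals of $u_2(y)$ against the kernels $C_{N,s}|x-y|^{-N-2s}$ over $\{y : |x-y| \ge \operatorname{dist}(A,\partial V)\}$, respectively the log-kernel. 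Using Lemma~\ref{C-N-s-asymptotics} (i.e. $\frac{C_{N,s}}{s} \to C_N$ with the quantitative bound \eqref{eq:C-N-D-N-ineq}) together with the elementary pointwise estimate \eqref{elementary-est-2} applied to $r = |x-y|$ — noting $|x-y|$ is bounded below on the relevant region and the tail is controlled by $u \in L^1_0(\R^N)$ — one gets
$$
\sup_{x \in A}\Bigl| \frac{(-\Delta)^s u_2(x) - u_2(x)}{s} - \loglap u_2(x)\Bigr| = O(s) \cdot \|u\|_{L^1_0(\R^N)} \to 0.
$$
Adding the two contributions yields the claim.

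The main obstacle is the bookkeeping in the $u_2$-estimate: one must carefully separate the contribution of the region $|x-y|$ of order one (where \eqref{elementary-est-2} with the $\ln^2 r$ term applies and gives an $O(s)$ bound after integrating against $|u(y)|(1+|y|)^{-N}$) from the far tail $|x-y| \to \infty$ (where the $r^4$ term in \eqref{elementary-est-2} is useless, and instead one directly compares $\frac{C_{N,s}|x-y|^{-N-2s}-0}{s}$ with $C_N |x-y|^{-N}$ using the $L^1_0$-integrability and dominated convergence, or the mean value bound $|r^{-2s}-1| \le 2s|\ln r| r^{-2s}$ for $r$ large combined with $|\ln r| r^{-2s} \lesssim 1$). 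One also has to check that the constant $\rho_N u(x)$ term in \eqref{int-log} is accounted for — but since $u_2(x) = 0$ for $x \in A$ this term contributes nothing to the $u_2$-piece, and for the $u_1$-piece it is already folded into \cite[Theorem 1.1]{RefT1}. A minor point worth stating explicitly is the justification that $(-\Delta)^s u$ is well-defined pointwise on $A$ for small $s$: this follows from $u \in C^\alpha_{loc}(U)$ (local regularity makes the principal-value integral near $x$ converge) together with $u \in L^1_0(\R^N) \subset L^1_s(\R^N)$ (which controls the tail).
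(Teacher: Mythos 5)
Your proposal is correct in substance but follows a genuinely different route from the paper. The paper does not introduce a cutoff: it works directly with $u$, splits the integral defining $\frac{(-\Delta)^s u(x)-u(x)}{s}-\loglap u(x)$ into a near part over $B_r(x)$ (handled via the H\"older bound and the kernel estimate $\bigl|\frac{C_{N,s}}{s}|y|^{-2s}-C_N\bigr|\lesssim s\,|y|^{-2s-\alpha/2}$), a far part (handled via the $L^1_0$-integrability and an $\eps$-splitting at a large radius $R$), and a constant term involving $\frac{C_{N,s}\omega_{N-1}}{2s}r^{-2s}$, i.e.\ it re-runs the proof of \cite[Theorem 1.1]{RefT1} with the H\"older hypothesis localized. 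You instead write $u=\eta u+(1-\eta)u$, quote the global $L^\infty$-convergence of \cite[Theorem 1.1]{RefT1} for the compactly supported H\"older piece, and reduce the problem to the piece vanishing near $A$, for which all three quantities are absolutely convergent integrals against kernels bounded away from the singularity. This is a legitimate and arguably more modular argument; what the paper's direct computation buys is that it is self-contained and exhibits the explicit rate on the near part, while your reduction buys brevity by outsourcing exactly the part of the estimate that is unchanged from the global case.

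Two small points. First, the displayed claim that the $u_2$-contribution is $O(s)\cdot\|u\|_{L^1_0(\R^N)}$ is too strong: on the far tail $|x-y|\to\infty$ one only has $\frac{1}{s}\bigl||x-y|^{-2s}-1\bigr|\le 2\ln|x-y|$, and $\int |u(y)|(1+|y|)^{-N}\ln(1+|y|)\,dy$ need not be finite for $u\in L^1_0(\R^N)$, so no rate in $s$ is available there; the correct conclusion is only $o(1)$, obtained exactly by the $\eps$-splitting you describe in your "main obstacle" paragraph (this is also what the paper does for its term $I_2$). Since you identify and repair this yourself, it is an overstatement rather than a gap, but the display should be weakened to $o(1)$ as $s\to 0^+$. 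Second, for the uniformity in $x\in A$ of the dominated-convergence step one should note that, after fixing $R$ with $\int_{\R^N\setminus B_R}|u(y)|(1+|y|)^{-N}dy<\eps$, the remaining integral runs over a region where $|x-y|$ is bounded above and below uniformly for $x\in A$, so the $O(s)$ bound there is uniform; this is implicit in your write-up and worth making explicit.
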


\begin{proof}
In the following, we assume $\alpha< 1$. Moreover, without loss of generality, we may assume that $u \in C^\alpha(U)$, otherwise we replace $U$ by a compact neighborhood $U' \subset U$ of $A$. Next, since $A$ is compact, we may fix $r \in (0,1)$ such that for all $x\in A$ we have $\dist(x,\R^N \setminus U)>r$. For $x\in A$ we split the expression of the logarithmic Laplacian as
\begin{align*}
L_{\Delta}u(x)= C_N\int_{B_r}\frac{u(x)-u(x+y)}{|y|^N}\ dy - C_N\int_{\R^N\setminus B_r}\frac{u(x+y)}{|y|^N}\ dy +u(x)\Big(\int_{B_1\setminus B_r}\frac{C_N}{|y|^N}\ dy +\rho_{N}\Big).
\end{align*}
With $D_{r,N}(s):= \frac{C_{N,s}\omega_{N-1}}{2s}r^{-2s}$ and since $C_N \omega_{N-1}= 2$, this splitting gives rise to the inequality
\begin{align}
\sup_{x\in A}&\Big|\frac{(-\Delta)^s-1}{s}u(x)-L_{\Delta}u(x)\Big|\le\sup_{x\in A}\int_{B_r}\frac{|u(x)-u(x+y)|}{|y|^N}\Big|\frac{C_{N,s}}{s}|y|^{-2s}-C_N\Big|\ dy \nonumber\\
&\qquad +\sup_{x\in A}\int_{\R^N\setminus B_r}\frac{|u(x+y)|}{|y|^N}\Big|\frac{C_{N,s}}{s}|y|^{-2s}-C_N\Big|\ dy
 +\|u\|_{L^{\infty}(A)}\Big|\frac{D_{r,N}(s)-1}{s}-\rho_N+2 \log r\Big|\notag\\
&\leq \|u\|_{C^{\alpha}(U)}I_1(s)+\sup_{x\in A} I_2(s,x)+\|u\|_{L^{\infty}(A)}I_3(s), \label{local conv-log-lap-eq-1}
\end{align}
where 
\begin{align*}
I_1(s)=\int_{B_r}|y|^{\alpha-N}\Big|\frac{C_{N,s}}{s}|y|^{-2s}-C_N\Big|\ dy,\quad I_2(s,x)&=\int_{\R^N\setminus B_r}\frac{|u(x+y)|}{|y|^N}\Big|\frac{C_{N,s}}{s}|y|^{-2s}-C_N\Big|,\quad\text{and}\quad\\
I_3(s)&=\Big|\frac{D_{r,N}(s)-1}{s}-\rho_N+2 \log r\Big|.
\end{align*}
By Lemma~\ref{C-N-s-asymptotics}, we have $\lim \limits_{s \to 0^+}\frac{D_{r,N}(s)-1}{s} = \rho_N - 2\log r$ and therefore
\begin{equation}
  \label{local conv-log-lap-eq-2}
\lim_{s\to0^+}I_3(s)=\lim_{s\to0^+}\Big|\frac{D_{r,N}(s)-1}{s}-\rho_N+2 \log r\Big| =0.
\end{equation}
Moreover, by \eqref{eq:C-N-D-N-ineq}, we have the inequality 
\begin{equation}
  \label{eq:RefJ3-lemma-2-1-before}
\Big|\frac{C_{N,s}}{s}|y|^{-2s}-C_N\Big| \le \Big|\frac{C_{N,s}}{s}-C_N\Big||y|^{-2s} + C_N\Big||y|^{-2s}-1\Big|\le C_N\Bigl( s\, D_N |y|^{-2s}+\Big||y|^{-2s}-1\Big|\Bigr).
\end{equation}
for $y \in \R^N \setminus \{0\}$. Using that $\bigl||y|^{-2s}-1\bigr|\le \frac{4s}{\alpha}\bigl(|y|^{-2s-\frac{\alpha}{2}}+|y|^{\frac{\alpha}{2}}\bigr)$ by \cite[Lemma 2.1]{RefJ3} it follows that
\begin{equation}
  \label{eq:RefJ3-lemma-2-1-before-1}
\Big|\frac{C_{N,s}}{s}|y|^{-2s}-C_N\Big| \le s\, C_N \Bigl(  D_N |y|^{-2s}+ \frac{4}{\alpha}\bigl(|y|^{-2s-\frac{\alpha}{2}}+|y|^{\frac{\alpha}{2}}\bigr)\Bigr)\qquad \text{for $y \in \R^N \setminus \{0\}$.}
\end{equation}
In particular, 
\begin{equation}
  \label{eq:RefJ3-lemma-2-1-before-2}
\Big|\frac{C_{N,s}}{s}|y|^{-2s}-C_N\Big| \le s\, C_N \Bigl(  D_N + \frac{8}{\alpha}\Bigr) |y|^{-2s-\frac{\alpha}{2}}\qquad \text{for $0 < |y| \le r$ }
\end{equation}
and
\begin{equation}
  \label{eq:RefJ3-lemma-2-1-before-2b}
\Big|\frac{C_{N,s}}{s}|y|^{-2s}-C_N\Big| \le  s\, C_N  r^{-2s+\alpha}\bigl( D_N + \frac{8}{\alpha}\bigr)|y|^{\frac{\alpha}{2}}\qquad \text{for $|y| >r$.} 
\end{equation}
Therefore, \eqref{eq:RefJ3-lemma-2-1-before-2} gives
\begin{align}
  \lim_{s\to0^+}I_1(s)&\leq \lim_{s\to0^+} s C_N \Bigl(  D_N + \frac{8}{\alpha}\Bigr)   \int_{B_r}|y|^{\frac{\alpha}{2} -N- 2s} dy = \lim_{s\to0^+}2 s \Bigl(  D_N + \frac{8}{\alpha}\Bigr)  \, \frac{r^{\frac{\alpha}{2}-2s} }{\frac{\alpha}{2}-2s} =0  \label{local conv-log-lap-eq-3}
\end{align}
It remains to consider $I_{2}(s,x)$ for $x\in A$. For this, let $\epsilon>0$ and note that there is $R_0>0$ such that for any $R\geq R_0$ we have
\begin{equation}\label{limit-estimate}
\int_{\R^N \setminus B_R}\frac{|u(y)|}{|y|^N}\ dy\leq \frac{\epsilon}{C_N2^N}.
\end{equation} 
Indeed, this is possible  since $u\in L^1_0$ and thus $\lim\limits_{R\to0}\int\limits_{\R^N \setminus B_R}\frac{|u(y)|}{|y|^N}\ dy=0$. In the following, we fix $R>\max\{R,R_0\}$ such that $B_1(A)\subset B_{R}$. Note that by this choice we have in particular $\sup\limits_{z\in A}|z| \le \frac{R}{2}$. Using \eqref{eq:RefJ3-lemma-2-1-before-2b} we then split for $x\in A$
\begin{align}
  &I_2(s,x)=\int_{\R^N\setminus B_r(x)}\frac{|u(y)|}{|x-y|^N}\Big|\frac{C_{N,s}}{s}|x-y|^{-2s}-C_N\Big|\ dy\nonumber\\
	&\leq sC_Nr^{-2s-\alpha}(D_N+\frac{8}{\alpha})\int_{B_R\setminus B_r(x)}\frac{|u(y)|}{|x-y|^{N-\frac{\alpha}{2}}}\ dy+C_N\int_{\R^N\setminus B_R}\frac{|u(y)|}{|x-y|^{N}}\Big|\frac{C_{N,s}}{sC_N}|x-y|^{-2s}-1\Big|\ dy.  \label{local conv-log-lap-eq-4}
\end{align}    
 To estimate the first integral in this decomposition, we use the fact that $|x-y| \ge r \ge \frac{r}{R+1}(1+|y|)$ for $y \in B_R \setminus B_r(x)$ and therefore
\begin{align}
\int_{B_R\setminus B_r}\frac{|u(y)|}{|x-y|^{N-\frac{\alpha}{2}}}\ dy& \le  \Bigl(\frac{r}{R+1}\Bigr)^{\frac{\alpha}{2}-N} \int_{B_R \setminus B_r(x)}|u(y)| (1+|y|)^{\frac{\alpha}{2}-N}dy \nonumber\\
  &\le   \Bigl(\frac{r}{R+1}\Bigr)^{\frac{\alpha}{2}-N}  (1+R)^{\frac{\alpha}{2}}\|u\|_{L^1_0}\leq (1+R)^Nr^{\frac{\alpha}{2}-N}\|u\|_{L^1_0}.\label{local conv-log-lap-eq-5}
\end{align}
For the second integral in this decomposition, first note that since $|x-y|\geq \max\{1,\frac{|y|}{2}\}$ for $y\in \R^N\setminus B_R$ by \eqref{eq:key-est} we have for $y\in \R^N\setminus B_R$
\[
\Big|\frac{C_{N,s}}{sC_N}|x-y|^{-2s}-1\Big|\leq 1-4^s|y|^{-2s}(1+s\rho_N+o(s))\leq 1 +O(s)\quad\text{for $s\to 0^+$ (uniform in $x$ and $y$).}
\]
Combining this with \eqref{local conv-log-lap-eq-5} in \eqref{local conv-log-lap-eq-4} we find
\begin{align}
 \lim_{s\to0^+}\sup_{x\in A}I_2(s,x)&\leq C_N\sup_{x\in A}\int_{\R^N\setminus B_R}\frac{|u(y)|}{|x-y|^{N}} \ dy\leq C_N2^N\int_{\R^N\setminus B_R}\frac{|u(y)|}{|y|^{N}} \ dy \leq \epsilon.\label{local conv-log-lap-eq-6}
\end{align}

Combining \eqref{local conv-log-lap-eq-2}, \eqref{local conv-log-lap-eq-3}, and \eqref{local conv-log-lap-eq-6}, we get from \eqref{local conv-log-lap-eq-1} 
\begin{align*}
\lim_{s \to 0^+}\sup_{x\in A} \Big|\frac{(-\Delta)^su(x)-u(x)}{s}-L_{\Delta}u(x)\Big| &\le \epsilon.
\end{align*}
    Here, $\epsilon>0$ is chosen arbitrary and this completes the proof of the lemma.
\end{proof}

Next we state a uniform small volume maximum principle. For this we define, for $s \in (0,1)$ and any open set $U \subset \R^N$, the function space
$$
\cV^s(U):= \{u \in L^2_{loc}(\R^N)\::\: \int_{U}\int_{\R^N} \frac{(u(x)-u(y))^2}{|x-y|^{N+2s}}\,dxdy < \infty\}
$$
It is easy to see that the quadratic form 
$$
\cE_s(u,v) = C_{N,s} \int_{\R^N} \int_{\R^N} \frac{(u(x)-u(y))(v(x)-v(y))}{|x-y|^{N+2s}}\,dxdy
$$
is well-defined in Lebesgue sense for $u \in \cV^s(U)$, $v \in \cH^s_0(U)$, see e.g. \cite{RefJ2} and the references therein. If functions $u \in \cV^s(U)$ and $g \in L^2(U)$ are given, we say that $(-\Delta)^s u \ge g$ {\em in $U$ weak sense} if
$$
\cE_s(u,v)-\int_{U}g v\,dx \:\ge\: 0 \qquad \text{for all $v \in \cH^s_0(U)$, $v \ge 0$.}
$$

\begin{remark}
\label{maximum-principle-remark}
  Let $U \subset \R^N$ be an open bounded set. Moreover, let $g \in L^2(U)$, and let $u \in L^1_s(\R^N) \cap L^2_{loc}(\R^N)$ be a function satisfying $u \in C^\alpha(K)$ for a compact neighborhood $K$ of $\overline U$ and, for some $s \in (0,\frac{\alpha}{2})$,
  $$
  (-\Delta)^s u \ge g \qquad \text{in $U$ in pointwise sense.}
  $$
  Then $u \in \cV^s(U)$, and $u$ satisfies $  (-\Delta)^s u \ge g$ also in weak sense. This follows since, under the stated assumptions, we have 
 \begin{equation*}
\int_{U} [(-\Delta)^su]  v\,dx = \cE_s(u,v) \qquad \text{for all $v \in \cH^s_0(U)$.}
\end{equation*}
The latter property follows  easily by Fubini's theorem.
\end{remark}

Our uniform small volume weak maximum principle now reads as follows. 
\begin{prop}
\label{small-volume-stable}  
There exists $\mu_0= \mu_0(N)>0$ with the property that the operators
$$
(-\Delta)^s -\id,\qquad s \in (0,1)
$$
satisfy the following weak maximum principle on every open set $U \subset \R^N$ with $|U| \le \mu_0$:\\[0.2cm]
For every $s \in (0,1)$ and every function $u \in \cV^s(U)$ satisfying
$$
(-\Delta)^s u \ge u \quad \text{in $U$},\qquad u \ge 0 \quad \text{in $\R^N \setminus U$}
$$
we have $u \ge 0$ on $\R^N$.
\end{prop}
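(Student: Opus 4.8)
The plan is to prove the statement by a standard argument combining the Sobolev/fractional Sobolev embedding with the spectral gap $\lambda_{1,s}(U) \ge c|U|^{-2s/N}$ that holds uniformly in $s$. First I would test the weak inequality $(-\Delta)^s u \ge u$ in $U$ against the admissible test function $v = u^- = -\min\{u,0\} \in \cH^s_0(U)$, which is legitimate since $u \ge 0$ on $\R^N \setminus U$ forces $u^-$ to be supported in $U$. Using the elementary pointwise inequality $(u(x)-u(y))(u^-(x)-u^-(y)) \le -(u^-(x)-u^-(y))^2$ (the same algebraic identity already used in the proof of Theorem~\ref{uniform-l-infty-section}), testing yields
$$
-\cE_s(u^-,u^-) \ge \cE_s(u,u^-) \ge \int_U u\, u^-\,dx = -\int_U (u^-)^2\,dx = -\|u^-\|_{L^2(\R^N)}^2,
$$
so that $\cE_s(u^-,u^-) \le \|u^-\|_{L^2(\R^N)}^2$.

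**The key estimate.** The heart of the matter is then to show that for $|U|$ small enough (uniformly in $s \in (0,1)$) the reverse strict inequality $\cE_s(u^-,u^-) > \|u^-\|_{L^2(\R^N)}^2$ holds whenever $u^- \not\equiv 0$, forcing $u^- \equiv 0$. This is where the smallness of $|U|$ enters: by the Sobolev-type inequality for $(-\Delta)^s$ one has $\|w\|_{L^{2^*_s}(\R^N)}^2 \le S_N \cE_s(w,w)$ for $w \in \cH^s_0(\R^N)$ with $2^*_s = \frac{2N}{N-2s}$, but the constant $S_N$ degenerates as $s \to 0$. The robust substitute is to use instead the Poincaré-type inequality on sets of small measure. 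Concretely, by Hölder's inequality with the embedding $\cH^s_0(U) \hookrightarrow L^{2^*_s}(U)$, one gets
$$
\|w\|_{L^2(U)}^2 \le |U|^{2s/N} \|w\|_{L^{2^*_s}(U)}^2 \le |U|^{2s/N} S_{N,s} \cE_s(w,w),
$$
and the quantity $|U|^{2s/N} S_{N,s}$ can be made $< 1$ for $|U| \le \mu_0$ uniformly in $s$ — this is because $S_{N,s} \le C_N^{-1}\, s^{-1}\cdot(\text{bounded factor})$ by the explicit form of $C_{N,s}$ from Lemma~\ref{C-N-s-asymptotics}, while $|U|^{2s/N} = e^{(2s/N)\ln|U|}$, so for $\ln|U|$ sufficiently negative the product is controlled. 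I would carry out this bookkeeping carefully using \eqref{eq:key-est}, splitting into the regimes $s$ close to $0$ and $s$ bounded away from $0$ as in the earlier proofs, and deduce the existence of $\mu_0 = \mu_0(N)$ so that $\cE_s(w,w) \ge \frac{2}{1+|U|^{2s/N}S_{N,s}}\|w\|_{L^2}^2 > \|w\|_{L^2}^2$ for $|U|\le\mu_0$.

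**Conclusion and main obstacle.** Combining the two inequalities: if $u^- \not\equiv 0$ then $\|u^-\|_{L^2}^2 \ge \cE_s(u^-,u^-) > \|u^-\|_{L^2}^2$, a contradiction; hence $u^- \equiv 0$, i.e. $u \ge 0$ on $\R^N$. The main obstacle I anticipate is obtaining the uniform-in-$s$ control of the Sobolev constant $S_{N,s}$ as $s \to 0^+$ and verifying that $|U|^{2s/N}S_{N,s} < 1$ can be achieved by a single $\mu_0$ valid for all $s \in (0,1)$; the degeneracy of the fractional Sobolev inequality at $s=0$ is precisely the phenomenon that this paper repeatedly has to circumvent, and here it is tamed by the favorable interplay between the factor $|U|^{2s/N}\to 1$ and the growth $S_{N,s} \sim s^{-1}$. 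A minor secondary point is to confirm that $u^- \in \cH^s_0(U)$ is indeed an admissible test function, which follows from $u \in \cV^s(U)$ together with $u \ge 0$ outside $U$ by the standard truncation lemma (e.g. \cite[Lemma 3.2]{RefJ2}).
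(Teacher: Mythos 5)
Your reduction step is fine: testing the weak inequality with $v=u^-\in\cH^s_0(U)$ and using $(u(x)-u(y))(u^-(x)-u^-(y))\le -(u^-(x)-u^-(y))^2$ correctly yields $\cE_s(u^-,u^-)\le \|u^-\|_{L^2}^2$, so everything hinges on showing $\lambda_{1,s}(U)>1$ uniformly in $s\in(0,1)$ for $|U|\le\mu_0$. (The paper gets this same reduction by citing \cite[Prop.~2.3]{RefJ2}.) The genuine gap is in your key estimate. With respect to the normalized form $\cE_s(w,w)=\int|\xi|^{2s}|\hat w|^2d\xi$, the sharp Sobolev constant $S_{N,s}$ does \emph{not} grow like $s^{-1}$; it converges to $1$ as $s\to0^+$ (the $s^{-1}$ from $C_{N,s}^{-1}$ is exactly cancelled by the factor $s$ in the sharp constant for the unnormalized Gagliardo seminorm). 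More importantly, if one only knows the crude bound $S_{N,s}=O(s^{-1})$ that you invoke, the interplay you call ``favorable'' goes the wrong way: $|U|^{2s/N}=e^{(2s/N)\ln|U|}\to 1$ while $O(s^{-1})\to\infty$, so $|U|^{2s/N}S_{N,s}\to\infty$ as $s\to0^+$ for \emph{every} fixed $U$, and the requirement $|U|^{2s/N}S_{N,s}<1$ would force $\ln|U|<\tfrac{N}{2s}\ln(cs)\to-\infty$, i.e.\ no single $\mu_0$ works. To close the argument along your lines you would need the first-order expansion $S_{N,s}=1+\sigma_N s+o(s)$ of the \emph{sharp} constant, whence $|U|^{2s/N}S_{N,s}=\exp\bigl(s(\tfrac{2}{N}\ln|U|+\sigma_N)+o(s)\bigr)<1$ for $|U|$ small --- but that expansion is precisely an eigenvalue-type asymptotic of the kind the paper establishes by other means, and you neither state nor prove it. A secondary but real defect: for $N=1$ and $s\ge\frac12$ the exponent $2^*_s=\frac{2N}{N-2s}$ does not exist, and for $N=2$, $s\to1^-$ the constant $S_{N,s}$ blows up, so the Sobolev route also fails in the regime ``$s$ bounded away from $0$'' that you propose to treat by the same estimate.

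For comparison, the paper avoids the Sobolev inequality entirely: after the reduction to $\lambda_{1,s}(U)>1$, it uses the Faber--Krahn type inequality of \cite{BLMH} to pass to the ball $B_r$ with $|B_r|=|U|$, then proves $\lambda_{1,s}(B_r)>1$ for small $s$ from the convergence $\frac{\lambda_{1,s}(B_{r_0})-1}{s}\to\lambda_{1,L}(B_{r_0})>0$ (with $r_0=2e^{\frac12(\psi(N/2)-\gamma)}$) together with scaling, and for $s$ bounded away from $0$ from the explicit lower bound of Ba\~nuelos--Kulczycki \cite{BK}. If you want to salvage your approach, you would have to replace your Sobolev step by one of these two ingredients (or by the sharp constant's expansion), and supply a separate argument for $s$ bounded away from zero in low dimensions.
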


\begin{proof}
By \cite[Prop. 2.3]{RefJ2}, it suffices to find $\mu_0>0$ with the property that
  \begin{equation}
    \label{eq:sufficient-property}
\text{$\lambda_{1,s}(U)>1$ for every open set $U \subset \R^N$ with $|U|\le \mu_0$ and every $s \in (0,1),$}   
  \end{equation}
 where $\lambda_{1,s}(U)$ denotes the first Dirichlet eigenvalue of $(-\Delta)^s$ on $U$.

Let $r_0=r_0(N):= 2 e^{\frac{1}{2}( \psi(\frac{N}{2})-\gamma)}$. It then follows from \cite[Section 4]{RefT1} that $\lambda_{1,L}(B_{r_0})>0$
, where $\lambda_{1,L}(B_{r_0})$ denotes the first Dirichlet eigenvalue of $\loglap$ on $B_{r_0}:= B_{r_0}(0)$.

Since
$$
\frac{\lambda_{1,s}(B_{r_0})-1}{s} \to \lambda_{1,L}(B_{r_0}) \qquad \text{as $s \to 0^+$,}
$$
there exists $s_0 \in (0,1)$ with the property that
$$
\lambda_{1,s}(B_{r_0})>1 \qquad \text{for $s \in (0,s_0)$.}   
$$
By the scaling properties of the fractional Laplacian, this also implies that
\begin{equation}
  \label{eq:r-0-s-pos-eigenvalue}
\lambda_{1,s}(B_r)= \Bigl(\frac{r_0}{r}\Bigr)^{2s}\lambda_{1,s}(B_{r_0}) \ge \lambda_{1,s}(B_{r_0})>1 \qquad \text{for $s \in (0,s_0)$, $r \in (0,r_0]$.}   
\end{equation}
To obtain a similar estimate for $s \in [s_0,1)$, we use a lower eigenvalue bound given by Ba$\rm{\tilde{n}}$uelos and Kulczycki. In \cite[Corollary 2.2]{BK}, they proved that 
$$
\lambda_{1,s}(B_1) \ge 2^{2s} \frac{\Gamma(1+s) \Gamma(\frac{N}{2}+s)}{\Gamma(\frac{N}{2})}\qquad \text{for $s \in (0,1)$.}
$$
From this we deduce that 
\begin{equation}
  \label{eq:r-1-s-pos-eigenvalue}
\lambda_{1,s}(B_r) \ge \Bigl(\frac{2}{r}\Bigr)^{2s} \frac{\Gamma(1+s) \Gamma(\frac{N}{2}+s)}{\Gamma(\frac{N}{2})} \ge \Bigl(\frac{2}{r}\Bigr)^{2 s_0} \frac{\Gamma_{min}}{\Gamma(\frac{N}{2})} > 1\quad \text{for $s \in [s_0,1)$ and $0< r \le r_1 $,}
\end{equation}
 where $r_1:=  2 \Bigl(\frac{\Gamma_{min}}{\Gamma(\frac{N}{2})}\Bigr)^{\frac{1}{2s_0}}$ and $\Gamma_{min}>0$ denotes the minimum of the Gamma function on $(0,\infty)$. Setting $r_*:= \min \{r_0,r_1\}$, we thus find, by combining \eqref{eq:r-0-s-pos-eigenvalue} and \eqref{eq:r-1-s-pos-eigenvalue}, that 
\begin{equation}
  \label{eq:combined-s-pos-eigenvalue}
\lambda_{1,s}(B_r)>1 \qquad \text{for $s \in (0,1)$, $r \in (0,r_*]$.}   
\end{equation}
Next, let $\mu_0:= |B_{r_*}|$, and let $U \subset \R^N$ be a nonempty open set with $|U| \le \mu_0$. Moreover, let $r \in (0,r_*]$ with $|B_r|= |U|$. Combining \eqref{eq:combined-s-pos-eigenvalue} and the Faber-Krahn type principle given in \cite[Theorem 5]{BLMH}, we deduce that 
$$
\lambda_{1,s}(U) \ge \lambda_{1,s}(B_r) >1  \qquad \text{for $s \in (0,1)$,}
$$
as required.
\end{proof}

We recall a result from \cite{RefT1} regarding a
radial barrier type function for the logarithmic Laplacian, see \cite[Lemma 5.3, Case $\tau = \frac{1}{4}$]{RefT1}.

\begin{lemma}
\label{sec:regul-bound-decay-lemma-radial}
Let $R \in (0,\frac{1}{2})$. Then there exists $\delta_0 = \delta_0(R)>0$ and a continuous function $V \in L^1_0(\R^N)$ with the following properties:
\begin{itemize}
\item[(i)] $V \equiv 0$ in $B_R$ and $V>0$ in $\R^N \setminus \overline{B_R}$;
\item[(ii)] $V \in C^1_{loc}(\R^N \setminus \overline{B_R})$;
\item[(iii)] $\loglap V(x) \to \infty$ as $|x| \to R$, $|x|>R$.
\end{itemize}
\end{lemma}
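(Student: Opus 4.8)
The plan is to produce an explicit \emph{radial} barrier $V$ whose profile near $\partial B_R$ is a small negative power of a logarithm, and then to bound $\loglap V$ near $\partial B_R$ from below by isolating the contribution of the ``cliff'' at $\partial B_R$ coming from the set $B_R$ on which $V$ vanishes. Concretely, I would fix $\tau=\tfrac14$ and constants $\rho_0\in(0,1)$, $C_0>0$ chosen so that $2R+2\rho_0<1$ and the function $g(t)=\bigl(\ln\tfrac{C_0}{t}\bigr)^{-\tau}$ is, on $(0,\rho_0]$, smooth, strictly increasing, concave, with $g(0^+)=0$ (this is possible since $g'(t)=\tfrac{\tau}{t}(\ln\tfrac{C_0}{t})^{-\tau-1}>0$ and $g''<0$ once $\ln\tfrac{C_0}{t}>\tau+1$). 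Then I set $V$ to be the radial function with $V\equiv0$ on $B_R$, $V(x)=g(|x|-R)$ for $R<|x|\le R+\rho_0$, and, for $|x|>R+\rho_0$, a fixed nonnegative smooth radial function that is positive on an annulus, compactly supported, and matches $g$ to first order at $|x|=R+\rho_0$. This $V$ is continuous on $\R^N$, bounded with compact support (so $V\in L^1_0(\R^N)$), vanishes exactly on $\overline{B_R}$ (giving (i)), and is $C^\infty$, hence $C^1_{loc}$, on $\R^N\setminus\overline{B_R}$ (giving (ii)).

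For (iii) I would fix $x$ with $t:=|x|-R\in(0,\rho_0/2)$ and, using $V\equiv0$ on $B_R\subset B_1(x)$ together with the boundedness and compact support of $V$, write
$$
\loglap V(x)=C_N\int_{B_1(x)}\frac{V(x)-V(y)}{|x-y|^N}\,dy \;-\; C_N\!\!\int_{\R^N\setminus B_1(x)}\!\!\frac{V(y)}{|x-y|^N}\,dy\;+\;\rho_N V(x),
$$
where the last two terms stay bounded as $t\to0^+$. Splitting $B_1(x)=B_R\cup(B_1(x)\setminus B_R)$, the first integral becomes $T_1(x)+T_2(x)$ with
$$
T_1(x)=V(x)\int_{B_R}\frac{dy}{|x-y|^N},\qquad T_2(x)=\int_{B_1(x)\setminus B_R}\frac{V(x)-V(y)}{|x-y|^N}\,dy.
$$
Since $\partial B_R$ is a sphere and $\dist(x,B_R)=t$, approximating $B_R$ near its point closest to $x$ by a half-space truncated at scale $R$ gives $\int_{B_R}|x-y|^{-N}\,dy=\tfrac{\omega_{N-1}}{2}\ln\tfrac1t+O(1)$, hence $T_1(x)=\tfrac{\omega_{N-1}}{2}\,V(x)\ln\tfrac1t+O\bigl(V(x)\bigr)$, which diverges at the right rate.

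It then remains to bound $T_2(x)$ from below. I would fix a small $\rho\in(0,\rho_0/4)$ independent of $t$; on $(B_1(x)\setminus B_R)\setminus B_\rho(x)$ the kernel is bounded by $\rho^{-N}$, so that part of $T_2$ is $O(1)$. On $B_\rho(x)\setminus B_R$ one has $V(y)=g(|y|-R)$, $V(x)=g(t)$, and $\bigl||y|-R-t\bigr|\le|x-y|$; using the monotonicity and concavity of $g$ (so $|g(t)-g(\sigma)|\le g'(\min\{t,\sigma\})\,|t-\sigma|$), the explicit form of $g'$, and $\int_t^\rho \tfrac{g(s)}{s}\,ds=\tfrac{1}{1-\tau}(\ln\tfrac1t)^{1-\tau}+o\bigl((\ln\tfrac1t)^{1-\tau}\bigr)$, an integration in polar coordinates about $x$ (again replacing $B_R$ locally by a half-space) should yield
$$
T_2(x)\;\ge\;-\Bigl(\frac{\omega_{N-1}}{2}\cdot\frac{\tau}{1-\tau}+o(1)\Bigr)\,V(x)\ln\frac1t\qquad(t\to0^+).
$$
Combining this with the estimate for $T_1$ gives $\loglap V(x)\ge\bigl(\tfrac{\omega_{N-1}}{2}\cdot\tfrac{1-2\tau}{1-\tau}+o(1)\bigr)V(x)\ln\tfrac1t+O(1)$, and since $\tau=\tfrac14<\tfrac12$ and $V(x)\ln\tfrac1t=(\ln\tfrac1t)^{1-\tau}\to\infty$, property (iii) follows. (This is exactly \cite[Lemma 5.3, Case $\tau=\tfrac14$]{RefT1}, so one may alternatively just quote that result.)

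The main obstacle is the lower bound for the oscillation term $T_2(x)$: one must treat carefully the near-diagonal region $|x-y|\lesssim t$, where $g$ and $g'$ are evaluated close to $0$, and keep precise track of the solid-angle factors, since these decide whether the positive ``cliff'' term $T_1$ beats the negative part of $T_2$. The strict inequality $\tau<\tfrac12$ is precisely the margin that makes the resulting leading coefficient $\tfrac{\omega_{N-1}}{2}\cdot\tfrac{1-2\tau}{1-\tau}$ positive, which is why the value $\tau=\tfrac14$ — comfortably below the borderline $\tfrac12$ — is chosen here.
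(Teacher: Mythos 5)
The paper itself offers no proof of this lemma: it simply quotes \cite[Lemma 5.3, Case $\tau=\frac14$]{RefT1} and adds the one\--line remark that the construction given there (which is exactly the profile $g(t)=(\ln\frac{C_0}{t})^{-\tau}$ you describe) automatically yields the $C^1_{loc}$ regularity in (ii). So your sketch is a reconstruction of the cited proof rather than an alternative to anything in this paper, and its skeleton — the cliff term $T_1$ contributing $+\frac{\omega_{N-1}}{2}V(x)\ln\frac1t$, the oscillation term $T_2$ costing at most $\frac{\omega_{N-1}}{2}\cdot\frac{\tau}{1-\tau}V(x)\ln\frac1t$, net coefficient $\frac{1-2\tau}{1-\tau}>0$ for $\tau=\frac14$ — is the right one with the right constants.

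Two points in your outline need repair. First, a nonnegative, compactly supported $V$ vanishes on a neighborhood of infinity and therefore cannot satisfy (i), which requires $V>0$ on all of $\R^N\setminus\overline{B_R}$; this global positivity is actually used later, in the proof of Theorem~\ref{thm-unif-boundary-decay-section} at \eqref{eq:V-x-star-lower-bound}. The fix is trivial (let $V$ decay like $|x|^{-1}$, say, which keeps it in $L^1_0(\R^N)$ and does not disturb the boundary computation), but as written (i) fails. Second, and more substantively, the tool you name for bounding $T_2$ from below — the concavity inequality $|g(t)-g(\sigma)|\le g'(\min\{t,\sigma\})|t-\sigma|$ — is useless in the region that matters: for $\sigma=|y|-R$ of order $r\gg t$ it gives $g(\sigma)-g(t)\le g'(t)r$, and integrating $dr/r$ up to $\rho$ produces $g'(t)\rho\sim\frac{\tau\rho}{t}(\ln\frac1t)^{-\tau-1}$, which blows up polynomially. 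The cruder bound $g(\sigma)-g(t)\le g(2r)$ together with $\int_t^\rho\frac{g(s)}{s}\,ds\sim\frac{1}{1-\tau}(\ln\frac1t)^{1-\tau}$ is also not enough: it yields the coefficient $\frac{1}{1-\tau}>1$, which would make $T_1+T_2$ tend to $-\infty$. The coefficient $\frac{\tau}{1-\tau}$ you assert requires keeping the subtracted $g(t)$: averaging over the half\--sphere gives, for $r\gg t$, roughly $\frac{\omega_{N-1}}{2}\bigl(g(r)-g(t)\bigr)$, and it is the term $-\frac{\omega_{N-1}}{2}g(t)\int_t^\rho\frac{dr}{r}\approx-\frac{\omega_{N-1}}{2}(\ln\frac1t)^{1-\tau}$ that cancels the ``$1$'' in $\frac{1}{1-\tau}=1+\frac{\tau}{1-\tau}$. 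This cancellation is the crux of the whole barrier estimate and is absent from your outline, even though your final displayed constant presupposes it (the near\--diagonal region $r<t$ is indeed harmless by concavity, as you note). With these two repairs the argument goes through; alternatively, as both you and the paper observe, one can simply invoke \cite[Lemma 5.3]{RefT1}.
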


In fact, in \cite[Lemma 5.3]{RefT1} it was only stated that $V$ is locally uniformly Dini continuous on $\R^N \setminus \overline{B_R}$ since this was sufficent for the considerations in this paper. However, the construction in the proof of this lemma immediately yields that $V \in C^1_{loc}(\R^N \setminus \overline{B_R})$.

\begin{proof}[Proof of Theorem~\ref{thm-unif-boundary-decay-section} (completed)]

We need some more notation. For $R>0$ and $R_1>R$, we consider the open annulus
$$
A_{R,R_1}:= \{x \in \R^N \:: R<|x| < R_1\} \:\subset\: \R^N
$$
and its translations
$$
A_{R,R_1}(y):= \{x \in \R^N \:: R<|x-y| < R_1\}, \qquad y \in \R^N.
$$
In the following, we let $\partial^i \Omega \subset \partial \Omega$ denote the subset of boundary points $x_{*} \in \partial \Omega$ for which there exists an (inner) open ball $B_{x_*} \subset \Omega$ with $x_* \in \partial B_{x_*}$.
  
  Since $\Omega$ satisfies a uniform exterior sphere condition, there exists a radius $0<R_0< \frac{1}{2}$ such that for every point $x_* \in \partial^i \Omega$ there exists a (unique) ball $B^{x_*}$ of radius $R_0$ contained in $\R^N \setminus \overline{\Omega}$ and tangent to $\partial B_{x_*}$ at $x_*$. Let $c(x_*)$ denote the center of $B^{x_*}$.\\

Applying Lemma~\ref{sec:regul-bound-decay-lemma-radial} with the value $R: =\frac{R_0}{2}$ now yields a function $V \in L^1_0(\R^N)$ such that the properties (i)-(iii) of Lemma~\ref{sec:regul-bound-decay-lemma-radial} are satisfied.\\

We now choose $\delta_0 \in (0,1)$ sufficiently small such that 
$$
|A_{R,R+\delta_0}|<\mu_0, 
$$
where $\mu_0>0$ is given by Proposition~\ref{small-volume-stable}.\\

Next we consider the finite values
$$
m_1:= \sup_{s \in (0,\frac{1}{4}]}\|\phi_s\|_{L^\infty(\Omega)}\qquad \text{and}\qquad
m_2 := \sup_{s \in (0,\frac{1}{4}]} \Bigl\|\frac{\lambda_s-1}{s} \phi_s \Bigr\|_{L^\infty(\Omega)}
$$
By Lemma~\ref{sec:regul-bound-decay-lemma-radial}(iii), we can make $\delta_0>0$ smaller if necessary to guarantee that 
\begin{equation}
  \label{eq:loglap-2-m-2-bound}
\loglap V(x) \ge 2m_2 \qquad \text{in $A_{R,R+\delta_0}$.}
\end{equation}
Next, for $x_* \in \partial^i \Omega$ and $t \in [0,1]$, we consider the point
$$
z(t,x_*):= x_* + (t+R)\frac{c(x_*)-x_*}{|c(x_*)-x_*|} \qquad \text{in $\R^N \setminus \overline \Omega$.}
$$
which lies on the extension of the line segment spanned by the points $x_*$ and $c(x_*)$ beyond $c(x_*)$. By construction, $\overline{B_R(z(t,x_*))}\cap \overline \Omega = \varnothing$ for $t \in (0,1]$, while, for $t \in (0,\delta_0)$, the intersection
$$
\Omega_{t,x_*} := \Omega \cap A_{R,R+\delta_0}(z(t,x_*))= \Omega \cap A_{R+t,R+\delta_0}(z(t,x_*))
$$
is nonempty. Since $\Omega$ is bounded, there exists $R_1>R$ such that
$$
\Omega \subset A_{R,R_1}(z(t,x_*)) \qquad \text{for all $x_* \in \partial^i \Omega$, $t \in (0,\delta_0)$},
$$
which implies that
\begin{equation}
  \label{eq:inclusion-omega-t-x-star}
\Omega \setminus \Omega_{t,x_*} \subset \overline{A_{R+\delta_0,R_1}(z(t,x_*))} \qquad \text{for all $x_* \in \partial^i \Omega$, $t \in (0,\delta_0)$.}
\end{equation}
Next, we define the translated functions
$$
V_{t,x_*} \in L^1_0(\R^N), \qquad V_{t,x_*}(x)= V(x-z(t,x^*)), \qquad \text{$x_* \in \partial \Omega$, $t \in [0,1]$.}
$$
Since $V$ is positive on the compact set $\overline{A_{R+\delta_0,R_1}}$ by Lemma~\ref{sec:regul-bound-decay-lemma-radial}(i), we may choose $c>1$ sufficiently large such that $V \ge \frac{m_1}{c}$ in $\overline{A_{R+\delta_0,R_1}}$
and thus, by \eqref{eq:inclusion-omega-t-x-star}, also 
\begin{equation}
  \label{eq:V-x-star-lower-bound}
V_{t,x_*} \ge \frac{m_1}{c}
\qquad \text{in $\Omega \setminus \Omega_{t,x_*}$ for all
  $x_* \in \partial^i \Omega$, $t \in (0,\delta_0)$.}
\end{equation}
To finish the proof of the theorem, we now let $\eps>0$ be given. Since $V$ is continous and $V \equiv 0$ on $B_R$ by Lemma~\ref{sec:regul-bound-decay-lemma-radial}(i), we may fix $\delta \in (0,\frac{\delta_0}{2})$ such that
\begin{equation}
  \label{eq:V-continuity-upper-bound}
0 \le V \le \frac{\eps}{c} \qquad \text{in $B_{R+2\delta}$.}
\end{equation}
Since $A_{R+\delta,R+\delta_0} \subset \subset \R^N \setminus \overline{B_R}$, we find, as a consequence of Lemma~\ref{local conv-log-lap} and Lemma~\ref{sec:regul-bound-decay-lemma-radial}, that 
$$
\frac{(-\Delta)^s V - V}{s} \to \loglap V \qquad \text{uniformly on $A_{R+\delta,R+\delta_0}\;$ as $\;s \to 0^+$.}
$$
Hence, by \eqref{eq:loglap-2-m-2-bound}, we may fix $s_1 \in (0,\frac{1}{4}]$ with the property that
\begin{equation}
  \label{eq:-Delta-s-V-lower-bound}
\frac{(-\Delta)^s V - V}{s} \ge m_2 \ge \frac{m_2}{c} \qquad \text{on $A_{R+\delta,R+\delta_0}\;$ for $\;s \in (0,s_1)$.}
\end{equation}
We now claim that 
\begin{equation}
  \label{eq:final-claim-boundary-decay}
|\phi_s(x)| \le \eps \qquad \text{for $s \in (0,s_1)$, $\;x \in \Omega^{\delta}$.}  \end{equation}
To show \eqref{eq:final-claim-boundary-decay}, we let $x \in \Omega^{\delta}$, and we let $x_* \in \partial \Omega$ with $\delta_\Omega(x)= |x-x_*|$. By definition, we then have $x_* \in \partial^i \Omega$. Moreover, by contruction we have 
\begin{equation}
  \label{eq:x-x-star-inclusion}
x \in \Omega \cap A_{R+\delta,R+2\delta}(z(\delta,x_*)) \subset B_{R+2\delta}(z(\delta,x_*)).
\end{equation}
We now define $W:= c V_{\delta,x_*} \in L^1_0(\R^N)$. By \eqref{eq:-Delta-s-V-lower-bound}, we then have that
\begin{equation}
  \label{eq:-Delta-s-W-lower-bound}
(-\Delta)^s W   \ge W + s m_2 \qquad \text{in $A_{R+\delta,R+\delta_0}(z(\delta,x^*))$ for $s \in (0,s_1)$.}
\end{equation}
Consequently, in weak sense,
\begin{align}
  (-\Delta)^s \bigl(W \pm \phi_s\bigr) &=(-\Delta)^s W \pm \lambda_{s} \phi_s \ge\bigl(W \pm \phi_s\bigr) + s \bigl(m_2 \pm \frac{\lambda_s-1}{s}\phi_s\bigr) \nonumber\\
                                       &\ge W \pm \phi_s \qquad \text{in $\Omega_{\delta,x_*}=\Omega \cap A_{R+\delta,R+\delta_0}(z(\delta,x^*))$}\label{weak-max-princ-appl-1}  
\end{align}
by the definition of $m_2$. Moreover, it follows from \eqref{eq:V-x-star-lower-bound} and the definition of $m_1$ that  
\begin{equation}
  \label{eq:weak-max-princ-appl-2}
W \pm \phi_s \ge  0 \qquad \text{in $\R^N \setminus
\Omega_{\delta,x_*}$ for $s \in (0,s_1)$.}
\end{equation}
Using Proposition~\ref{small-volume-stable}, \eqref{weak-max-princ-appl-1}, and \eqref{eq:weak-max-princ-appl-2} together with the fact that $|\Omega_{\delta,x_*}| \le |A_{R,R+\delta_0}| \le \mu_0$, we deduce that 
$$
W \pm \phi_s \ge 0 \qquad \text{in $\R^N$,}
$$
and thus, in particular,
$$
|\phi_s| \le W \le \eps \qquad \text{in $B_{R+2\delta}(z(\delta,x_*))$ for $s \in (0,s_1)$}
$$
by \eqref{eq:V-continuity-upper-bound}. Consequently, $|\phi_s(x)| \le \eps$ for $s \in (0,s_1)$ by \eqref{eq:x-x-star-inclusion}, and this yields \eqref{eq:final-claim-boundary-decay}. Making $\delta>0$ smaller if necessary, we may, by Lemma~\ref{boundary-decay-ros-oton-serra}, also assume that
\begin{equation}
  \label{eq:final-claim-boundary-decay-extended}
  |\phi_s(x)| \le \eps \qquad \text{for $s \in [s_1,\frac{1}{4}]$, $\;x \in \Omega^{\delta}$.}
\end{equation}
Combining \eqref{eq:final-claim-boundary-decay} and \eqref{eq:final-claim-boundary-decay-extended}, we conclude that
$$
  |\phi_s(x)| \le \eps \qquad \text{for $s \in (0,\frac{1}{4}]$, $\;x \in \Omega^{\delta}$.}
$$
The proof of Theorem~\ref{thm-unif-boundary-decay-section} is thus finished.
\end{proof}

\section{Completion of the proofs}
\label{sec:proof-main-theorems}
In this section, we complete the proofs of Theorem~\ref{main-theorem-introduction}, Corollary~\ref{first-cor-intro} and Corollary~\ref{cor-regularity-loglap}.

We start with the

\begin{proof}[Proof of Theorem~\ref{main-theorem-introduction}] 
Part (i) is proved in Theorem~\ref{lambda-limit-lower-bound}. Part (iii) is proved in Theorem~\ref{thm-sec-local-equicontinuity}. Moreover, the first claim in Part (ii), the boundedness of the set $M:= \{\phi_{k,s}\::\: s \in (0,\frac{1}{4}]\}$ in $L^\infty(\Omega)$, has been proved in Theorem~\ref{uniform-l-infty-section}.
Combining this fact with the relative compactness of the set $M$ in $C(K)$ for every compact subset $K \subset \Omega$,   it follows from Theorem~\ref{thm-unif-boundary-decay-section} together with the  Kolmogorov--Riesz compactness theorem   that
$M$ is relative compact in $L^p(\Omega)$ for every $p \in [1,\infty)$, this completes the claim in Part (ii).


Part (iv) of Theorem~\ref{main-theorem-introduction} follows by combining Part (iii) with Theorem~\ref{thm-unif-boundary-decay-section}.
In fact, since $\Omega$ satisfies an exterior sphere condition and $\phi_{k,s}\equiv 0$ on $\R^N\setminus \Omega$ for all $k\in \N$,  it follows from Lemma \ref{boundary-decay-ros-oton-serra} that for any $k\in \N$ and $s\in (0,\frac{1}{4}]$, $\phi_{k,s}$ has a unique extension to a continuous function on $\overline{\Omega}$ which we still denote by  $\phi_{k,s}$. Note that $\phi_{k,s}=0$ on $\partial\Omega$ and   $\phi_{k,s} \in C_0(\R^N)$. Therefore, it follows from  Theorem~\ref{thm-unif-boundary-decay-section} that $M$ is bounded in $C_0(\R^N)$. To prove that $M$ is equicontinuous in $C_0(\R^N)$ it is enough to show that $\phi_{k,s}$  is equicontinuous at all points in $\partial\Omega$. But this follows from Theorem \ref{thm-unif-boundary-decay-section} and  Part(iii) of Theorem \ref{main-theorem-introduction}. Therefore  M is bounded and  equicontinuous in $C_0(\R^N)$ and  by the  Ascoli-Arzela Theorem,  $M$ is relative compact in $C_0(\R^N)$.

To prove Part (v), let $(s_n)_n \subset (0,\frac{1}{4}]$ be a sequence of numbers with $s_n \to 0^+$. By Theorem~\ref{lambda-limit-lower-bound}, we may pass to a subsequence with the property that
\begin{equation}
  \label{eq:l-2-convergence-last-section}
\phi_{k,s_n} \to \phi_{k,L} \qquad \text{in $L^2(\Omega)$ as $n \to \infty$.}  
\end{equation}
Due to the relative compactness of the set $M$ in $L^p(\Omega)$ already proved in Part (ii), we also have $L^p$-convergence in \eqref{eq:l-2-convergence-last-section} for $1 \le p < \infty$, and the locally uniform convergence follows from Part (iii). Moreover, in the case where $\Omega$ satisfies an exterior sphere condition, the convergence in $C_0(\Omega)$ follows from the relative compactness in the space $C_0(\Omega)$ stated in Part (iv). 
\end{proof}

Next we complete the

\begin{proof}[Proof of Corollary~\ref{first-cor-intro}] 
For the particular  case   $1\le p \le  2$, the convergent in  \eqref{eq:convergence-first-eigenfunction-cor} follows directly from  \cite[Theorem 1.5]{RefT1} combined with the H\"{o}lder inequality. But using the relative compactness of the set $M$ in $L^p(\Omega)$ proved in Part (ii)  of Theorem~\ref{main-theorem-introduction} and the uniqueness of $\phi_{1,s}$, the $L^p$-convergence in   \eqref{eq:convergence-first-eigenfunction-cor}    for $1 \le p < \infty$ and   the locally uniform convergence in $\Omega$ also  follows by Part (iv) of Theorem~\ref{main-theorem-introduction}. The additional assertion follows from the additional assertion in Theorem~\ref{main-theorem-introduction}(v).
\end{proof}

\begin{proof}[Proof of Corollary~\ref{cor-regularity-loglap}] 
  Let $(s_n)_n \subset (0,\frac{1}{4}]$ be a sequence of numbers with $s_n \to 0^+$. Moreover, for every $n \in \N$, let $\phi_{k,s_n}$, $k \in \N$ denote $L^2$-orthonormal Dirichlet eigenfunctions of $(-\Delta)^{s_n}$ on $\Omega$ corresponding to the eigenvalues $\lambda_{k,s_n}$. Passing to a subsequence, we may assume, by Theorem~\ref{main-theorem-introduction}, that
  \begin{equation}
    \label{eq:proof-of-reg-loglap}
\frac{\lambda_{k,s_n}  -1}{s_n} \to \lambda_{k,L} \qquad \text{and}\qquad 
\phi_{k,s_n} \to \phi_{k,L} \quad \text{in $L^2(\Omega)$}
  \end{equation}
  as $n \to \infty$, where, for every $k \in \N$, $\phi_{k,L}$ is a Dirichlet eigenfunction of $\loglap$ on $\Omega$ corresponding to the eigenvalue $\lambda_{k,L}$. Parts (iii) and (v) of Theorem~\ref{main-theorem-introduction} then imply that
  $$
  \phi_{k,L} \in L^\infty(\Omega) \cap C_{loc}(\Omega) \qquad \text{for every $k \in \N$.}
  $$
  Moreover, it follows that $\phi_{k,L} \in C_0(\Omega)$ in the case where $\Omega$ satisfies an exterior sphere condition. 

  Finally, the $L^2$-convergence in \eqref{eq:proof-of-reg-loglap} implies that the sequence of functions $\phi_{k,L}$, $k \in \N$ is $L^2$-orthonormal. It then follows that every Dirichlet eigenfunction of $\loglap$ on $\Omega$ can be written as a finite linear combination of the functions $\phi_{k,L}$, and therefore it has the same regularity properties as the functions $\phi_{k,L}$, $k \in \N$.
\end{proof}

\appendix
\section{On equivalent H{\"o}lder estimates}

Here we recall that by the notion of H\"older-Zygmund spaces we have for $\tau\in(0,1)$ and $r>0$ that $v\in C^{\tau}(\R^N)\cap L^{\infty}(\R^N)$ if and only if
	\begin{equation}\label{holder-zygmund-remark:eq1}
	\|v\|_{L^{\infty}(\R^N)}+\sup_{\substack{x,h\in \R^N \\ h\neq 0}}\frac{|2v(x+h)-v(x+2h)-v(x)|}{|h|^{\tau}}=:v_{\tau}<\infty.
	\end{equation}
	Indeed, if $v\in C^{\tau}(B_r(0))\cap L^{\infty}(\R^N)$, then clearly \eqref{holder-zygmund-remark:eq1} holds. To see the reverse implication, first note that we have $\|v\|_{L^{\infty}(\R^N)}\leq v_{\tau}<\infty$ by \eqref{holder-zygmund-remark:eq1}. Next, let $x\in \R^N$ and we claim that there is $C_2$ independent of $x$ such that
	\[
	\sup_{\substack{y\in \R^N\\ h\neq 0}}\, \frac{|v(x+h)-v(x)|}{|h|^\tau}\leq C_2.
	\]
	Since $v(x+h)-v(x)=(v-c)(x+h)-(v-c)(x)$ for all constants $c\in \R$, we may assume $v(x)=0$. Next, let $h\in \R^N$, then
	\[
	|2v(x+2^{k}h)-v(x+2^{k+1}h)|=|2v(x+2^{k}h)-v(x+2^{k+1}h)-v(x)|\leq v_{\tau}2^{k\tau}|h|^{\tau}\quad\text{for $k\in \N_0$.}
	\]
	But then, for $n\in \N$ and since $\tau<1$,
	\begin{align*}
	|2^nv(x+h)-v(x+2^{n}h)|&\leq \sum_{k=0}^{n-1}2^{n-1-k}|2v(x+2^{k}h)-v(x+2^{k+1}h)|\\
	&\leq C|h|^{\tau}\sum_{k=0}^{n-1}2^{n-1-k+k\tau}\leq v_{\tau}2^n|h|^{\tau}\sum_{k=0}^{\infty}2^{-(1-\tau)k}=\frac{v_{\tau}2^{n}}{1-2^{\tau-1}}|h|^{\tau}.
	\end{align*}
	Hence, for all $n\in \N$,
	\begin{align*}
	|v(x+h)-v(x)|&=|v(x+h)|\leq 2^{-n}|2^nv(x+h)-v(x+2^nh)|+2^{-n}|v(x+2^nh)|\\
	&\leq \frac{v_{\tau}}{1-2^{\tau-1}}|h|^{\tau}+2^{-n}v_{\tau}
	\end{align*}
	and, for $n\to \infty$, we have $|v(x+h)-v(x)|\leq \frac{v_{\tau}}{1-2^{\tau-1}}|h|^{\tau}$ so that  $v\in C^{\tau}(\R^N)\cap L^{\infty}(\R^N)$.

\bibliographystyle{amsplain}

\end{document}